\documentclass[11pt]{article}
\usepackage{ifthen}
\newboolean{arxiv}
\setboolean{arxiv}{true}

\usepackage{ifthen}
\usepackage{mathtools}
\mathtoolsset{showonlyrefs}

\ifthenelse{\boolean{arxiv}}{
\usepackage{arxiv_def}
}
{
\usepackage{times}
\usepackage{colt_def}
}

\usepackage[toc,page]{appendix}
\usepackage{xr}
\usepackage{mathtools}
\usepackage{yfonts}

\DeclareSymbolFont{Greekletters}{OT1}{iwona}{m}{n}
\DeclareSymbolFont{greekletters}{OML}{iwona}{m}{it}
\DeclareMathSymbol{\salpha}{\mathord}{greekletters}{"0B}
\DeclareMathSymbol{\sbeta}{\mathord}{greekletters}{"0C}
\DeclareMathSymbol{\sgamma}{\mathord}{greekletters}{"0D}
\DeclareMathSymbol{\sOmega}{\mathord}{Greekletters}{"0A}
\DeclareMathSymbol{\smu}{\mathord}{greekletters}{"16}
\DeclareMathSymbol{\svarepsilon}{\mathord}{greekletters}{"22}
\DeclareMathSymbol{\svarrho}{\mathord}{greekletters}{"25}
\DeclareMathSymbol{\svarphi}{\mathord}{greekletters}{"27}

\let\De\cuD

\makeatletter
\newcommand{\vast}{\bBigg@{3}}
\newcommand{\Vast}{\bBigg@{4}}
\makeatother

\begin{document}

\title{Variational Formulas for the Spectrum of\\ Block Wishart Matrices}
\author{
Andrea Montanari\thanks{Department of Statistics and Department of Mathematics, Stanford University} 
	\and 
	Basil Saeed\thanks{Department of Electrical Engineering, Stanford University}
}

\maketitle

\begin{abstract}
We analyze the asymptotics of a block-Wishart random matrix ensemble of the type $\bW_k = (\bX^* \otimes \bI_k)\bT (\bX\otimes\bI_k)$
for $\bX \in\C^{n\times p}$ with i.i.d. rows
satisfying a suitable concentration-of-measure property,
and $\bT := \Diag(\bT_i)_{i\in[n]}$ a block diagonal matrix with self-adjoint blocks $\bT_i\in \C^{k\times k}$,
under the proportional asymptotics $n/p\to\alpha$
with $k$ fixed. These matrices play a prominent
role in the analysis of $k$-index models in high-dimensional statistics.
By studying the matrix Stieltjes transform of this random matrix model 
and its inverse ($K$-transform), we derive variational formulas for two functionals of the asymptotic spectral density of $\bW_k$: the left (equivalently right) edge of its support, and its logarithmic potential.
\end{abstract}

\section{Introduction}
Let $\bX_n \equiv\bX \in \C^{n\times p}$ be a sequence of random matrices indexed by $n \in \N$, 
with i.i.d. rows $\bx_i \in\C^p, i\in[n]$, and for each $n$, consider matrices $\{\bT^{(n)}_1,\dots,\bT^{(n)}_n\}$ 
that are deterministic or independent of $\bX$,
with  $\bT_i\in\sfM_{k}(\C)_{\textrm{sa}}$ for each $i\in[n]$.
(Here, we denote by $\sfM_{k}(\C)_{\textrm{sa}}$ the set of self-adjoint $k\times k$ matrix with elements in $\C$.)
In order to lighten the notation, we will drop the superscript from $\bT^{(n)}_i$,

We will be interested in the asymptotics of Wishart-type matrices defined by
\begin{equation}
\label{eq:W_k_def}
    \bW_k := \frac1n(\bX^* \otimes \bI_{k})\Diag(\bT_{i})_{i\in[n]} (\bX\otimes \bI_k)
    \equiv \frac1n\sum_{i=1}^n (\bI_k\otimes \bx_i) \bT_i(\bI_k\otimes \bx_i^*)
    \in\C^{pk \times pk}
\end{equation}
in the proportional regime 
\begin{equation}
    \frac{n}{p} \to \alpha\in(0,\infty) \quad\quad n,p\to\infty,
\end{equation}
when
\begin{equation}
    \frac1n \sum_{i=1}^n \delta_{\bT_i} \Rightarrow {\nu} \in\cuP_0(\sa{k}{\C}),
\end{equation} 
here and throughout,
$\cuP(\cX)$ denotes the collection of probability measures on the Polish space $\cX$, while
$\cuP_0(\cX)$ denotes those with compact support.
We will assume throughout the $\bx_i$ to be centered (i.e. $\E[\bx_i]=\bzero$)
and isotropic (i.e. $\E[\bx_i\bx_i^{*}]=\bI_p$).

Matrices of this type appear in several applications, for example, in communications $\bW_k$ is the Gram matrix for MIMO channels describing the effect of the channel on the 
signal~\cite{couillet2011random}. In high-dimensional statistics and machine learning,
the matrix $\bW_k$ is the Hessian of a general 
empirical risk $\hat R_n(\bTheta) := n^{-1}\sum_{i\le n} \ell(\bTheta^{\sT}\bx_i)$ 
where $\bTheta \in\R^{p\times k}$ is a matrix of vector parameters \cite{asgari2025local,arous2025local}.
In this setting, parameters are typically estimated by minimizing $\hR_n(\bTheta)$, and it is therefore
of interest to study the properties of the Hessian at a point $\bTheta$:
\begin{align}
    \nabla^2\hR(\bTheta) = \frac{1}{n}\sum_{i=1}^n
    (\bI_k\otimes \bx_i) \nabla^2
    \ell(\bTheta^{\sT}\bx_i)(\bI_k\otimes \bx_i^*)\, .
\end{align}
This is the same as Eq.~\eqref{eq:W_k_def}
where we identify $\bT_i=   \nabla^2\ell(\bTheta^{\sT}\bx_i)$.

Two remarks are in order with regard to the application to
empirical risk minimization. \emph{First,}
unlike in our setting  $\nabla^2\ell(\bTheta^{\sT}\bx_i)$
is not independent of $\bx_i$. However this dependence can often be accounted for by a low rank deformation
\cite{mondelli2019fundamental,lu2020phase}.
\emph{Second,} when the cost function $\ell$ is non-convex, the matrices $\bT_i$ are not necessarily positive semidefinite (PSD). Moving beyond the PSD case will be a 
main motivation for the present note.

For $k=1$, the ensemble $\bW_1$ is
the classical Wishart-type matrix deformed multiplicatively by the diagonal entries $t_i$, i.e.,
\begin{equation}
\label{eq:wishart_classical}
    \bW_1 = \frac1n \sum_{i=1}^n  t_i\bx_i \bx_i^* \in \R^{p\times p}
\end{equation}
with $\bx_i \in\R^p$ the rows of $\bX$. Under certain tail decay conditions on $\bx_i$, the asymptotic spectral density in this setting was characterized  by the seminal work~\cite{marvcenko1967distribution,silverstein1995empirical} (with several refinements since then, e.g.~\cite{hachem2007deterministic})
as the measure $\mu_\up{\alpha,\nu}$ whose Stieltjes transform $s_\up{\alpha,\nu}(z)$ uniquely solves
\begin{equation}
\label{eq:ST_FP_EQ_1}
   K_{\up{\nu,\alpha}}(s) = z \quad\quad z \in \bbH_+, \quad\quad K_\up{\nu,\alpha}(s) := \int \frac{t}{1 + s t} \nu(\de t) - \frac1{\alpha s} 
\end{equation}
where $\bbH_+ := \{z\in \C : \Im(z) > 0\}$. Following \cite{lehner1999computing},
we will refer to $K_{\up{\nu,\alpha}}(s)$ (and its generalizations below)
as the \emph{$K$-transform}\footnote{The term ``$K$-transform'' is typically reserved for the inverse of the \emph{Cauchy transform}, but we will use it here for the inverse of the Stieltjes transform.}.  

For general $k>1$, the asymptotics of $\bW_k$ are given by the following proposition, which is a direct generalization of 
Eq.~\eqref{eq:ST_FP_EQ_1} and follows 
from  earlier work (see Remark \ref{rmk:PropoProof}
below).
We note here that this asymptotic characterization holds 
under various other assumptions on the matrix $\bX$.
For concreteness, we state it here under the assumption that the rows of $\bX$ enjoy 
a Hanson--Wright type concentration inequality, but this is not the only setting.

\begin{proposition}[Standard asymptotic characterization]
\label{prop:asymptotic_FP}
Assume the rows $\bx_i\in\C^{p}$ are i.i.d., centered (i.e., $\E[\bx_i] =\bzero$), and isotropic
(i.e., $\E[\bx_i\bx_i^*] = \bI_p$). Assume further that there exists 
$\sfK \in (0,\infty)$
such that, for each $i\in[n]$, all deterministic $\bA\in \C^{p\times p}$, and all $t>0$,
\begin{equation}
\label{eq:HW_condition}
   \P\Big( \big|\bx_i^*\bA \bx_i - \Tr(\bA)\big| \ge t \Big)
   \le
   2 \exp\bigg\{-c \min\bigg(\frac{t^2}{\sfK^4 \|\bA\|_\Fnorm^2},\, 
   \frac{t}{\sfK^2 \|\bA\|_\op}\bigg)\bigg\}\, ,
\end{equation}
for some universal constant $c>0$, where $\|\bA\|_\Fnorm$ and $\|\bA\|_\op$ denote 
the Frobenius and operator norms of $\bA$, respectively.
Fix integer $k\ge 1$, $\alpha \in (0,\infty)$ and
let $\nu\in\cuP_0(\sa{k}{\C})$.
Define
\begin{equation}
\label{eq:K_transform}
    \bK_\up{\alpha,\nu}(\bS) := \int  \bT(\bI + \bS\bT)^{-1} \nu(\de \bT) - \frac1\alpha\bS^{-1},\quad\quad \bK_\up{\alpha,\nu}:\C^{k\times k} \to\C^{k\times k}.
\end{equation}

For any $z \in\bbH_+$, there exists a unique 
solution 
$\bS_\up{\nu,\alpha}(z)$
to
\begin{equation}
\label{eq:ST_FP}
    \bK_\up{\alpha,\nu}(\bS) = z\bI,\quad\quad \bS\in\bbH_+^k := \{\bB\in \C^{k\times k}: \Im(\bB) \succ\bzero\}.
\end{equation}
Furthermore,
    \begin{equation}
        s_\up{\nu,\alpha}(z) := 
         \frac{\alpha}{k} \Tr(\bS_\up{\alpha,\nu}(z))\quad\quad\textrm{for}\quad z\in\bbH_+
    \end{equation}
is the Stieltjes transform of a probability measure 
$\mu_\up{\alpha,\nu}\in\cuP(\R)$ such that, almost surely
\begin{equation}
    \hmu_{\bW_k} \Rightarrow \mu_\up{\alpha,\nu} \quad\quad \textrm{almost surely, as} \quad\quad n\to\infty,\;\frac{n}{p} \to\alpha,
\end{equation}
where $\hmu_{\bW_k}$ is the empirical spectral distribution of $\bW_k$ and $\Rightarrow$ denotes weak convergence of probability measures.

Finally, if $\dist(\{\bT_i:i\le n\},\supp(\nu))\to 0$ (with $\dist$ denoting
Hausdorff distance between sets in $(\reals^k,\|\,\cdot\,\|$), then,
almost surely,
\begin{align}
\dist\big(\supp(\hmu_{\bW_k}),\supp (\mu_\up{\alpha,\nu})\big) \to 0\, .
\end{align}
\end{proposition}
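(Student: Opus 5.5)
We would prove Proposition~\ref{prop:asymptotic_FP} by the standard resolvent/Schur-complement route, organized around the $pk\times pk$ resolvent $\bR(z) := (\bW_k - z\bI)^{-1}$ and its $k\times k$ partial trace $\bG_n(z) := \frac1p(\bI_k\otimes \Tr_p)\bR(z)$ (block trace over the $p$ index). The empirical Stieltjes transform is $\frac1{pk}\Tr\bR = \frac1k\Tr\bG_n$. The plan is to show that $\bG_n(z)$ approximately solves a matrix fixed-point equation, and that this equation is equivalent to $\bK(\bS)=z\bI$ with $\bS = \bG_n/\alpha$ up to the normalization; this normalization is to be checked against the factor $\alpha/k$ in $s_\up{\nu,\alpha}$.

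\emph{Step 1 (leave-one-out).} Write $\bW_k = \bW^{(i)} + \frac1n(\bI_k\otimes\bx_i)\bT_i(\bI_k\otimes\bx_i^*)$, with $\bR^{(i)}$ the resolvent of $\bW^{(i)}$. By Woodbury,
\begin{equation}
\bR(\bI_k\otimes\bx_i) = \bR^{(i)}(\bI_k\otimes \bx_i)\big(\bI + \tfrac1n\bT_i (\bI_k\otimes\bx_i^*)\bR^{(i)}(\bI_k\otimes \bx_i)\big)^{-1}.
\end{equation}
The $k\times k$ quadratic form $\frac1n(\bI_k\otimes\bx_i^*)\bR^{(i)}(\bI_k\otimes\bx_i)$ has $(a,b)$ entry $\frac1n\bx_i^*\bR^{(i)}_{ab}\bx_i$, and $\bx_i$ is independent of $\bR^{(i)}$. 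Using \eqref{eq:HW_condition} with $\|\bR^{(i)}_{ab}\|_\op\le 1/\Im z$ and $\|\bR^{(i)}_{ab}\|_\Fnorm\le\sqrt p/\Im z$, this form concentrates around $\frac pn \bG^{(i)}_n$ with fluctuations $O(n^{-1/2})$, with exponential tails. Rank-$k$ perturbation bounds then give $\|\bG^{(i)}_n-\bG_n\|\le C/(p\Im z)$. Next we use the identity $\bI + z\bR = \bR\bW_k$, take partial traces, and substitute the Woodbury expression. This yields
\begin{equation}
\bI + z\bG_n \approx \frac{n}{p}\cdot\frac1n\sum_i \tfrac{p}{n}\bG_n\,\bT_i(\bI+\tfrac pn\bT_i\bG_n)^{-1}\cdot(\text{order } 1),
\end{equation}
up to errors that vanish a.s. Together with $\frac1n\sum_i\delta_{\bT_i}\Rightarrow\nu$ on a compact set, where the integrand is continuous because $\bG_n\in\bbH^k_+$ keeps $\bI+\bS\bT$ invertible, and after rescaling $\bS=\bG_n/\alpha$ and multiplying by $\bS^{-1}$, this becomes $\bK_\up{\alpha,\nu}(\bS)\approx z\bI$. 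Almost-sure control follows from McDiarmid/martingale concentration of $\bG_n$ (each row changes it by $O(k/(p\Im z))$) and Borel--Cantelli.

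\emph{Step 2 (existence and uniqueness).} This is the main obstacle, since $\bT$ is not PSD and the $k=1$ monotonicity arguments are unavailable. For existence, any subsequential limit of $\bG_n$ (compact, since $\|\bG_n\|\le 1/\Im z$) solves the equation. One must also show $\Im\bS\succ0$, which we get from $\Im\bG_n\succeq c(z)\bI$ via a lower bound $\Im \bR\succeq \Im z\,\bR\bR^*$ combined with $\|\bW_k\|_\op$ bounded. For uniqueness we follow Helton--Rashidi Far--Speicher: rewrite the equation as $\bS=\Phi_z(\bS)$ with
\begin{equation}
\Phi_z(\bS)=\Big(\alpha\big(-z\bI+\textstyle\int \bT(\bI+\bS\bT)^{-1}\nu(\de\bT)\big)\Big)^{-1}.
\end{equation}
We then show that $\Phi_z$ maps $\bbH^k_+$ into a subset bounded away from its boundary (using $\Im z>0$) and is holomorphic. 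By the Earle--Hamilton theorem it is a strict contraction in the Carath\'eodory metric, so the fixed point is unique. Convergence of the whole sequence then follows, and uniqueness, analyticity, and $z s(z)\to-1$ as $z\to i\infty$ identify $s$ as the Stieltjes transform of a probability measure. Vague convergence of $\hmu_{\bW_k}$ is thus established for each $z$, and a countable dense set together with tightness ($\|\bW_k\|$ bounded a.s.) gives weak convergence.

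\emph{Step 3 (no outliers).} For the support statement we would invoke the Bai--Silverstein ``no eigenvalues outside the support'' argument, extended to the block setting as in \cite{couillet2011random}. Concretely, we would show $\E\bG_n - \bS(z)\alpha = O(n^{-2})$ uniformly for $z$ at distance $\gtrsim n^{-1/2}$ from $\R$ near a gap, which requires the sharper Hanson--Wright-based expansion. We anticipate this step also needs Hausdorff closeness of $\{\bT_i\}$ to $\supp(\nu)$, to rule out spikes generated by isolated $\bT_i$. Alternatively, we would cite the existing results, as Remark~\ref{rmk:PropoProof} suggests.
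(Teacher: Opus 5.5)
\paragraph{The gap: Earle--Hamilton is applied to a map that does not satisfy its hypothesis.}
The genuine gap is the uniqueness step in Step~2. You assert that $\Phi_z$ maps $\bbH_+^k$ into a bounded set at positive distance from $\partial\bbH_+^k$. This is false.

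Take any invertible atom $\bT_0$ of $\nu$ and set $\bS=-\bT_0^{-1}+i\epsilon\bI\in\bbH_+^k$. Then $\bT_0(\bI+\bS\bT_0)^{-1}=-i\epsilon^{-1}\bI$. Hence the imaginary part of $\int\bT(\bI+\bS\bT)^{-1}\nu(\de\bT)-z\bI$ is $\preceq-\nu(\{\bT_0\})\epsilon^{-1}\bI$, and so $\|\Phi_z(\bS)\|_\op\le\epsilon/(\alpha\,\nu(\{\bT_0\}))\to 0$. Concretely, for $k=1$, $\nu=\delta_1$, $\alpha=1$, $z=i$, one gets $\Phi_i(-1+i\epsilon)=i\epsilon/(1+\epsilon)\to 0$.

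The Helton--Rashidi Far--Speicher argument you are importing works because there the map is $\bW\mapsto(\bZ-\eta(\bW))^{-1}$ with $\eta$ \emph{linear} and bounded. Then $\|\eta(\bW)\|\le\|\eta\|\,\|\bW\|$ holds uniformly up to the boundary, so sets $\{\Im\bW\succ\epsilon\bI,\ \|\bW\|<R\}$ are mapped strictly inside themselves. Here the only a priori bound on $\bT(\bI+\bS\bT)^{-1}$ is $\|(\Im\bS)^{-1}\|_\op$, and that bound is sharp, so the argument breaks down. Note that the example above is PSD: the obstruction is not the indefiniteness of $\bT$ that you flagged. As written, uniqueness is not established, and with it neither the convergence of the whole sequence nor the identification of the limit.

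\paragraph{Two ways to close it, and smaller fixes.}
One repair is to apply Earle--Hamilton to $\Phi_z\circ\Phi_z$ instead.
\begin{itemize}
\item For $\bS_1=\Phi_z(\bS)$ you have $\|\bS_1\|_\op\le(\alpha\Im z)^{-1}$ and $\Im\bS_1\succeq\alpha\,\Im z\,\bS_1\bS_1^*$.
\item Suppose $\|(\bI+\bS_1\bT)\bu\|=\delta$ for a unit vector $\bu$, and set $\bw=\bT\bu$. Since $\bu^*\bT\bu$ is real, $\bw^*\Im(\bS_1)\bw\le\|\bT\|_\op\delta$.
\item It follows that $\|\bS_1\bw\|\le\|\bS_1^*\bw\|+2\|\Im(\bS_1)\bw\|=O(\sqrt{\delta})$. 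This contradicts $\|\bS_1\bw\|\ge 1-\delta$ for small $\delta$.
\item So $\bT(\bI+\bS_1\bT)^{-1}$ is bounded uniformly over the compact $\supp(\nu)$, and $\Phi_z\circ\Phi_z(\bbH_+^k)$ is a compact subset of $\bbH_+^k$.
\end{itemize}

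The paper (Appendix~\ref{app:proofs_FP}) avoids invariant domains altogether. For two solutions it writes $(\alpha\,\id-\cuL_{\bS,\tilde\bS})[\bS-\tilde\bS]=\bzero$. It then bounds $\|\cuL_{\bS,\tilde\bS}^q\|$ via Cauchy--Schwarz by a constant times $\Gamma_\bS(\Im\bS)^{q/2}\Gamma_{\tilde\bS}(\Im\tilde\bS)^{q/2}$. Finally, it reads off $\Gamma_\bS(\Im\bS)<\alpha$ from the imaginary part of the fixed-point equation \emph{at the solution} (Lemma~\ref{lemma:Gamma_at_FP}).

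Otherwise your plan follows the standard routes of Remark~\ref{rmk:PropoProof}, with three smaller fixes:
\begin{itemize}
\item The Step~1 display has the factors in the wrong order. In terms of the rescaled partial trace $\bS$ it should read $\bI+\alpha z\bS\approx\alpha\int\bS(\bI+\bT\bS)^{-1}\bT\,\nu(\de\bT)$, which is exactly $\bK(\bS)=z\bI$.
\item Bounding $\|\bW_k\|_\op$ requires truncating the $o(n)$ indices with large $\|\bT_i\|_\op$. Weak convergence of the empirical measure does not control $\max_i\|\bT_i\|_\op$.
\item The $O(n^{-2})$ bias claim in Step~3 cannot hold against the $n$-independent limit. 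That step should just be the citation.
\end{itemize}
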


\begin{remark}\label{rmk:PropoProof}
The proof of Proposition \ref{prop:asymptotic_FP} follows from several standard results 
in the literature. One approach is to use the linearization trick (see~\cite{far2006spectra,speicher2020lecture}), noting that matrix $\bW_k$
takes the form (for a suitable ordering of rows and columns)
\begin{align}
\bW_k = \left[\begin{matrix}
\bU_{11} &\bU_{12} &\cdots & \bU_{1k}\\
\bU_{21} & \bU_{22} & \cdots & \bU_{2k}\\
\vdots & \vdots & \cdots & \vdots\\
\bU_{k1} &\bU_{k2} &\cdots & \bU_{kk}\\
\end{matrix}\right]\, ,
\;\;\;\;\;\; \bU_{ij}:=\frac{1}{n}\bX^{*}\hat{\bT}_{ij}\bX\, ,
\end{align}
where $\hat{\bT}_{ij}:=\Diag((\bT_{\ell})_{ij}:\ell\le n)$. 
Hence, for any integer $m$, there is a non-commutative polynomial  $Q_m$ such that
$\Tr(\bW^{m}_k) = \Tr(Q_m(\{\bU_{ij}\}))$, and the asymptotics of the latter
for Gaussian matrices
follows from \cite[Theorem 5.4.10]{anderson2010introduction} (for weak convergence
of the empirical spectral distribution)
and \cite[Theorem 5.5.1]{anderson2010introduction} 
(for convergence of the support).
For non-Gaussian  matrices, the same asymptotics holds by a Lindeberg argument
\cite{chatterjee2006generalization,montanari2022universality}.
An alternative approach is to use the Matrix Dyson equation of
\cite{alt2017local}.

For a complete proof of the exact statement of Proposition~\ref{prop:asymptotic_FP} in the Gaussian setting (in particular, existence and uniquenness of
the solution of Eq.~\eqref{eq:ST_FP}, see the Appendix of \cite{asgari2025local}.
It is not too hard to see that their proof is directly generalizable to the current setting.
\end{remark}

\section{Computing functionals of $\mu_\up{\alpha,\nu}$}
While the above proposition characterizes the asymptotic spectral measure $\mu_\up{\alpha,\nu}$ of $\bW_k$, it does so in an implicit way and  studying functionals of this measure from this equation can be  challenging.
We will focus here on two particular functionals of the measure
$\mu_\up{\alpha,\nu}$: the \emph{edges of its support},
\begin{equation}
    \zeta_{-,\up{\alpha,\nu}} := \inf\supp(\mu_{\up{\alpha,\nu}}),\quad\quad
    \zeta_{+,\up{\alpha,\nu}} := \sup\supp(\mu_{\up{\alpha,\nu}}),
\end{equation}
and its \emph{logarithmic potential}
\begin{equation}
    \Phi_\up{\alpha,\nu}(u) := \int \log |u -\zeta| \mu_\up{\alpha,\nu}(\de \zeta),\quad\quad u \in \R \setminus [\zeta_-,\zeta_+].
\end{equation}
Both of these quantities are useful in applications.
The logarithmic potential coincides with the limit of 
$\log\det(\bW_k)$, which 
in wireless communications,
determines \emph{channel capacity} (i.e., the mutual information) of a MIMO channel modeled by $\bW_k$.

In high-dimensional statistics, $\zeta_{-}$ can be used
to compute the lower edge of the asymptotic
spectrum of the Hessian $\nabla^2\hat R_n(\bTheta)$. 
Hence, it provides information about the stability of landscape at a particular point $\bTheta$. The log potential also plays an important role in the Kac-Rice approach
to the analysis of the empirical risk minimization problem 
\cite{maillard2020landscape,asgari2025local}.

In the rest of this section, we will present the main ideas for the functional $\zeta_-$ 
to be definite, and state the characterization of the log-potential at the end of the section.
In what follows, we will often omit the subscript $\up{\alpha,\nu}$ from all quantities whenever there is no risk of confusion; for example, we write $\mu$ for $\mu_\up{\alpha,\nu}$.

In the classical setting $k=1$, \cite{silverstein1995analysis} 
already gives a useful characterization of $\zeta_-$.
Namely, letting $t_{\min} := \inf \supp(\nu) \wedge 0$, and recalling the 
definitions  of~\eqref{eq:ST_FP_EQ_1}, we have
\begin{equation}
    \zeta_{-}= \sup_{s\in \cM_1} K(s)\quad\quad\textrm{with}\quad\quad
    \cM_1 := \left\{s \in (0, -t_{\min}^{-1}),\; \int \left(\frac{s t }{1 + st} \right)^{2} \nu(\de t)< \frac1\alpha\right\}.\label{eq:OnedCase}
\end{equation}
(Here, it is understood that 
$-1/t_{\min}=\infty$ whenever $t_{\min}=0$.)
Namely, if $x < \inf \supp(\mu),$ then $s(x) \in (0,-t_{\min}^{-1})$
where $t_{\min} := \inf \supp(\nu) \wedge 0.$ 
This formula can be read off of Eq.~\eqref{eq:ST_FP_EQ_1}  as follows:
By definition, $s(z)$ has an analytic continuation on a the maximal interval $(-\infty,\zeta_{-})$ (i.e. no analytic continuation exists on $(-\infty,\zeta_{-}+\eps)$
for any $\eps>0$). Further $s$ maps $(-\infty,x)$ to $(0,s(x))$,
$s(x)>0$ for every $x<\zeta_-$.
Since $K(s)$ is its analytic inverse on $(0,s(x))$, then 
we must have $\zeta_{-} = K(s_0)$  when 
$s_0 >0$ is the smallest value at which $K$ fails to be invertible. 
This coincides with the smallest $s$ at which $K'(s)\le 0$, which 
yields Eq.~\eqref{eq:OnedCase}, by noting that
\begin{align}
s^2K'(s) =  \frac{1}{\alpha}-\int \left(\frac{s t }{1 + st} \right)^{2} \nu(\de t)\, .
\end{align}
In other words,
\begin{align}
\label{eq:pre_var_zmin_1}
 s_\up{\alpha,\nu}( (-\infty, \zeta_{-})) = \cM_1 ,
\end{align}
which is equivalent to Eq.~\eqref{eq:OnedCase}.

A naive attempt to generalize this idea to
$k>1$ yields
\begin{equation}
\label{eq:pre_var_zmin_k}
   \zeta_{-}  = \sup \big\{z : \exists \bS\in \cM_k \;\;\textrm{s.t.}\;\; \bK(\bS)=z\bI_k  \big\}
\end{equation}
with $\bK$ the $K$-transform defined in Proposition~\ref{prop:asymptotic_FP},
and the set $\cM_k$ defined as the (bijective) image of $(-\infty,\zeta_{\min})$
under $\bS_\up{\alpha,\nu}(x)$.
However, this description of the set $\cM_k$ is not explicit, and the resulting 
characterization is circular.

In order to obtain a more useful generalization of Eq.~\eqref{eq:OnedCase}, we will consider  
the matrix valued $K$-transform of Eq.~\eqref{eq:K_transform}
$\bK:\sfM_k(\C)\to\sfM_k(\C)$, and in the equation $\bK(\bS) = \bZ$ for general $\bZ\in\C^k$.
As we will see below, the inverse of $\bS\mapsto\bK_{\up{\alpha,\nu}}(\bS)$,
call it $\hat\bS_{\up{\alpha,\nu}}(\bZ)$ has the interpretation of providing the limit
\begin{align}
    \hat\bS_{\up{\alpha,\nu}}(\bZ)= \lim_{n,p\to\infty}\frac{1}{n}\Big(\bI_k\otimes \Tr\Big)\Big((\bW_k-\bZ\otimes \bI_p)^{-1}\Big)\,
\end{align}
for more general $\bZ\in\C^k$ than multiples of the identity.
The details of this approach will be clarified in Section~\ref{sec:proofs_main}.

We can now state our main results. We denote by
$\cuD \bK(\bS)[\cdot]:\sfM_{k}(\C)\to\sfM_{k}(\C)$ the 
directional derivative of $\bK$ at $\bS$,
which can be computed explicitly as
\begin{equation}
   \De_\bS\bK(\bS)[\bDelta] =
   \frac1\alpha \bS^{-1}\bDelta\bS^{-1} - 
   \int \bT(\bI+\bS\bT)^{-1}\bDelta (\bI+\bT\bS)^{-1}\bT\, \nu(\de\bT) \quad\quad
\textrm{for}\quad
 \bDelta\in\sa{k}{\C}.
\end{equation}
Notice that  $\cuD \bK(\bS)[\cdot]$ is a self-adjoint operator (with respect to the standard 
scalar product on $\sfM_{k}(\C)$). Hence it 
makes sense to ask whether $\De_\bS\bK(\bS)[\cdot ]$ is positive definite 
(in formulas $\De_\bS\bK(\bS)[\cdot ]\succ 0$) or positive semidefinite 
($\De_\bS\bK(\bS)[\cdot ]\succeq 0$),
which amounts respectively to  $\<\bDelta,\De_\bS\bK(\bS)[\bDelta ]\> > 0$ for all $\bDelta\neq \bzero$,
or  $\<\bDelta,\De_\bS\bK(\bS)[\bDelta ]\> \ge 0$ for all $\bDelta\neq \bzero$.
\begin{theorem}[Variational formula for left edge]
\label{thm:edge}
Fix $k \ge 1$ integer, $\alpha \in (0,\infty)$, and $\nu\in\cuP_0(\sa{k}{\C})$.
Let $\mu_{\alpha,\nu}$ 
be as in Proposition~\ref{prop:asymptotic_FP}.
Then we have
\begin{equation}
 \inf \supp(\mu_\up{\alpha,\nu})= \sup_{\bS \in \cP_{-}} \lambda_{\min}\left(\bK_{(\alpha,\nu)}(\bS)\right). 
\end{equation}
where $\cP_-$ is given by
   \begin{equation}
   \label{eq:P_def}
  \cP_- \equiv \cP_-(\alpha,\nu) := \{\bS\succ\bzero : \bS^{-1}+ \bT\succ\bzero \;\; \forall\,\bT\in\supp(\nu),\;\; 
  \cuD_\bS \bK(\bS)[\,\cdot\,]\succ 0\; 
  \}.
   \end{equation}
   Furthermore, the set $\cP_-$ is equivalent to
   \begin{equation}
   \label{eq:P_def_with_B}
  \cP_- = \{\bS\succ\bzero : \bS^{-1}+ \bT\succ\bzero \;\; \forall\,\bT\in\supp(\nu),\;\;\textrm{there exists  }\bB\succ\bzero \textrm{ s.t.}  \;
  \cuD_\bS \bK(\bS)[\bB]\succ \bzero\; 
  \}.
   \end{equation}
\end{theorem}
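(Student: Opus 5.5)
We argue through the matrix-argument resolvent. For $\bZ\in\sa{k}{\C}$ with $\bZ\prec \zeta_-\bI$ (more precisely $\lambda_{\max}(\bZ)<\zeta_-$), set
\begin{align}
\hat\bS(\bZ):=\lim \frac1n(\bI_k\otimes\Tr)\big((\bW_k-\bZ\otimes\bI_p)^{-1}\big).
\end{align}
We first show that $\hat\bS(\bZ)$ exists, is self-adjoint and positive definite, and satisfies $\bK(\hat\bS(\bZ))=\bZ$. The plan is to run the proof of Proposition~\ref{prop:asymptotic_FP} with $z\bI$ replaced by a general $\bZ\in\bbH_+^k$. Taking the limit $\Im\bZ\downarrow 0$ is then legitimate because the support convergence in Proposition~\ref{prop:asymptotic_FP} keeps the resolvent bounded when $\lambda_{\max}(\bZ)<\zeta_-$.

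On the region $\cR:=\{\bZ:\lambda_{\max}(\bZ)<\zeta_-\}$ the map $\hat\bS$ is real-analytic and operator-monotone increasing: its derivative $\bDelta\mapsto \lim\frac1n(\bI\otimes\Tr)(\bG(\bDelta\otimes\bI)\bG)$, with $\bG$ the resolvent, is a completely positive map with $\bG\succ0$. Differentiating $\bK(\hat\bS(\bZ))=\bZ$ shows that $\cuD\bK(\hat\bS)$ is the inverse of this positive map, so $\cuD\bK(\hat\bS)\succ0$. Positivity of $\bS^{-1}+\bT$ on $\supp(\nu)$ should follow by a continuity argument along $\bZ=x\bI$, $x\to-\infty$, where $\hat\bS\approx -\bZ^{-1}/\alpha$ is small. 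If $\bS^{-1}+\bT$ became singular along the path, $\bK$ would blow up, contradicting boundedness of $\hat\bS$ and of $\bZ$. Hence $\hat\bS(\cR)\subseteq\cP_-$. This gives the inequality $\le$: for $x<\zeta_-$, $\bS=\hat\bS(x\bI)\in\cP_-$ and $\lambda_{\min}(\bK(\bS))=x$.

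For the reverse inequality $\ge$, fix $\bS_1\in\cP_-$ and let $x_1=\lambda_{\min}(\bK(\bS_1))$. Our goal is to show $x_1\le\zeta_-$.

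\begin{enumerate}
\item We argue that $\cP_-$ is connected, ideally star-shaped or convex in a suitable parametrization. For this we would use the formulation \eqref{eq:P_def_with_B} and study the segment from a small multiple $\epsilon\bI$, which lies in $\cP_-$, to $\bS_1$.
\item On $\cP_-$ the map $\bK$ is a local diffeomorphism because $\cuD\bK\succ0$. Moreover, $\bK$ is monotone along the path: $\bK$ is the gradient of a strictly convex functional, namely $F(\bS)=\int\Tr\log(\bI+\bS\bT)\nu(\de\bT)-\alpha^{-1}\log\det\bS$ up to sign, so $\bK$ is injective on convex subsets of $\cP_-$.
\item Continue $\hat\bS$ from $\bK(\epsilon\bI)$, which has very negative eigenvalues, along $\bK$ of the path. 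By uniqueness of analytic continuation and of the solution in Proposition~\ref{prop:asymptotic_FP}, the branch $\bK^{-1}$ defined through $\cP_-$ coincides with the limiting resolvent on all of $\bK(\text{path})$.
\item Use monotonicity to compare. Since $\bK(\bS_1)\succeq x_1\bI$ and $\hat\bS$ is monotone, the resolvent at $x\bI$ stays regular for all $x<x_1$. Hence $s(x)=\frac{\alpha}{k}\Tr\hat\bS(x\bI)$ extends analytically and real to $(-\infty,x_1)$, and $\mu$ puts no mass there, giving $\zeta_-\ge x_1$.
\end{enumerate}

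The main obstacle is step 3: showing that the analytic inverse built inside $\cP_-$ does not leave the physical branch, and that the region $\{\bZ\preceq \bK(\bS_1)\}$ is reached. We would first try to exploit the convexity of $F$ together with the observation that $\cP_-$ is exactly the domain on which $F$ is strictly convex and finite. That would make $\bK=\nabla F$ a bijection from the relevant convex component of $\cP_-$ onto its image, which could then be matched with $\hat\bS$ by the uniqueness of solutions in $\bbH_+^k$, perturbing $\bZ\mapsto\bZ+i\eta\bI$.

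Finally, the equivalence of \eqref{eq:P_def} and \eqref{eq:P_def_with_B}:
\begin{itemize}
\item For $\Rightarrow$, take $\bB=(\cuD\bK)^{-1}[\bI]$. This is positive definite, since the inverse is the positive-preserving map identified above.
\item For $\Leftarrow$, write $\cuD\bK(\bS)[\bDelta]=\alpha^{-1}\bS^{-1}\bDelta\bS^{-1}-\Psi(\bDelta)$ with $\Psi$ completely positive. Conjugating gives $\bDelta\mapsto\bDelta-\Phi(\bDelta)$ with $\Phi$ completely positive. Existence of $\bB\succ0$ with $\bB-\Phi(\bB)\succ0$ gives spectral radius $\rho(\Phi)<1$ by Perron–Frobenius/Krein–Rutman for positive maps.
\item Finally, because $\Phi$ is of the form $\sum A_j\bDelta A_j^*$ and is thus self-adjoint in Hilbert–Schmidt, its spectral radius equals its top eigenvalue, so $\rho(\Phi)<1$ yields $\cuD\bK\succ0$.
\end{itemize}
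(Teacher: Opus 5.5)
\textbf{The reverse inequality is not proved.} Your Step 3 is the whole content of the theorem, and what you sketch does not supply it.

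You define the physical branch $\hat\bS$ only on $\mathcal{R}=\{\bZ:\lambda_{\max}(\bZ)<\zeta_-\}$. For $k>1$, however, $\bK(\bS_1)$ need not lie in $\mathcal{R}$: it may have one very negative eigenvalue and another above $\zeta_-$. The relevant domain is $\cU_-=\{\bU: w_k-\bU\otimes\id_\cA\succ 0\}$, and what must be shown is exactly $\bK(\bS_1)\in\cU_-$.

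Saying that the branch of $\bK^{-1}$ through $\cP_-$ ``coincides with the limiting resolvent on all of $\bK(\text{path})$'' presupposes that the path never leaves the region where the resolvent is analytic, which is circular. Your Step 4 (monotonicity) only transfers regularity from $\bK(\bS_1)$ down to $x\bI$; it cannot create it. The convexity idea does not close the gap either: $\cP_-$ is not known to be convex, local strict convexity of $F$ on a non-convex set does not make $\bK=\nabla F$ injective, and Step 1 (connectedness) is left unproved.

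The missing ingredient is an analytic non-continuation argument at $\partial\cU_-$. The paper proceeds in four steps:
\begin{enumerate}
\item It shows $\hat\bS(\cU_-)\subseteq\cP_-$ with $\bK\circ\hat\bS=\mathrm{id}$.
\item It proves $\bK$ globally injective on $\cP_-$ via a Neumann-series bound on $\bDelta\mapsto\alpha\E[\bZ(\bI+\bZ)^{-1}\bDelta\tilde\bZ(\bI+\tilde\bZ)^{-1}]$, with no convexity needed.
\item It proves $\cP_-$ connected via the path $\bS^{-1}\mapsto\bS^{-1}+t\bB_0$, with $\bB_0$ a rescaled version of the certificate $\bB$.
\item If $\cU_-\subsetneq\bK(\cP_-)$, the connected open set $\bK(\cP_-)$ meets $\partial\cU_-$ at some $\bU_0$. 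There $\bK^{-1}$ would continue $\hat\bS$ analytically with self-adjoint values, so the Stieltjes transform of $w_k-\bU_0\otimes\id_\cA$ would continue across $0\in\spec(w_k-\bU_0\otimes\id_\cA)$, a contradiction.
\end{enumerate}
Your $\bZ+i\eta\bI$ remark can be turned into a variant. On $\cP_-$ one has $(\cuD\bK)^{-1}[\bI]\succ\bzero$ by the Neumann series, so the local inverse of $\bK$ at $\bK(\bS)+i\eta\bI$ lies in $\bbH_+^k$ and equals $\hat\bS$ by uniqueness. Stieltjes inversion then removes a neighborhood of $0$ from $\spec(w_k-\bK(\bS)\otimes\id_\cA)$, and continuity of the spectrum along a path in $\cP_-$ starting near $\bzero$ gives positivity. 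This variant still needs the connectedness you did not prove.

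\textbf{The forward inequality also has a hole.} The claim $\hat\bS(x\bI)\in\cD_0$ rests on ``if $\bS^{-1}+\bT$ became singular, $\bK$ would blow up.'' This holds only when the offending $\bT$ is an atom of $\nu$. For diffuse $\nu$ it is false: already for $k=1$, if $\nu$ has a density vanishing like $(t-t_{\min})^2$ at $t_{\min}<0$, then $K(s)$ and $K'(s)$ stay bounded as $s\uparrow -1/t_{\min}$. The paper runs exactly your argument for discrete $\nu$. It then approximates a general $\nu$ by discrete measures with atoms in $\supp(\nu)$, so that $\|w_k^{(\eps)}-w_k\|=O(\eps)$, which gives $\hat\bS(\bU)\in\overline{\cD_0}$. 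Finally it excludes $\partial\cD_0$ because $\hat\bS(\cU_-)$ is open.

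Once $\hat\bS(\bU)\in\cD_0$ is known, your derivation of $\cuD\bK(\hat\bS(\bU))\succ0$ by inverting the positive definite operator $\cuD\hat\bS(\bU)$ is correct, and more direct than the paper's imaginary-part argument. In the ``$\Rightarrow$'' half of the equivalence, though, you use positivity of $(\cuD\bK)^{-1}$, which you established only on the image of $\hat\bS$. For a general $\bS\in\cP_-$ you need $\rho(\cuT_\bS)=\lambda_{\max}(\cuT_\bS)<1$, which comes from Krein--Rutman rather than self-adjointness, so that $\sum_p\cuT_\bS^p$ converges.
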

A related characterization for the left and right edges $\zeta_{\pm}$ appeared recently in~\cite{parmaksiz2025computing} applicable when $\bT_i$ are positive semi-definite, 
albeit stated differently. The derivation of \cite{parmaksiz2025computing}  
is based on a technique introduced in~\cite{lehner1999computing} 
to compute the operator norm of free operators. 
The approach of \cite{lehner1999computing}  realizes the edge 
as the operator norm of a free operator which can be explicitly decomposed as the sum of 
operators on the Fock Space. By carefully bounding the norm of these basis operators, they obtain a tight upper bound on the operator norm\footnote{After 
finishing this manuscript, Ramon van Handel communicated to us that the
technique of \cite{parmaksiz2025computing}  can also be generalized to indefinite
case, yielding a characterization analogous (but identical) to 
Theorem \ref{thm:edge}}.  We point out that the setting of
\cite{parmaksiz2025computing} is more general in other ways, and in particular accommodating for non-isotropic\footnote{We believe our approach can be generalized to the non-isotropic case but we refrain from doing so at the moment.} $\bx_i$'s.

The next theorem gives the formula for logarithmic potential $\Phi(u)$.
\begin{theorem}[Variational formula for the logarithmic potential]
\label{thm:log}
Fix  $k \ge 1$ integer,
$\alpha \in (0,\infty)$, and $\nu\in\cuP_0(\sa{k}{\C})$.
For $u<\zeta_{-,\up{\alpha,\nu}}$,
set
\begin{equation}
    g_\up{\alpha,\nu}(\bS;u) := -\alpha u \Tr(\bS) + \alpha \int \log \det(\bI + \bT \bS) \nu(\de\bT) - \log|\det(\bS)| - k(\log \alpha +1 ).
\end{equation}
then we have (for $\cP_-$ defined as in Eq~\eqref{eq:P_def})
    \begin{equation}
  \Phi(u) := \int \log |\zeta - u| \, \mu_{\up{\alpha,\nu}}(\de \zeta) 
    =  \frac1k\inf_{\bS\in\cP_{-}} g_\up{\alpha,\nu}(\bS;u)\, .
    \end{equation}
\end{theorem}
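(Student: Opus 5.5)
We plan to prove Theorem~\ref{thm:log} by first handling the case $u<\zeta_-$ through the real curve $x\mapsto \bS(x):=\bS_\up{\alpha,\nu}(x)$, $x\in(-\infty,\zeta_-)$. This curve exists by analytic continuation of the solution of Eq.~\eqref{eq:ST_FP}, and it lies in $\cP_-$; we take the latter fact from the proof of Theorem~\ref{thm:edge}.

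\emph{Stationarity.} The gradient of $g(\,\cdot\,;u)$ on self-adjoint $\bS$ is
\begin{equation}
\nabla_\bS g(\bS;u)=\alpha\Big(-u\bI+\int \bT(\bI+\bS\bT)^{-1}\nu(\de\bT)-\tfrac1\alpha\bS^{-1}\Big)=\alpha\big(\bK(\bS)-u\bI\big).
\end{equation}
Hence the critical points of $g$ are exactly the solutions of $\bK(\bS)=u\bI$, and $\bS(u)$ is one of them. Differentiating once more gives the Hessian $\alpha\,\cuD_\bS\bK(\bS)[\cdot]$. This is positive definite on $\cP_-$ by the definition in Eq.~\eqref{eq:P_def}, so $g(\cdot;u)$ is locally strictly convex on $\cP_-$.

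\emph{Value at the critical point.} We want $\frac1k g(\bS(u);u)=\Phi(u)$.
\begin{itemize}
\item Both sides tend to $-\infty$ consistently as $u\to-\infty$: we have $\bS(u)\approx -\frac{1}{\alpha u}\bI$, giving $\Phi(u)=\log|u|+o(1)$. The constant $-k(\log\alpha+1)$ is designed to make $\frac1k g-\log|u|\to 0$.
\item The derivatives also match. By the envelope theorem, $\frac{\de}{\de u}\frac1k g(\bS(u);u)=-\frac\alpha k\Tr\bS(u)=-s(u)$. Meanwhile $\Phi'(u)=\int(u-\zeta)^{-1}\mu(\de\zeta)=-s(u)$ for $u<\zeta_-$.
\end{itemize}
Integrating from $-\infty$ then gives equality.

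\emph{Global minimality over $\cP_-$.} We must show $g(\bS;u)\ge g(\bS(u);u)$ for every $\bS\in\cP_-$. We expect this to be the main obstacle, because $\cP_-$ need not be convex and $g$ is only locally convex there. Our first attempt would exploit the variational edge formula: for $\bS\in\cP_-$ with $\lambda_{\min}(\bK(\bS))>u$, Theorem~\ref{thm:edge}'s proof should show that $\cP_-$ is a union of fibres over $\bZ=\bK(\bS)$ on which $\hat\bS(\bZ)$ is the unique preimage, i.e.\ $\bK$ is a diffeomorphism from $\cP_-$ onto its image. We would then rewrite $g(\bS;u)$ as $G(\bZ)$ with $\bZ=\bK(\bS)$, where $G(\bZ)=\lim \frac1n\log|\det(\bW_k-\bZ\otimes\bI_p)|$ up to the linear term $\alpha\Tr(\bS(\bZ-u\bI))$. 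This is a Legendre-type duality: $g(\bS;u)=\sup$-type expression minus $\alpha\langle \bS,\bZ-u\bI\rangle$.

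Concavity of $\bZ\mapsto \log\det$ on the region where $\bW_k-\bZ\otimes\bI\succ0$ would give $G(\bZ)+\alpha\langle\hat\bS(\bZ),u\bI-\bZ\rangle\ge G(u\bI)$, which is the claimed inequality. If this route fails, the fallback is a path argument. Take the segment $\bZ_t=(1-t)u\bI+t\bK(\bS)$, pulled back through $\hat\bS$. Along it, $\frac{\de}{\de t}g$ equals $\alpha\langle\bK(\bS_t)-u\bI,\dot\bS_t\rangle$. Using $\dot\bS_t=(\cuD\bK)^{-1}[\dot\bZ_t]$ and positivity of $\cuD\bK$, this is $\ge0$, which yields monotonicity of $g$ along the path and hence the infimum.

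Finally, the extension to $u>\zeta_+$ is not part of the statement, so we stop here.
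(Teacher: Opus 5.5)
Your fallback argument is essentially the paper's proof: the paper likewise identifies $g(\hat\bS(u\bI);u)=k\Phi(u)$ by differentiating in $u$ and matching as $u\to-\infty$, and then shows that $g(\hat\bS(\cdot);u)$ is monotone along the segment in $\cU_-$ joining $\bK(\bS)$ and $u\bI$, using $\nabla_\bS g=\alpha(\bK(\bS)-u\bI)$ and positivity of $\cuD\hat\bS$. You get that positivity from $(\cuD_\bS\bK)^{-1}\succ 0$, the paper from the resolvent formula, and your Legendre route is the same computation read as concavity of $\log\det$ (though the inner product there should be $\alpha\langle\hat\bS(\bZ),\bZ-u\bI\rangle$, not $\alpha\langle\hat\bS(\bZ),u\bI-\bZ\rangle$).

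What you should make explicit is why the segment can be pulled back to a path in $\cP_-$ that ends at $\bS$. This needs the full statement $\bK(\cP_-)=\cU_-$ with $\bK$ injective on $\cP_-$ (Theorem~\ref{thm:K_transform_real}, not merely a diffeomorphism onto some image), together with convexity of $\cU_-$, which is immediate because the condition $w_k-\bU\otimes\id_\cA\succ 0$ is affine in $\bU$; with these in hand, the restriction $\lambda_{\min}(\bK(\bS))>u$ is unnecessary.
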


We remark that since $\bT$ is not assumed to be positive semi-definite, the above two theorems directly lead to a formula for $\zeta_+$, and for $\Phi(u)$ for $u> \zeta_+$, respectively. We state the exact form in Remarks~\ref{remark:zeta_+} and~\ref{remark:log_+} below.

\section{Proofs of Theorem \ref{thm:edge} and \ref{thm:log}}
\label{sec:proofs_main}
In this section, we prove Theorems~\ref{thm:edge} and~\ref{thm:log}
by leveraging properties of the matrix Stieltjes and $K$-transforms.
We defer the proof of the key technical fact, Theorem 
\ref{thm:K_transform_real}, to the next section.

We begin with some background on free probability. Most of these preliminaries are taken from~\cite{nica2006lectures} and~\cite{speicher2020lecture}.

\paragraph{Free probability preliminaries.}
Let $(\cA,\tau)$ be a $C^\star$-probability space (i.e., it is both tracial and faithful), with elements $m,t_{i,j} \in \cA$, $i,j\in[k]$
satisfying the following conditions. First,
$m$ is a \emph{free-Poisson} element with rate $\alpha$, i.e., 
for any integer $d>0$,
\begin{equation}
\tau(x^d) = \int \zeta^d\, \mu_{\MP,\alpha}(\de \zeta)
\end{equation}
with $\mu_{\MP,\alpha}$ denoting the Marchenko-Pastur law with aspect ratio $\alpha$.
Second, $t_{i,j}$ for $i,j\in[k]$ satisfy, for any pairs $Q = \{(a_1,b_1),\dots,(a_N,b_N)\}$, $a_i,b_i \in [k],$
\begin{equation}
    \tau\left(\prod_{(i,j) \in Q} t_{i,j}\right) = \int \prod_{(i,j)\in Q} (\bT)_{ij} \;  \nu(\de \bT).
\end{equation}
Define
\begin{align}
    w_k := ( m^{1/2} t_{i,j} m^{1/2})_{i,j \in[k]} \in \sa{k}{\cA}.\label{eq:wk-def}
\end{align}
Here, we use $\sa{k}{\cA}$ to denote the set $k$ by $k$ self-adjoint matrices with elements in the $C^*$-algebra $\cA$.
Since $\nu\in\cuP(\sa{k}{\C}),$ it is easy to see that $w_k$ will indeed be self-adjoint. The operator $w_k$ can be seen as the \emph{free-analogue} or \emph{free deterministic equivalent} of $\bW_k$ from Eq.~\eqref{eq:W_k_def} in the following sense: for any non-commutative polynomial $f$ of $N>0$ elements, and pairs 
$Q = \{(a_1,b_1),\dots,(a_N,b_N)\}$, $a_i,b_i \in [k],$
\begin{equation}
    \label{eq:free_determinism}
    \lim_{\substack{n\to\infty \\ n/p \to\alpha}}\E\left[\frac1n 
   \Tr f\left( \frac1n \bX^*\hat\bT_{a_1b_1}\bX,\dots,  \frac1n \bX^*\hat\bT_{a_Nb_N}\bX \right)
    \right]
    =
    \frac1\alpha\tau\left(
    f\left( m^{1/2}t_{a_1b_1}m^{1/2},\dots,  m^{1/2}t_{a_Nb_N}m^{1/2}\right)
    \right)
\end{equation}
where $\hat\bT_{a,b}$ is the $n$ by $n$ matrix whose diagonal elements are given by $(\bT_{1})_{a,b}, \dots, (\bT_{n})_{a,b}$.

Given an element $a\in\cA$, its spectrum can be defined as the support of the measure $\mu$ on
$\R$ whose moments 
coincide with the the moments of $a$. We denote this by $\spec_\cA(a).$
The Gelfand-Naimark-Segal construction guarantees the existence of a Hilbert space $\cH$, a $*$-representation $\Pi$ of $\cA$ on $\cH$, and 
and a `tracial' element $\psi\in\cH$ such that for
any $a\in\cA,$  
\begin{equation}
    \tau(a) = \langle \psi, \Pi(a) \psi\rangle_{\cH}.
\end{equation}
The $*$-representation $\Pi$ satisfies $\Pi(a^*) = \Pi(a)^*$,
where the $A^*$ denotes the adjoint of operator $A$. 
In particular, if $a\in\cA$ is self-adjoint, then the spectral theory of bounded operators on $\cH$ defines the spectrum of $\Pi(a)$ which we denote by $\spec_\cH(\Pi(a)).$
Faithfulness of $\tau$ then guarantees the two notions of the spectrum of $a$ coincide.

\begin{proposition}[Proposition 3.15 of~\cite{nica2006lectures}] 
\label{prop:spectra_match}
For $(\cA,\tau)$ as above with $\tau$ faithful,
if $a\in\cA$ is self-adjoint, then
\begin{equation}
    \spec_\cA(a)= \spec_{\cH}(\Pi(a)).
\end{equation}
\end{proposition}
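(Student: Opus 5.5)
We need to prove that for self-adjoint $a$ in a $C^*$-probability space with faithful tracial state, the support of the moment-matching (distribution) measure $\mu_a$ coincides with the spectrum of $\Pi(a)$ on the GNS Hilbert space. Our plan is to first identify $\mu_a$ with the spectral measure of $\Pi(a)$ evaluated at the tracial vector $\psi$, and then use faithfulness to show that this vector "sees" all of the spectrum.

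\emph{Step 1: identify $\mu_a$.} Since $\Pi(a)$ is a bounded self-adjoint operator on $\cH$, the spectral theorem gives a projection-valued measure $E$ supported on $\spec_\cH(\Pi(a))$. Set $\mu_\psi(B):=\langle\psi,E(B)\psi\rangle$. This is a probability measure because $\|\psi\|^2=\tau(1)=1$, and it is compactly supported inside $\spec_\cH(\Pi(a))\subseteq[-\|a\|,\|a\|]$. Its moments are $\langle \psi,\Pi(a)^d\psi\rangle=\tau(a^d)$. A compactly supported measure is determined by its moments, so $\mu_\psi=\mu_a$. In particular $\spec_\cA(a)=\supp\mu_a\subseteq \spec_\cH(\Pi(a))$.

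\emph{Step 2: the reverse inclusion.} This is the main point, and it is where faithfulness enters. Suppose $\lambda\in\spec_\cH(\Pi(a))$ but $\lambda\notin\supp\mu_a$. Then there is an open interval $I\ni\lambda$ with $\mu_a(I)=0$. Choose a continuous $f\ge0$ supported in $I$ with $f(\lambda)=1$.
\begin{itemize}
\item Continuous functional calculus in the $C^*$-algebra $\cA$ gives $f(a)\in\cA$ with $f(a)\ge 0$.
\item Since $\Pi$ is a $*$-homomorphism, it commutes with continuous functional calculus, so $\Pi(f(a))=f(\Pi(a))$.
\item Therefore $\tau(f(a)^2)=\langle\psi,f(\Pi(a))^2\psi\rangle=\int f^2\,\de\mu_a=0$.
\item Faithfulness gives $f(a)=0$, hence $f(\Pi(a))=\Pi(f(a))=0$.
\end{itemize}
On the other hand, the spectral mapping theorem gives $\|f(\Pi(a))\|=\sup_{\spec_\cH(\Pi(a))}|f|\ge f(\lambda)=1$. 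This is a contradiction, so $\spec_\cH(\Pi(a))\subseteq\supp\mu_a$.

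\emph{Where care is needed.} The delicate step is the compatibility $\Pi(f(a))=f(\Pi(a))$. It holds for polynomials directly, and extends to continuous $f$ by Weierstrass approximation on $[-\|a\|,\|a\|]$ together with the contractivity of $*$-homomorphisms.

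One could also skip $\Pi$ in Step 2 and argue within $\cA$, showing that $\spec_\cA(a)$ agrees with the $C^*$-spectrum. Faithfulness of $\tau$ then makes the GNS representation faithful, hence isometric, so spectra are preserved. Either route combines with Step 1 to give the equality.
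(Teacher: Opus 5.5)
Your proof is correct. It is essentially the standard argument behind the Nica--Speicher result that the paper cites without giving a proof of its own: realize the distribution of $a$ as a measure supported in the spectrum, then exclude any spectral point outside $\supp(\mu_a)$ by applying faithfulness of $\tau$ to $f(a)$, where $f$ is a nonnegative bump function vanishing on $\supp(\mu_a)$. The one difference is that you build $\mu_a$ as the spectral measure of $\Pi(a)$ at $\psi$ and run the contradiction on $\cH$, so you land directly on $\spec_\cH(\Pi(a))$ as written; the cited result is phrased for the spectrum of $a$ in the $C^*$-algebra $\cA$, and using it needs the extra standard remark that faithfulness of $\tau$ makes $\Pi$ injective and hence spectrum-preserving.
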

Having this in mind, we will throughout use $\spec(a)$ to denote both $\spec_\cH(\Pi(a))$ and $\spec_\cA(a)$.

Define the product Hilbert space $\cH^k := \cH\times \cdots \times \cH$ ($k$ times Cartesian product).
Equivalently $\cH^k=\C^k\otimes \cH$.
Then note that
$(\bI_k\otimes \Pi)w_k$ can be seen as a element of $\cB(\cH^k).$
For a bounded self-adjoint operator $B$ on $\cH$ or $\cH^k$, we write
$B\succ 0$ if $\spec(B)\subseteq (0,\infty)$. 
For a self-adjoint $a\in \cA$ (or, $a_k\in\sa{k}{\cA}$),
we will use $a\succ\bzero$  ($a_k \succ\bzero$) instead of the more cumbersome 
$\Pi(a)\succ\bzero$ ($(\bI_k\otimes\Pi)a\succ\bzero$, respectively).
Finally, define the matrix Stieltjes transform by
\begin{equation}
\label{eq:free_ST_def}
\hat\bS_\up{\alpha,\nu}(\bZ) := \frac1\alpha (\bI_k \otimes \tau)\Big( 
w_k - \bZ\otimes \id_\cA  \Big)^{-1},\quad\quad
\bZ\in\bbH_+^k,
\end{equation}
where $\bbH_+^k$ was defined in Eq.~\eqref{eq:ST_FP}.
Note that under the conditions of Proposition~\ref{prop:asymptotic_FP},
this quantity is the almost sure limit of the random matrix
\begin{equation}
\label{eq:empirical_ST}
    \hat\bS_n(\bZ) := 
    (\bI_k \otimes \frac1n \Tr) \left( \bW_k - (\bZ \otimes \bI_p)\right)^{-1}
\end{equation}
as $n,p\to\infty$ with $n/p\to\alpha.$ 

\paragraph{The image of $\cU_-$.}
The following theorem gives the inverse of $\hat\bS$, to the `left' and `right' of the spectrum of $w_k$.
This is the key technical result to establish the characterizations of $\zeta_{\pm}$
and $\Phi(u)$
from Theorems~\ref{thm:edge} and~\ref{thm:log}.

\begin{theorem}
\label{thm:K_transform_real}
Fix integer $k \ge 1,$
let $\alpha \in (0,\infty)$, and $\nu\in\cuP_0(\sa{k}{\C})$. Define   
$\cU_-\equiv \cU_{-}(\alpha,\nu)\subseteq \sa{k}{\C} $ via
\begin{equation}
   \cU_{-}(\alpha,\nu) := \{\bU \in \sa{k}{\C} : w_k - \bU\otimes \id_\cA  \succ 0\}.
\end{equation}
Then $\hat\bS_\up{\alpha,\nu}$ has an analytic continuation to $\cU_-$, and it maps $\cU_-$ diffeomorphically to $\cP_-$ given in Eq.~\eqref{eq:P_def} and~\eqref{eq:P_def_with_B}.
Further, $\bK_\up{\alpha,\nu}:\cP_-\to\cU_-$ is its analytic inverse in this domain. 
\end{theorem}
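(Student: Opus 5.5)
We plan to prove Theorem~\ref{thm:K_transform_real} in four steps: derive the fixed-point relation $\bK(\hat\bS(\bZ))=\bZ$ from freeness, continue $\hat\bS$ analytically to $\cU_-$, show $\hat\bS(\cU_-)\subseteq\cP_-$, and finally show that $\cU_-$ is exactly the image of $\cP_-$ under $\bK$.

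\textbf{Step 1 (subordination and the fixed-point relation).} We first derive, for $\bZ\in\bbH_+^k$, the matrix fixed-point equation $\bK_\up{\alpha,\nu}(\hat\bS(\bZ))=\bZ$. We would do this through operator-valued freeness over $\sfM_k(\C)$. The element $w_k$ is unitarily equivalent, up to the zero part, to $(\id_k\otimes m^{1/2})\,t\,(\id_k\otimes m^{1/2})$ with $t=(t_{ij})$. Here $m\otimes\bI_k$ is free from $t$ with amalgamation over $\sfM_k(\C)$, because $m$ is scalar-free from the entries $t_{ij}$. The operator-valued $R$-transform of a free Poisson element, combined with Schur complements, then yields
\[
\hat\bS=\Big(\alpha\int \bT(\bI+\hat\bS\bT)^{-1}\nu(\de\bT)-\alpha\bZ\Big)^{-1},
\]
which is equivalent to $\bK(\hat\bS)=\bZ$. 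Alternatively, we can pass to the limit $n\to\infty$ in Eq.~\eqref{eq:empirical_ST}, using Proposition~\ref{prop:asymptotic_FP}, whose proof handles general $\bZ\in\bbH^k_+$.

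\textbf{Step 2 (analytic continuation to $\cU_-$).} Next we extend $\hat\bS$ to $\cU_-$ by the same resolvent formula. For $\bU\in\cU_-$ the operator $w_k-\bU\otimes\id_\cA$ is invertible and positive, so $\hat\bS(\bU)$ is well defined and real-analytic. It is also analytic in a complex neighbourhood of $\cU_-$, since invertibility is an open condition. Hence $\hat\bS(\bU)=\frac1\alpha(\bI\otimes\tau)(w_k-\bU)^{-1}\succ\bzero$, and it is self-adjoint. The set $\cU_-$ is convex: the map $\bU\mapsto w_k-\bU\otimes\id$ is affine, and the positive operators form a convex cone. Since $\cU_-$ is connected and meets the closure of $\bbH^k_+$-approaches, for example $\bU-\ri\eps\bI$ as $\eps\downarrow 0$, the identity $\bK(\hat\bS(\bU))=\bU$ persists by analytic continuation. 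Some care is needed to approach from the domain where Step 1 holds; we would use matrices $\bU+\ri\eps\bI$ with $\eps\downarrow0$ and continuity of the resolvent.

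\textbf{Step 3 (image lies in $\cP_-$).} We then check the three defining conditions of $\cP_-$ at $\bS=\hat\bS(\bU)$.
\begin{itemize}
\item[(a)] $\bS\succ\bzero$ holds by positivity of the resolvent together with faithfulness of $\tau$.
\item[(b)] For the condition $\bS^{-1}+\bT\succ\bzero$ on $\supp(\nu)$, we would use the Schur-complement representation from Step 1, which writes $\bS^{-1}+\bT$ as a compression of a positive operator obtained by integrating out $m$. If needed, we would argue by continuity along the segment $\bU_s=(1-s)\bU_0+s\bU$ from some $\bU_0\ll 0$. For such $\bU_0$, $\bS$ is small and the condition is clear. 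The condition cannot fail first at an interior point: there $\det(\bI+\bS\bT)=0$ would make $\bK$ blow up, while $\bK(\bS)=\bU$ stays bounded.
\item[(c)] Differentiating $(\bI\otimes\tau)(w_k-\bU)^{-1}$ gives $\De\hat\bS(\bU)[\bDelta]=\frac1\alpha(\bI\otimes\tau)\big(R\,(\bDelta\otimes\id)\,R\big)$ with $R=(w_k-\bU)^{-1}\succ0$. Therefore $\langle\bDelta,\De\hat\bS[\bDelta]\rangle=\frac1\alpha\tau\otimes\Tr\big((R^{1/2}\bDelta R^{1/2})^2\big)>0$ for $\bDelta\neq0$, by faithfulness. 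By the chain rule, $\De\bK(\bS)=(\De\hat\bS(\bU))^{-1}$ is then positive definite.
\end{itemize}
This also shows that $\hat\bS$ is a local diffeomorphism, and it is injective because its derivative is a positive definite symmetric operator on a convex set: $\langle \bU_1-\bU_2,\hat\bS(\bU_1)-\hat\bS(\bU_2)\rangle>0$.

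\textbf{Step 4 (surjectivity onto $\cP_-$), expected to be the main obstacle.} The hardest point is showing that every $\bS\in\cP_-$ arises, that is, $\bK(\cP_-)\subseteq\cU_-$. Our plan is a continuity (open–closed) argument. First we show $\cP_-$ is connected, ideally star-shaped or path-connected through the region of small $\bS$, using the equivalent description~\eqref{eq:P_def_with_B}. With that, $\hat\bS(\cU_-)$ is open in $\cP_-$ by Step 3 and the inverse function theorem. It is relatively closed because, if $\bS_j=\hat\bS(\bU_j)\to\bS\in\cP_-$, then $\bU_j=\bK(\bS_j)\to\bK(\bS)$, and the limit cannot lie on $\partial\cU_-$. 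Near $\partial\cU_-$ the operator $w_k-\bU$ acquires a kernel direction, and we expect $\De\hat\bS$ to blow up in some direction $\bB\succeq0$ there. That blow-up corresponds to $\De\bK(\bS)$ losing definiteness, which contradicts $\bS\in\cP_-$. Making this blow-up rigorous at the edge is the delicate part: it requires a square-root-type lower bound on the resolvent near the spectrum edge. We would first try to derive it from the fixed-point equation itself, by differentiating $\bK(\hat\bS(\bU))=\bU$ along a ray towards the boundary. Finally, the equivalence of \eqref{eq:P_def} and \eqref{eq:P_def_with_B} follows from a Perron–Frobenius-type argument for the positivity-structured map $\bDelta\mapsto\bS^{1/2}\De\bK(\bS)[\bS^{1/2}\bDelta\bS^{1/2}]\bS^{1/2}$, which is $\frac1\alpha\id$ minus a completely positive map $\Psi$. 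Such an operator is positive definite iff the spectral radius of $\Psi$ is below $1/\alpha$, iff there is $\bB\succ0$ with $\Psi(\bB)\prec\bB/\alpha$.
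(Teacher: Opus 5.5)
Your architecture (continuation to $\cU_-$, image in $\cD_0$, image in $\cP_-$, open--closed argument) is sound, and several steps are cleaner than the paper's. The chain-rule identity $\cuD\bK(\hat\bS(\bU))=(\cuD\hat\bS(\bU))^{-1}\succ 0$ replaces the imaginary-part computation of Lemma~\ref{lemma:image_U_in_P}. Monotonicity of $\hat\bS$ on the convex set $\cU_-$ gives injectivity directly. The order-unit (Perron--Frobenius) argument is a valid substitute for Lemma~\ref{lemma:set_characterization}.

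\textbf{Gap in Step 3(b).} The first-exit argument needs $\bK$ to blow up when $\hat\bS(\bU_s)^{-1}+\bT_*$ first becomes singular for some $\bT_*\in\supp(\nu)$, and that only happens if $\nu$ has an atom at $\bT_*$. For general $\nu$ it fails. Already for $k=1$, suppose $\nu$ has density $\propto (t-t_{\min})^{\beta}$ near $t_{\min}<0$ with $\beta>0$. Then $\int t(1+st)^{-1}\nu(\de t)$ stays bounded as $s\uparrow -1/t_{\min}$, and for $\beta>1$ so does its $s$-derivative. Reaching $\partial\cD_0$ with $\bK(\bS)=\bU$ bounded is therefore not contradictory by itself. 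The Schur-complement alternative is too vague to fill this. The paper first proves the claim for discrete $\nu$ with positive weights, which is your argument. It then approximates a general $\nu$ by $\nu^{(\eps)}$ supported on an $\eps$-net of $\supp(\nu)$, coupled so that $\|w_k-w_k^{(\eps)}\|\le C\eps$. This gives $\lambda_{\min}(\hat\bS(\bU)^{-1}+\bT)\ge 0$ in the limit, and openness of $\hat\bS(\cU_-)$ (inverse function theorem) upgrades it to strict positivity. You need this or an equivalent device.

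\textbf{Gap in Step 4.} The closedness mechanism you propose is false in general: $\cuD\hat\bS$ need not blow up at $\partial\cU_-$, and no square-root lower bound holds at every edge. Take the $k=1$ example above with $\beta>1$ and $\alpha$ small enough that $K'>0$ on $(0,-1/t_{\min}]$. Then $\zeta_-=K(-1/t_{\min})$, and the solution $s(x)$ of $K(s)=x$ satisfies $s'(x)\to 1/K'(-1/t_{\min})<\infty$ as $x\uparrow\zeta_-$. Equivalently, $\int(\lambda-\zeta_-)^{-2}\mu(\de\lambda)<\infty$. There the limit lies on $\partial\cD_0$, so this does not contradict the theorem, but blow-up cannot be what excludes $\bU_*\in\partial\cU_-$.

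The missing idea is to use $\bS\in\cP_-$ to continue $\hat\bS$ past $\bU_*=\bK(\bS)$:
\begin{itemize}
\item $\bK$ is holomorphic near $\bS\in\cD_0$ with invertible derivative, so it has a local analytic inverse near $\bU_*$ that maps self-adjoint matrices to self-adjoint matrices.
\item Since $\bS_j\to\bS$ and $\cU_-\cap B(\bU_*,r)$ is convex, a connectedness argument shows this inverse agrees with $\hat\bS$ there.
\item Then $x\mapsto\frac1k\Tr\hat\bS(\bU_*+x\bI)$, which up to the factor $\alpha^{-1}$ is the Stieltjes transform of the spectral measure of $w_k-\bU_*\otimes\id_\cA$, extends real-analytically across $x=0$.
\item By Stieltjes inversion and Proposition~\ref{prop:spectra_match}, this contradicts $0\in\spec(w_k-\bU_*\otimes\id_\cA)$.
\end{itemize}
This is the paper's argument. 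The paper phrases it through global injectivity of $\bK$ on $\cP_-$ (Lemma~\ref{lemma:K_injective}); your sequential formulation needs only local invertibility.

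\textbf{Connectedness of $\cP_-$.} Your open--closed argument needs it, but you only state it as a goal, and star-shapedness in $\bS$ is not evident. The paper works in the coordinates $\bY=\bS^{-1}$. For a certificate $\bB_0$, the ray $\bY+t\bB_0$ stays in the set by operator monotonicity of the inverse, and for large $t$ enters a convex subset.
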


Note that since $\nu$ is not assumed to be supported on positive semi-definite matrices, a similar statement holds regarding the set
\begin{equation}
   \cU_+\equiv \cU_{+}(\alpha,\nu) := \{\bU \in \sa{k}{\C} : w_k - \bU\otimes \id_\cA  \prec 0\},
\end{equation}
where its image under $\hat\bS_\up{\alpha,\nu}$ is replaced by
\begin{equation}
\label{eq:def_P_+}
   \cP_+ := \{\bS\prec\bzero : \bS^{-1}+ \bT\prec\bzero \;\; \forall\,\bT\in\supp(\nu),\;\; \exists \bB \succ\bzero \;\textrm{s.t.}\;
  \cuD_\bS \bK(\bS)[\bB]\succ\bzero\; 
  \}.
\end{equation}

The set $\cU_-$ can be thought of as a generalization of the notion of `left of the support', in particular, for $k=1$, $\cU_- = \{u\in\R : u < \zeta_{-}\}.$  
This analogy will be more apparent in the proof of Theorem~\ref{thm:edge}, which we present next.

\paragraph{Deriving the variational formulas.}
\begin{proof}[Proof of Theorem~\ref{thm:edge}]
Fix $\alpha,\nu$ as in the theorem statement.
Clearly, for any $\bU\in\cU_-,$ we have
$(\bI_k\otimes \Pi)w_k \succ \bU \otimes  \Pi(\id_\cA).$
So for any $\xi = (\xi_1 ,\xi_2 , \dots ,\xi_k)$ with $\|\xi\|_{\cH^{k}} = 1,$
\begin{equation}
\langle\xi,
(\bI_k\otimes \Pi) w_k\;
 \xi\rangle_{\cH^{k}} \ge  \langle \xi, (\bU\otimes\id_\cH) \xi \rangle_{\cH^{ k}} = \sum_{i,j\in[k]} {\inner{\xi_i,\xi_j}}_{\cH} U_{ij} = \Tr( \bC \bU ),
\end{equation}
where we defined the Gram matrix $\bC := (\inner{\xi_i,\xi_j}_{\cH})_{i,j}$. Taking the spectral decomposition of the self-adjoint complex matrix $\bU = \sum_{i=1}^k d_iu_i u_i^*$, we then see that
\begin{equation}
   \Tr(\bC\bU) = \sum_{i=1}^k d_i u_i^* \bC u_i \ge \min_{i} d_i \cdot \Tr(\bC) = \lambda_{\min}(\bU),
\end{equation}
where we used that $\Tr(\bC) = \sum_i \|\xi_i\|^2 = 1.$ Combining the above two displays, taking the infimum over $\xi$  of unit norm, then the supremum over $\bU\in\cU_-$ yields
\begin{equation}
    \zeta_- \ge \sup_{\bU\in\cU_-}\lambda_{\min}(\bU).
\end{equation}
On the other hand, if $x < \zeta_{-}$, then $x\bI_k \in\cU_-.$ So choosing a sequence $x_m < \zeta_-$ so that $|x_m - \zeta_-| \to 0$ as $m\to\infty$ we have
\begin{equation}
   \zeta_- =  \lim_{m\to\infty} \lambda_{\min}(x_m \bI_k) \le \sup_{\bU\in\cU_-} \lambda_{\min}(\bU).
\end{equation}
Since by Theorem~\ref{thm:K_transform_real} we have
   $\cU_-  = \{
    \bU : \bU=\bK(\bS),\; \bS\in\cP_-\},$
    changing variables
    and combining the above two displays gives
   \begin{align}
       \zeta_{-} &= \sup_{\bU \in\cU_-}\lambda_{\min}(\bU)
       = \sup_{\bS\in\cP_-}  \lambda_{\min}\left(\bK(\bS)\right).
   \end{align}
\end{proof}

\begin{remark}[The right edge]
\label{remark:zeta_+}
As remarked previously, a similar formula for the right edge $\zeta_+$ can be directly obtained from Theorem~\ref{thm:edge} by replacing $\bT$ with $-\bT$.
This gives
\begin{equation}
\zeta_+ := \sup \supp(\mu_\up{\alpha,\nu})= \inf_{\bS \in \cP_{+}} \lambda_{\max}\left(\bK_\nu(\bS)\right),
\end{equation}
where $\cP_+$ was defined in~\eqref{eq:def_P_+}.
\end{remark}

We now present the proof of Theorem~\ref{thm:log}.
\begin{proof}[Proof of Theorem~\ref{thm:log}]
Fix $u<\zeta_{-}$ as in the theorem statement.
We suppress the dependence on $\alpha$ and $\nu$ for brevity and write, for example, $\hat\bS$ instead of $\hat\bS_{\up{\alpha,\nu}}$,
$g(\cdot)$ instead of $g_\up{\alpha,\nu}(\cdot)$,
and so on.
By Theorem~\ref{thm:K_transform_real}, we can re-write the variational formula of interest as
\begin{equation}
    \inf_{\bS\in\cP_-} g(\bS;u) = \inf_{\bU\in\cU} g(\hat\bS(\bU);u).
\end{equation}
Fix $\bU_0 \in\cU_-$ arbitrary. We will show that $g(\hat\bS(u\bI); u) \le g(\hat\bS(\bU_0);u)$.
Since $u <\zeta_-$, we have $u\bI\in \cU_-$, and therefore
 $g(\hat\bS(u\bI); u) =\inf_{\bU\in \cU_-} g(\hat\bS(\bU);u)$. This will then conclude the proof, since 
\begin{equation}
    g(\hat\bS(u\bI);u ) = k\int \log | \zeta - u| \mu(\de \zeta);
\end{equation}
this identity can be seen by differentiating both sides and taking $u\to -\infty$ to match the boundary 
conditions.

Define for $\eps\in(0,1],$ 
\begin{equation}
    \bU_\eps := \bU_0 - \eps\bDelta \quad\quad
    \bDelta := \bU_0 - u\bI_k
\end{equation}
and note that we have
\begin{align}
    w_k - \bU_\eps\otimes \id_\cA&= 
    w_k - ((1-\eps)\bU_0 + \eps u \bI_k) \otimes \id_\cA 
    =(1-\eps) \left( w_k - \bU_0\otimes \id_\cA\right) 
    + \eps \big(w_k - u\bI_k\otimes \id_\cA\big)
    \succ\bzero
\end{align}
for all $\eps\in(0,1].$
Now introduce the notation
    $m_\eps := w_k - (\bU_\eps\otimes\id_\cA)$
and compute the derivative.
\begin{align}
\frac{\partial}{\partial \eps}
g(\hat\bS(\bU_\eps);u)
 &\stackrel{(a)}{=}-\cuD_\bU g(\hat\bS(\bU_\eps);u)[\bDelta]\\
   &\stackrel{(b)}{=}
   -\frac1\alpha (\Tr\otimes \tau)
   \bigg(
  \Big( \big[-\alpha u\bI + \alpha \bK(\hat\bS(\bU_\eps))\big] \otimes  \id_\cA\Big)
  m_\eps^{-1}  (\bDelta \otimes \id_\cA) m_\eps^{-1}
   \bigg)\\
   &\stackrel{(c)}{=}
   -(1-\eps)(\Tr\otimes \tau)
   \big(
  \big( \bDelta \otimes  \id\big)
  m_\eps^{-1}  \big( \bDelta \otimes \id\big) m_\eps^{-1}
   \big)\\
   &\stackrel{(d)}{=}
   -(1-\eps)(\Tr\otimes \tau)
   \Big(
  \big(
  m_\eps^{-1/2}  \big( \bDelta\otimes \id\big) m_\eps^{-1/2} \big)^2
   \Big)\\
   &\le 0 
\end{align}
for all $\eps\in(0,1),$
where $(a)$ is by the chain rule, $(b)$ is by direct computation, $(c)$ is by Theorem~\ref{thm:K_transform_real}, whence
\begin{equation}
    \alpha\left(-u\bI + \bK(\hat\bS(\bU_\eps)) \right) = \alpha (\bU_\eps - u\bI) =\alpha (1-\eps) \bDelta,
\end{equation}
and in $(d)$ we used the fact that $\bU_\eps\in\cU$ to conclude that $m_\eps\succeq 0$.
(In fact, the inequality is strict whenever $\bDelta \neq 0$, i.e., $\bU_0\neq u\bI_k$).
Hence,
\begin{equation}
    g(\hat\bS(u \bI_k);u) = 
    g(\hat\bS(\bU_1);u)  \le
    g(\hat\bS(\bU_0);u)
\end{equation}
as desired.
\end{proof}
\begin{remark}[$\Phi$ to the right of the support]
\label{remark:log_+}
Once again, we can directly derive a formula for the logarithmic potential for $u > \zeta_+$  by symmetry,
\begin{equation}
  \int \log |\zeta - u| \mu_{\up{\alpha,\nu}}(\de \zeta) 
    = \frac1k \inf_{\bS\in\cP_{+}} g_\up{\alpha,\nu}(\bS;u).
\end{equation}
\end{remark}

\paragraph{Some useful consequences.}
As corollaries of the formulas above, we note a few direct consequences that are a priori not obvious. 
First we have the following characterization of measures $\mu_\up{\alpha,\nu}$ that are positively supported.
Such a question is for instance relevant in non-convex minimization problems such as minimizing the empirical risk $\hat R_n(\bTheta)$ mentioned in the introduction, since local minima correspond to points where the Hessian is positive semi-definite.
\begin{corollary}
Fix $\alpha\in(0,\infty)$, integer $k\ge1$, $\nu\in\cuP_0(\sa{k}{\C})$.
Then  $\supp(\mu_\up{\alpha,\nu})\subseteq(0,\infty)$ if and only
if there exist $\bS,\bB\succ\bzero$ satisfying all of the following
\begin{enumerate}
    \item 
    $
   \alpha^{-1} \bS^{-1}\bB\bS^{-1} - 
   \int \bT(\bI+\bS\bT)^{-1}\bB (\bI+\bT\bS)^{-1}\bT\, \nu(\de\bT)\succ\bzero$
   \item
$\int \bT(\bI+\bS\bT)^{-1} \nu(\de\bT) - \alpha^{-1} \bS^{-1}\succ\bzero.$
\item $\bS^{-1} +\bT \succ\bzero$ for all $\bT\in\supp(\nu).$
\end{enumerate}
\end{corollary}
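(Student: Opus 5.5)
The plan is to deduce the corollary from Theorem~\ref{thm:edge}, in the form using the set \eqref{eq:P_def_with_B}, together with Theorem~\ref{thm:K_transform_real}. The starting observation is that, since $\mu_\up{\alpha,\nu}$ has compact support, the condition $\supp(\mu_\up{\alpha,\nu})\subseteq(0,\infty)$ holds exactly when $\zeta_->0$. By Theorem~\ref{thm:edge}, $\zeta_->0$ holds exactly when some $\bS\in\cP_-$ satisfies $\lambda_{\min}(\bK(\bS))>0$. For the forward direction, take such an $\bS$: the supremum is strictly positive, so some element of $\cP_-$ attains a positive value. For the reverse direction, any such $\bS$ gives $\zeta_-\ge\lambda_{\min}(\bK(\bS))>0$.

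The remaining step is to unpack $\bS\in\cP_-$ together with $\bK(\bS)\succ\bzero$ into the three listed items.
\begin{itemize}
\item The condition $\lambda_{\min}(\bK(\bS))>0$ is exactly $\bK(\bS)\succ\bzero$. Written out from \eqref{eq:K_transform}, this is item 2.
\item Membership $\bS\in\cP_-$, in the form \eqref{eq:P_def_with_B}, consists of three requirements: $\bS\succ\bzero$; $\bS^{-1}+\bT\succ\bzero$ on $\supp(\nu)$, which is item 3; and the existence of $\bB\succ\bzero$ with $\cuD_\bS\bK(\bS)[\bB]\succ\bzero$.
\item Substituting the explicit directional derivative formula displayed before Theorem~\ref{thm:edge} turns the last requirement into item 1.
\end{itemize}
So the triple of conditions, together with $\bS,\bB\succ\bzero$, is literally equivalent to $\bS\in\cP_-$ and $\bK(\bS)\succ\bzero$.

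The only point needing care is that the equivalence of the two descriptions \eqref{eq:P_def} and \eqref{eq:P_def_with_B} of $\cP_-$ is used, and that is part of Theorem~\ref{thm:edge}. If one prefers a more self-contained route to the forward direction, the argument goes as follows.
\begin{itemize}
\item If $\zeta_->0$, pick $x\in(0,\zeta_-)$. Then $x\bI_k\in\cU_-$.
\item By Theorem~\ref{thm:K_transform_real}, $\bS:=\hat\bS(x\bI_k)\in\cP_-$ and $\bK(\bS)=x\bI_k\succ\bzero$.
\item This produces the required $\bS$, and $\bB$ comes from \eqref{eq:P_def_with_B}.
\end{itemize}
The converse direction is the inequality $\zeta_-\ge\lambda_{\min}(\bK(\bS))$ from Theorem~\ref{thm:edge}, or directly from Theorem~\ref{thm:K_transform_real}: $\bK(\bS)\in\cU_-$, so $w_k\succ\bK(\bS)\otimes\id_\cA\succeq\lambda_{\min}(\bK(\bS))\,\id$.

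I expect no real obstacle beyond this bookkeeping. The one subtlety is ensuring strictness: a positive value must give $\zeta_->0$ rather than $\zeta_-\ge0$. This is handled by the strict inequality $\bK(\bS)\succ\bzero$, which forces $\lambda_{\min}(\bK(\bS))>0$.
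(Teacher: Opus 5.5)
Your proposal is correct and is essentially the argument the paper has in mind. The paper states this corollary without a separate proof, as a direct consequence of Theorem~\ref{thm:edge} with $\cP_-$ in the form \eqref{eq:P_def_with_B}: one uses $\supp(\mu)\subseteq(0,\infty)\iff\zeta_->0$, and then reads items 1--3 off the explicit formulas for $\cuD_\bS\bK(\bS)[\bB]$ and $\bK(\bS)$ and off the constraint defining $\cD_0$, exactly as you do (your optional route through Theorem~\ref{thm:K_transform_real} is also valid but not needed).
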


Another useful consequence is  continuity of $\zeta_{-,\up{\alpha,\nu}}$ at $\nu$ whenever
we restrict to sequences $\nu_j$ whose supports are (approximately and eventually) contained in that of $\nu$.

\begin{corollary}
Fix $\alpha\in(0,\infty)$, integer $k\ge1$, $\nu\in\cuP_0(\sa{k}{\C})$. 
Let $\{\nu_j\}_{j\ge 1}$ be a sequence $\nu_j\in\cuP_0(\sa{k}{\C})$ such that
$\nu_j \stackrel{w}{\Rightarrow} \nu$ as $j\to\infty$, and,
for all $\delta>0$ there exists $j_0(\delta)$ such that
\begin{align}
j\ge j_0(\delta)\;\; \Rightarrow\;\;
\supp(\nu_j) \subseteq\bigcup_{\bT\in\supp(\nu)} \big\{\bX\in \sa{k}{\C}
:\;\; \bX\succeq \bT-\delta\bI\big\}\, .
\end{align}
Then
\begin{equation}
    \lim_{j\to \infty} \zeta_{-,\up{\nu_j,\alpha}} =  \zeta_{-,\up{\nu,\alpha}}.
\end{equation}
\end{corollary}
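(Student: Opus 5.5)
We prove the statement via the variational formula of Theorem~\ref{thm:edge}, $\zeta_{-,\up{\alpha,\nu}}=\sup_{\bS\in\cP_-(\alpha,\nu)}\lambda_{\min}(\bK_\up{\alpha,\nu}(\bS))$, and establish the two inequalities separately. For the \emph{lower bound} $\liminf_j\zeta_{-,\up{\nu_j,\alpha}}\ge\zeta_{-,\up{\nu,\alpha}}$, we show that near-optimal points for $\nu$ are eventually feasible for $\nu_j$ and nearly optimal there.

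\textbf{Lower bound.} Fix $\eps>0$. By Theorem~\ref{thm:edge} we may pick $\bS\in\cP_-(\alpha,\nu)$ with $\lambda_{\min}(\bK_\nu(\bS))\ge\zeta_--\eps$.

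First we check the support condition. Since $\supp(\nu)$ is compact and $\bS^{-1}+\bT\succ\bzero$ on it, there is $\eta>0$ with $\bS^{-1}+\bT\succeq 2\eta\bI$ for all $\bT\in\supp(\nu)$. Take $\delta<\eta$ and $j\ge j_0(\delta)$. Every $\bX\in\supp(\nu_j)$ satisfies $\bX\succeq\bT-\delta\bI$ for some $\bT\in\supp(\nu)$, so $\bS^{-1}+\bX\succeq\eta\bI$. Thus the first condition of $\cP_-(\alpha,\nu_j)$ holds, with a uniform margin.

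Next we use continuity of the integrands. We write $\bT(\bI+\bS\bT)^{-1}=\bS^{-1}-\bS^{-1}(\bS^{-1}+\bT)^{-1}\bS^{-1}$. Hence the integrands of $\bK$ and of $\cuD_\bS\bK(\bS)[\bB]$ are continuous and bounded on the set $\{\bX:\bS^{-1}+\bX\succeq\eta\bI\}$. This set contains $\supp(\nu_j)$ for all large $j$, and $\supp(\nu)$.

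The one subtlety is that this set is not compact: it is unbounded above. So weak convergence alone does not justify passing integrals to the limit unless the integrands are bounded there. Using the identity above, $\bX(\bI+\bS\bX)^{-1}$ stays bounded as $\bX\to+\infty$, since it tends to $\bS^{-1}$. The same holds for the derivative integrand, which equals $\bS^{-1}(\cdot)$ applied through $(\bS^{-1}+\bX)^{-1}$ terms. So the integrands are bounded continuous, and weak convergence suffices. (If needed, we extend them continuously to all of $\sa{k}{\C}$ by a cutoff that agrees with them on the relevant set.)

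Consequently $\bK_{\nu_j}(\bS)\to\bK_\nu(\bS)$, and $\cuD_\bS\bK_{\nu_j}(\bS)[\bB]\to\cuD_\bS\bK_\nu(\bS)[\bB]$ for the witness $\bB\succ\bzero$ from Eq.~\eqref{eq:P_def_with_B}. Strict positivity is an open condition, so $\bS\in\cP_-(\alpha,\nu_j)$ for all large $j$, and $\lambda_{\min}(\bK_{\nu_j}(\bS))\ge\zeta_--2\eps$. Applying Theorem~\ref{thm:edge} for $\nu_j$ gives $\liminf_j\zeta_{-,\up{\nu_j,\alpha}}\ge\zeta_{-,\up{\nu,\alpha}}-2\eps$.

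\textbf{Upper bound.} We use the general fact that weak limits of spectral measures cannot have their left edge jump left. The map $\nu\mapsto\mu_\up{\alpha,\nu}$ is weakly continuous. This follows from Proposition~\ref{prop:asymptotic_FP}: the Stieltjes transforms converge pointwise on $\bbH_+$, which we verify by a stability argument for the fixed point $\bK(\bS)=z\bI$, or alternatively via moment convergence of $w_k$ using the free-probability description. Granting weak continuity, lower semicontinuity of the support under weak convergence gives, for any $x\in\supp(\mu_\nu)$ and open $O\ni x$, that $\mu_{\nu_j}(O)>0$ eventually. Taking $x=\zeta_{-,\nu}$ yields $\limsup_j\zeta_{-,\nu_j}\le\zeta_{-,\nu}$.

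The main obstacle is weak continuity of $\nu\mapsto\mu_\up{\alpha,\nu}$, because the supports of the $\nu_j$ are not uniformly bounded above. Our first approach is moment-free. The map $\nu\mapsto\bK_\nu(\bS)$ is jointly continuous on $\bbH_+^k$ for weakly convergent $\nu_j$, since the integrand $\bT(\bI+\bS\bT)^{-1}$ is bounded uniformly in $\bT$ for $\Im\bS\succ0$. We combine this with uniqueness of the solution in Proposition~\ref{prop:asymptotic_FP} and a normal-families (Montel) argument on $\bS_{\nu_j}(z)$. Any subsequential limit solves the limiting equation, hence equals $\bS_\nu(z)$. Tightness of the $\mu_{\nu_j}$ is checked from the behavior $s(iy)\sim -1/(iy)$ as $y\to\infty$.
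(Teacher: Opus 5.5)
Your proposal is correct and follows the paper's proof. The lower bound takes a near-optimizer $\bS\in\cP_-(\alpha,\nu)$ with its witness $\bB$, uses the support hypothesis to get a uniform margin $\bS^{-1}+\bX\succeq\eta\bI$ on $\supp(\nu_j)$, and uses weak convergence to place $\bS$ in $\cP_-(\alpha,\nu_j)$ eventually; the upper bound comes from $\mu_{\alpha,\nu_j}\Rightarrow\mu_{\alpha,\nu}$, obtained by stability of the fixed-point equation, together with lower semicontinuity of supports. Your boundedness remark via $\bX(\bI+\bS\bX)^{-1}=\bS^{-1}-\bS^{-1}(\bS^{-1}+\bX)^{-1}\bS^{-1}$ makes explicit a step the paper leaves implicit, and, as in the paper, the weak-continuity step is only sketched: the point to nail down is a $j$-uniform lower bound on $\Im\hat\bS_{\alpha,\nu_j}(z\bI)$ (uniform tightness), so that subsequential limits stay in $\bbH_+^k$.
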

We note that in general, when $\nu_j\stackrel{w}{\Rightarrow} \nu$ without 
assumptions about the support $\supp(\nu_j)$, the conclusion of the above corollary 
can fail to hold. As an example, one can consider $\nu_j=a_j\delta_{-M}+\nu_j^+$,
where $\nu_j^+$ is supported on $(0,\infty)$ and $a_j\to 0$, but $M>0$ is sufficiently large.
\begin{proof}
    We will suppress the dependence on $\alpha$ in the notation.
    Fix $\eps >0$ and use Theorem~\ref{thm:edge} to find $\bS_\eps,\bB_\eps \succ\bzero$ and $\delta_\eps>0$
    satisfying 
    \begin{equation}
        \lambda_{\min}(\bK_\up{\nu}(\bS_\eps)) \ge \zeta_{-,\up{\nu}}  - \eps,
\qquad
\cuD_\bS \bK_\up{\nu}(\bS_\eps)[\bB_\eps] \succ \delta_\eps \bI_k,
\qquad
\bS_\eps^{-1} + \bT \succ \delta_\eps \bI_k \quad \forall \bT \in \supp(\nu),
    \end{equation}
Note that we used the assumption that $\nu$ is compactly supported to ensure that $\bS_\eps^{-1} + \bT \succ\bzero$ for all $\bT\in\supp(\nu)$
implies a uniform bound on $\bS_\eps^{-1} + \bT$.

The condition on the supports of $\nu_j$ implies that there exists $J_{\eps}<\infty$ such that 
$\bS_\eps^{-1} + \bT \succ \delta_\eps \bI_k/2$
for all $\bT \in \supp(\nu_j)$ provided $j\ge J_{\eps}$.  The convergence $\nu_j \Rightarrow \nu$ then implies that 
$\cuD_\bS \bK_\up{\nu_j}(\bS_\eps)[\bB_\eps]$ converges to $\cuD_\bS \bK_\up{\nu}(\bS_\eps)[\bB_\eps]$ in the operator norm.
Hence, by eventually increasing $J_{\eps}$, we have that, for all $j \ge J_\eps$,
\begin{equation}  
    \cuD_\bS \bK_\up{\nu_j}(\bS_\eps)[\bB_\eps] \succ \frac{\delta_\eps}{2} \bI_k,
\end{equation}
and therefore we conclude $\bS_\eps \in \cP_{-}(\nu_j)$ for all $j \ge J_\eps$.
Applying Theorem~\ref{thm:edge} to $\nu_j$ for $j \ge J_\eps$ now gives
\begin{equation}
   \zeta_{-,\up{\nu_j}} = \sup_{\bS \in \cP_{-}(\nu_j)} \lambda_{\min}(\bK_\up{\nu_j}(\bS))
   \ge  \lambda_{\min}(\bK_\up{\nu_j}(\bS_\eps)) 
\end{equation}
Taking the limit inferior, and using the fact that $\bK_\up{\nu_j}(\bS_\eps)$ converges in operator norm to
$\bK_\up{\nu}(\bS_\eps)$, we get 
\begin{equation}
   \lim\inf_{j\to\infty}\zeta_{-,\up{\nu_j}} \ge  \lambda_{\min}(\bK_\up{\nu}(\bS_\eps))
 \ge \zeta_{-,\up{\nu}}  - \eps\, .
 \end{equation}
 Letting  $\eps\to 0$, yields the lower bound 
  $\lim\inf_{j\to\infty}\zeta_{-,\up{\nu_j}} \ge \zeta_{-,\up{\nu}}$.
  
  To prove the matching upper bound, we note that the weak convergence of $\nu_j\Rightarrow\nu$
  implies $\lim_{j\to\infty}\hat\bS_{\up{\nu_j}}(z\bI)=\hat\bS_{\up{\nu}}(z\bI)$ for all $\Im(z)>0$,
  by a standard stability argument of the fixed point equation \eqref{eq:ST_FP}
  (see e.g. \cite{asgari2025local}). As a consequence
  $\mu_{(\nu_j)}\Rightarrow\mu_{(\nu)}$, which in turn implies  $\lim\sup_{j\to\infty}\zeta_{-,\up{\nu_j}} \le \zeta_{-,\up{\nu}}$.
\end{proof}

\section{Proof of Theorem~\ref{thm:K_transform_real}}

The remainder of the manuscript is dedicated to the proof of the 
Theorem~\ref{thm:K_transform_real}. It is perhaps useful to recall that 
$\bbH_+^k := \{\bB\in \C^{k\times k}: \Im(\bB) \succ\bzero\}$.

The starting point is to consider a more general form of the equation Eq.~\eqref{eq:ST_FP} for $\bZ\in\bbH_+^k.$ 
The next lemma establishes that the solution of this matrix equation is given by the Stieltjes transform 
$\hat\bS_\up{\alpha,\nu}$ of Eq.~\eqref{eq:free_ST_def}.
%
\begin{lemma}
\label{lemma:inv_on_upper_plane}
Fix integer $k \ge 1,$
let $\alpha \in (0,\infty)$, and $\nu\in\cuP_0(\sa{k}{\C})$.
 For any $\bZ \in\bbH_+^k$, 
 $\hat\bS_\up{\alpha,\nu}(\bZ)$ 
 of Eq.~\eqref{eq:free_ST_def}
 is the unique solution of
 \begin{equation}\label{eq:Matrix-Fixed-Point}
 \bK_{\up{\alpha,\nu}}(\bS) = \bZ,\quad\quad \bS\in\bbH_+^k.
 \end{equation}
\end{lemma}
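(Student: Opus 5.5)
We split the claim into two parts. First we show that $\hat\bS(\bZ)$ solves \eqref{eq:Matrix-Fixed-Point} and lies in $\bbH_+^k$. Then we show uniqueness. The tool for the first part is operator-valued free probability over $\sfM_k(\C)$, applied to the free-Poisson element $m$ and the matrix $t=(t_{ij})$. For $\bZ\in\bbH_+^k$, $\Im(w_k-\bZ\otimes\id)=-\Im\bZ\otimes\id\prec 0$, so the resolvent exists and is bounded. Its imaginary part is positive: writing $R=(w_k-\bZ\otimes\id)^{-1}$, we have $\Im R = R(\Im\bZ\otimes\id)R^*\succ 0$. Applying the faithful positive map $\bI_k\otimes\tau$ gives $\Im\hat\bS(\bZ)\succ\bzero$.

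To derive the fixed-point equation, we do not pass to the noncommutative setting directly. Instead we rewrite $w_k=(\bI_k\otimes m^{1/2})\, t\,(\bI_k\otimes m^{1/2})$ and mirror the finite-$n$ Schur complement (leave-one-out) computation for $\bW_k-\bZ\otimes\bI_p$.

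1. Take the matrix identity $(\bW_k-\bZ\otimes\bI)^{-1}(\bW_k-\bZ\otimes\bI)=\bI$ and apply the partial trace.
2. Expand $\bW_k=\frac1n\sum_i(\bI_k\otimes\bx_i)\bT_i(\bI_k\otimes\bx_i^*)$.
3. Use Sherman--Morrison--Woodbury on each rank-$k$ block $(\bI_k\otimes \bx_i)$.
4. Use the concentration assumption \eqref{eq:HW_condition} to replace $(\bI_k\otimes\bx_i^*)R_{(i)}(\bI_k\otimes\bx_i)$ by $(\bI_k\otimes\Tr)R_{(i)}\approx n\hat\bS_n(\bZ)$.

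This yields
\[
-\bZ\,\hat\bS_n\approx \tfrac1\alpha\bI-\int \bS\bT(\bI+\hat\bS_n\bT)^{-1}\nu(\de\bT),
\]
that is, $\bK(\hat\bS_n)\approx \bZ$. We then pass to the limit using \eqref{eq:free_determinism} (and its resolvent version via a Neumann/polynomial approximation) to transfer this equation to $\hat\bS(\bZ)$. Alternatively, the same equation follows abstractly: the $\sfM_k(\C)$-valued R-transform of the free compound Poisson element $w_k$ is $\int\bT(\bI-\bB\bT)^{-1}\nu(\de\bT)$, and one inverts the operator-valued Cauchy transform. The invertibility of $\bI+\hat\bS\bT$ needed along the way follows since $\hat\bS\in\bbH_+^k$ and $\bT$ is self-adjoint: if $(\bI+\bS\bT)v=0$, then $\bT v=-\bS^{-1}v$, and pairing with $\bT v$ gives a contradiction with $\Im\bS^{-1}\prec0$.

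For uniqueness we follow the standard Matrix Dyson Equation argument of \cite{alt2017local}, as in the appendix of \cite{asgari2025local}. We rewrite \eqref{eq:Matrix-Fixed-Point} as the fixed-point equation $\bS=\Phi_\bZ(\bS):=\frac1\alpha\big(\int\bT(\bI+\bS\bT)^{-1}\nu(\de\bT)-\bZ\big)^{-1}$ on $\bbH_+^k$. We then show that $\Phi_\bZ$ maps $\bbH_+^k$ into a bounded subset strictly inside $\bbH_+^k$ and is a strict contraction in the Carathéodory-type (Earle--Hamilton) metric. This gives uniqueness, and also existence independently of the first part.

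We expect the contraction step to be the main obstacle. The measure $\nu$ is not PSD, so the map $\bS\mapsto\int\bT(\bI+\bS\bT)^{-1}\nu(\de\bT)$ must be checked to send $\bbH_+^k$ to the closed lower half-plane. To see this, note that $\bT(\bI+\bS\bT)^{-1}=(\bT^{-1}+\bS)^{-1}$ formally, and in general its imaginary part equals $-(\bI+\bT\bS^*)^{-1}\bT\,\Im\bS\,\bT(\bI+\bS\bT)^{-1}\preceq 0$. Hence $\int\bT(\bI+\bS\bT)^{-1}\nu(\de\bT)-\bZ$ has imaginary part $\preceq-\Im\bZ\prec 0$, and its inverse lies in $\bbH_+^k$ with norm at most $\|(\Im\bZ)^{-1}\|$. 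With this, the Earle--Hamilton theorem gives uniqueness of the fixed point. Combined with the first part, the unique solution is $\hat\bS(\bZ)$.
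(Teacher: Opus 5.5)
Your existence half is at the same level as the paper's, which also defers the identity $\bK(\hat\bS(\bZ))=\bZ$ to a leave-one-out argument. Your check that $\Im\hat\bS(\bZ)\succ\bzero$ is correct.

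\textbf{The gap is in uniqueness.} Earle--Hamilton requires $\Phi_\bZ(\bbH_+^k)$ to stay at positive distance from $\partial\bbH_+^k$, i.e.\ $\inf_{\bS\in\bbH_+^k}\lambda_{\min}(\Im\Phi_\bZ(\bS))>0$, and this is false. Write $\mathbf{M}:=\int\bfeta(\bS,\bT)\nu(\de\bT)-\bZ$. Your computation only gives $\Im\Phi_\bZ(\bS)\succeq\alpha^{-1}\lambda_{\min}(\Im\bZ)\,\mathbf{M}^{-1}\mathbf{M}^{-*}$. This bound degenerates because $\bS\mapsto\int\bfeta(\bS,\bT)\nu(\de\bT)$ is unbounded on $\bbH_+^k$: it blows up as $\bS$ approaches real points where $\bI+\bS\bT$ is singular.

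Concretely, take $k=1$ and $\nu=\delta_1$. Then $\Phi_z(s)=\alpha^{-1}\big((1+s)^{-1}-z\big)^{-1}\to 0$ as $s=-1+i\delta$, $\delta\downarrow 0$. In fact $\Phi_z(\bbH_+)$ is exactly the open disk of radius $(2\alpha\Im z)^{-1}$ tangent to $\R$ at $0$.

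The usual device of restricting to $\{\Im\bS\succ\epsilon\bI,\ \|\bS\|<R\}$ (as for operator-valued semicirculars) does not close here in general either. Unlike a linear completely positive map, $\int\bfeta\,\de\nu$ is only bounded by $\epsilon^{-1}$ on that set. So the resulting lower bound on $\Im\Phi_\bZ$ is $O(\epsilon^2)$ rather than $>\epsilon$.

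\textbf{A simple repair} is to apply Earle--Hamilton to $\Phi_\bZ\circ\Phi_\bZ$. Let $\bS=\Phi_\bZ(\bS')$ and $\mathbf{M}':=\int\bfeta(\bS',\bT)\nu(\de\bT)-\bZ$. Then $\bI+\bS\bT=\alpha^{-1}\mathbf{M}'^{-1}(\alpha\mathbf{M}'+\bT)$, hence $\bfeta(\bS,\bT)=\bT-\bT(\alpha\mathbf{M}'+\bT)^{-1}\bT$. Since $\Im(\alpha\mathbf{M}'+\bT)\preceq-\alpha\Im\bZ$, this gives $\|\bfeta(\bS,\bT)\|_\op\le C+C^2/(\alpha\lambda_{\min}(\Im\bZ))$, where $C:=\sup_{\bT\in\supp(\nu)}\|\bT\|_\op$. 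The bound is uniform in $\bS'\in\bbH_+^k$.

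Feeding this into your estimate shows $\Phi_\bZ^2(\bbH_+^k)\subseteq\{\Im\bS\succeq c\bI\}$ for some $c>0$. Also $\Phi_\bZ^2$ is bounded by $(\alpha\lambda_{\min}(\Im\bZ))^{-1}$. Hence $\Phi_\bZ^2$ has a unique fixed point. Since $\Phi_\bZ$ maps that point to another fixed point of $\Phi_\bZ^2$, the map $\Phi_\bZ$ has exactly one fixed point.

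\textbf{Comparison with the paper.} With this fix your route genuinely differs from the paper's. The paper never controls $\Phi_\bZ$ globally. Instead it compares two putative solutions through the linearization $\cuL_{\bS,\tilde\bS}$, and proves $\alpha\,\id-\cuL_{\bS,\tilde\bS}$ invertible by a Cauchy--Schwarz bound on $\cuL_{\bS,\tilde\bS}^q$. It uses the imaginary-part identity only at fixed points, to get $\Gamma_\bS(\Im\bS)<\alpha$.

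Your repaired argument costs the extra a priori estimate above. In return it yields existence of a solution, and convergence of the iteration, independently of the random-matrix input. Identifying that solution with $\hat\bS(\bZ)$ still requires your first part.
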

%
%
%

Such result is fairly standard.
In order to prove that $\hat\bS_\up{\alpha,\nu}(\bZ)$ 
of Eq.~\eqref{eq:free_ST_def} is a solution of 
Eq.~\eqref{eq:Matrix-Fixed-Point}, a standard approach is to use a 
`leave-one-out' argument at finite $n,p$ to establish that the 
empirical Stieltjes transform $\hat\bS_n(\bZ)$ of Eq.~\eqref{eq:empirical_ST}
 satisfies the fixed-point equation in an approximate sense. 
 We refer to Appendix A of~\cite{asgari2025local} for such an argument in this setting.

Uniqueness is typically established by showing that the iterates of the 
fixed-point equation converge to a unique point from any initial condition.
See for example~\cite{helton2007operator} for a proof in the matrix-valued semi-circular setting. 
We will give a full proof of uniqueness in Appendix~\ref{app:proofs_FP} via a slightly different approach. 

As the statement of Theorem~\ref{thm:K_transform_real} suggests,
we will be interested in the Stieltjes transform evaluated at 
self-adjoint matrices $\bU\in \cU_-$, i.e., on the boundary of $\bbH_+^k.$
The next lemma carries out the extension to $\cU_-.$
Before stating it, let us first introduce the notation
\begin{equation}
    \cD_0 := \{\bS\succ\bzero : \bS^{-1} +  \bT \succ\bzero \quad \forall \bT \in\supp(\nu)\}
\end{equation}
to denote PSD matrices at which $\bK$ will be evaluated.

\begin{lemma}[Regularity of $\hat\bS_\up{\alpha,\nu}$ on $\cU_-$]
\label{lemma:regularity_free_ST}
Fix integer $k \ge 1,$
let $\alpha \in (0,\infty)$, and $\nu\in\cuP_0(\sa{k}{\C})$.
The Stieltjes transform $\hat\bS_\up{\alpha,\nu}(\bU)$ defined by 
Eq.~\eqref{eq:free_ST_def}
is analytic on an open set in $\sfM_{k}(\C)$ containing $\cU_-$.
In particular, for all $\bU\in\cU_-,$
\begin{equation}
\label{eq:lim_from_imaginary_matrices}
    \lim_{y\downarrow 0} \hat \bS_\up{\alpha,\nu}(\bU + i y\bI) = \hat \bS_\up{\alpha,\nu}(\bU).
\end{equation}
Consequently, if $\hat \bS_\up{\alpha,\nu}(\bU)\in\cD_0$ we have
\begin{equation}
\label{eq:K_S_U_is_U_on_D0}
    \bK_\up{\alpha,\nu}(\hat\bS_\up{\alpha,\nu}(\bU)) = \bU.
\end{equation}
Furthermore,
 the derivative of $\hat\bS_\up{\alpha,\nu}$ at $\bU\in\cU_-$ as an operator on 
 $\sa{k}{\C}$ is given by
\begin{align}
\label{eq:DhatS}    \De \hat\bS_\up{\alpha,\nu}(\bU)[\bDelta] &= \frac1\alpha(\bI_k \otimes \tau)\left(r_k(\bU)(\bDelta \otimes \id_\cA)r_k(\bU)\right) , \quad\bDelta\in\sa{k}{\C}\, ,\\
    r_k(\bU)&:= \big(w_k -(\bU \otimes \id_\cA)\big)^{-1}\, .\nonumber
\end{align}
Further,  $\De \hat\bS_\up{\alpha,\nu}(\bU)[\cdot]$ is non-singular and
\begin{equation}
    \langle \bDelta, \cuD\hat \bS_\up{\alpha,\nu}(\bU)[\bDelta]\rangle_\fro >0\quad\quad\forall
    \quad \bDelta\neq\bzero.
\end{equation}

\end{lemma}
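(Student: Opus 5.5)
We would prove the lemma in four steps. First show that the resolvent is analytic on a neighborhood of $\cU_-$. Then pass to the limit in the fixed-point equation of Lemma~\ref{lemma:inv_on_upper_plane}. Then differentiate the resolvent. Finally, use positivity of the resolvent to get definiteness of the derivative.

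\textbf{Analyticity.} Fix $\bU_0\in\cU_-$. Then $w_k-\bU_0\otimes\id_\cA\succ 0$ as a bounded self-adjoint operator on $\cH^k$. Its spectrum is compact and contained in $(0,\infty)$, so $w_k-\bU_0\otimes\id_\cA\succeq c\,\id$ for some $c>0$. For any $\bZ\in\sfM_k(\C)$ with $\|\bZ-\bU_0\|_\op<c$, write
\begin{equation}
w_k-\bZ\otimes\id=(w_k-\bU_0\otimes\id)\big(\id-r_k(\bU_0)((\bZ-\bU_0)\otimes\id)\big).
\end{equation}
The second factor is invertible by a Neumann series, since $\|r_k(\bU_0)\|\le c^{-1}$. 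Hence $\bZ\mapsto(w_k-\bZ\otimes\id)^{-1}$ is given by a norm-convergent power series in $\bZ-\bU_0$, and is analytic on this ball. Applying the bounded linear map $\alpha^{-1}(\bI_k\otimes\tau)$ preserves analyticity. The union of these balls is an open set containing $\cU_-$.

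This ball also contains $\bU_0+iy\bI$ for small $y>0$. On $\bbH_+^k$ the same resolvent formula defines $\hat\bS$, so the two expressions agree there. This is what "analytic continuation" means here, and continuity of the analytic function gives \eqref{eq:lim_from_imaginary_matrices}.

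\textbf{The fixed-point equation on $\cU_-$.} By Lemma~\ref{lemma:inv_on_upper_plane}, $\bK(\hat\bS(\bU+iy\bI))=\bU+iy\bI$ for every $y>0$. Let $y\downarrow0$.

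Suppose $\bS_0:=\hat\bS(\bU)\in\cD_0$. Then $\bS_0$ is invertible, and $\bI+\bS_0\bT=\bS_0(\bS_0^{-1}+\bT)$ is invertible for every $\bT\in\supp(\nu)$. Since $\supp(\nu)$ is compact, $\lambda_{\min}(\bS_0^{-1}+\bT)$ is bounded below uniformly on it. So $\bK$ is continuous in a neighborhood of $\bS_0$. The integrand $\bT(\bI+\bS\bT)^{-1}$ is uniformly bounded there, so dominated convergence applies. Continuity of $\bK$ at $\bS_0$ together with \eqref{eq:lim_from_imaginary_matrices} gives \eqref{eq:K_S_U_is_U_on_D0}.

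\textbf{Derivative formula.} Differentiate the resolvent using
\begin{equation}
A^{-1}-B^{-1}=A^{-1}(B-A)B^{-1}.
\end{equation}
This gives, to first order,
\begin{equation}
r_k(\bU+\bDelta)-r_k(\bU)=r_k(\bU)(\bDelta\otimes\id)r_k(\bU)+O(\|\bDelta\|^2).
\end{equation}
Applying $\alpha^{-1}(\bI_k\otimes\tau)$ yields \eqref{eq:DhatS}.

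\textbf{Positive definiteness.} This is the step needing care. For self-adjoint $\bDelta$, write $r:=r_k(\bU)$; it is self-adjoint and $r\succeq\|w_k-\bU\otimes\id\|^{-1}\id\succ0$. Using traciality of $\Tr\otimes\tau$,
\begin{equation}
\langle\bDelta,\De\hat\bS(\bU)[\bDelta]\rangle
=\alpha^{-1}(\Tr\otimes\tau)\big((\bDelta\otimes\id)\,r\,(\bDelta\otimes\id)\,r\big)
=\alpha^{-1}(\Tr\otimes\tau)(Y^*Y),
\end{equation}
where $Y:=r^{1/2}(\bDelta\otimes\id)r^{1/2}$, which is self-adjoint, and the trace of the product equals that of $Y^2$ by cyclicity. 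This quantity is $\ge0$.

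For strictness, suppose it equals zero. Since $\tau$ is faithful, so is $\Tr\otimes\tau$ on $\sfM_k(\cA)$, hence $Y=0$. Because $r^{1/2}$ is invertible, this forces $\bDelta\otimes\id=0$, i.e.\ $\bDelta=\bzero$.

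So the derivative is a positive definite self-adjoint operator on the finite-dimensional real space $\sa{k}{\C}$. It is therefore non-singular. The only subtle points are the faithfulness of $\Tr\otimes\tau$ and the uniform spectral gap on $\cU_-$, both of which are settled above.
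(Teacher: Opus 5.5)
Your proposal is correct and follows the paper's proof essentially step for step. The steps match: a Neumann series for the resolvent around $\bU_0$ (you factor one-sidedly, the paper symmetrically through $m_0^{1/2}$), continuity of $\bK$ at points of $\cD_0$ to pass to the limit $y\downarrow 0$, the resolvent identity for the derivative, and faithfulness of $\Tr\otimes\tau$ applied to $\bigl(r^{1/2}(\bDelta\otimes\id)r^{1/2}\bigr)^2$ for strict positivity.
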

\begin{proof}
We suppress the dependence on $\alpha,\nu$ in the notation.
Fix $\bU_0 \in \cU_{-}$, and define
$m_0:= w_k-\bU_0\otimes \id_\cA\in \sa{k}{\cA}.$
Set 
$c_0:=\|m_0^{-1}\|^{-1}>0.$
Then for any $\bZ\in \sfM_k(\C)$ with $\|\bZ-\bU_0\|<c_0/2$, we can write
$$
w_k-(\bZ\otimes \id_\cA)
=
m_0^{1/2}\Big(\bI - m_0^{-1/2}\big((\bZ-\bU_0)\otimes \id_\cA\big)m_0^{-1/2}\Big)m_0^{1/2}.
$$
Moreover,
$$
\big\|m_0^{-1/2}\big((\bZ-\bU_0)\otimes \id_\cA\big)m_0^{-1/2}\big\|
\le \|m_0^{-1}\|\,\|\bZ-\bU_0\|<\frac{1}{2}\, .
$$
Hence the middle factor is invertible since the associated Neumann series converges, and therefore
$$
\big(w_k-(\bZ\otimes \id_\cA)\big)^{-1}
=
m_0^{-1/2}\sum_{p=0}^\infty
\Big(m_0^{-1/2}\big((\bZ-\bU_0)\otimes \id_\cA\big)m_0^{-1/2}\Big)^p m_0^{-1/2}.
$$
The series converges absolutely in operator norm, uniformly on the
ball $B(\bU_0,c_0/2)\subset \sfM_k(\C)$. It follows that
$\bZ\mapsto \big(w_k-(\bZ\otimes \id_\cA)\big)^{-1}$
is analytic on this ball. Since $(I_k\otimes \tau)$ is continuous and 
linear, we conclude that $\hat\bS(\bZ)$
extends analytically to $B(\bU_0,c_0/2)$. As $\bU_0\in \cU_-$ 
was arbitrary, $\hat \bS$ admits an analytic continuation to $\cU_-$.
Equation~\eqref{eq:lim_from_imaginary_matrices} follows because any analytic function is also continuous.
Finally, Eq.~\eqref{eq:K_S_U_is_U_on_D0} follows because 
 $\bK(\hat\bS(\bU+i y \bI)) = \bU + i y \bI$ for all $y>0$ by
 Lemma~\ref{lemma:inv_on_upper_plane}, $\hat\bS(\bU+i y \bI)\to \hat\bS(\bU)$ as $y\downarrow 0$
 by the previous point, and $\bS\mapsto \bK(\bS)$ is continuous at $\hat\bS(\bU)\in\cD_0$
 because $\inf_{\bT\in\supp(\nu)}\lambda_{\min}(\bS^{-1}+ \bT) > 0$, $\lambda_{\min}(\bS)>0$ 
 for any $\bS\in\cD_0$.

Finally, Eq.~\eqref{eq:DhatS} for the derivative follows by simple calculus.
To show that it is non-singular, use positivity of $r_k(\bU)$ to write, 
for any $\bDelta\in \sa{k}{\C}$,
\begin{align}
\langle \bDelta, \cuD\hat \bS(\bU)[\bDelta]\rangle_\fro
&=
\frac1\alpha (\Tr\otimes \tau)\Big((\bDelta\otimes \id_\cA)\,r_k(\bU)\,(\bDelta\otimes \id_\cA)\,r_k(\bU)\Big)\\
&=
\frac1\alpha (\Tr\otimes \tau)\Big(
\big(r_k(\bU)^{1/2}(\bDelta\otimes \id_\cA)r_k(\bU)^{1/2}\big)^2
\Big)\ge 0.
\end{align}
If this quantity is zero, then by faithfulness of $(\Tr\otimes \tau)$ and positivity of $r_k$, 
we must have $\bDelta = \bzero$.
\end{proof}

\paragraph{The image of $\cU_-$ is contained in $\cD_0$.}
We will now proceed to show that the image of $\cU_-$ under $\hat\bS_\up{\alpha,\nu}$ is contained in the domain of $\bK,$ specifically, in $\cD_0.$
Observe that for $\bU  = u\bI$ with $u\to-\infty,$ we have $\bU \in\cU_-$ eventually when $\nu$
 is compactly supported. Then since $\hat \bS_\up{\alpha,\nu}(\bU) \to \bzero,$ we will have 
 $\hat\bS_\up{\alpha,\nu}(\bU) \in \cD_0$ eventually. The next lemma shows that this property extends to all of $\cU_-$, at least when $\nu$ is a discrete measure.

\begin{lemma}[$\hat\bS_\up{\alpha,\nu}(\cU_-) \subseteq \cD_0$: discrete $\nu$]
\label{lemma:image_Sstar_D_discrete}
Fix $\alpha \in(0,\infty)$.
Assume $\nu = \sum_{i=1}^m \delta_{\bT_i} p_i$ for $\sum_{i=1}^m p_i = 1$ with
$p_i > 0$.
Then for any $\bU \in\cU_-$, we have 
\begin{equation}
    \hat\bS_\up{\alpha,\nu}(\bU) \in \cD_0.
\end{equation}
\end{lemma}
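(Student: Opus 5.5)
We argue by connectedness and continuity along paths. First, $\cU_-$ is convex: if $w_k-\bU_0\otimes\id\succ0$ and $w_k-\bU_1\otimes\id\succ0$, then every convex combination also satisfies this, as in the proof of Theorem~\ref{thm:log}. It is also open, by the Neumann-series argument of Lemma~\ref{lemma:regularity_free_ST}, and contains $u\bI$ for all sufficiently negative $u$. For such $u$ we have $\hat\bS(u\bI)\to\bzero$ with $\hat\bS(u\bI)\approx -u^{-1}\alpha^{-1}\bI\succ\bzero$, so $\hat\bS(u\bI)\in\cD_0$. Fix $\bU_1\in\cU_-$ and a base point $\bU_0=u_0\bI\in\cU_-$ with $\hat\bS(\bU_0)\in\cD_0$, and consider the segment $\bU_t=(1-t)\bU_0+t\bU_1$. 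Let $t^*=\sup\{t\in[0,1]:\hat\bS(\bU_s)\in\cD_0\ \forall s\le t\}$. Since $\cD_0$ is open when $\nu$ is discrete (finitely many strict inequalities) and $\hat\bS$ is continuous on $\cU_-$, we have $t^*>0$. The goal is to show that $\hat\bS(\bU_{t^*})$ lies in $\cD_0$ and that $t^*=1$; it suffices to rule out $\bS^*:=\hat\bS(\bU_{t^*})$ lying on $\partial\cD_0$.

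The key step is to show that the boundary cannot be reached. For $t<t^*$, Lemma~\ref{lemma:regularity_free_ST} gives $\bK(\hat\bS(\bU_t))=\bU_t$, and these $\bU_t$ stay in a compact subset of $\cU_-$. Suppose $\bS^*$ is on the boundary. Then either (i) $\bS^*$ is singular, i.e.\ $\lambda_{\min}(\bS_t)\to0$ along $\bS_t:=\hat\bS(\bU_t)$, or (ii) $\bS^*\succ\bzero$ but $\bS^{*-1}+\bT_i$ is singular for some $i$. In case (ii), there is a unit vector $v$ with $v^*(\bS_t^{-1}+\bT_i)v\to0$. The term $p_i\bT_i(\bI+\bS_t\bT_i)^{-1}$ in $\bK(\bS_t)$ equals $p_i\bT_i\bS_t^{-1/2}(\bS_t^{-1/2}\bT_i\bS_t^{1/2}\cdots)$; more usefully, write $\bT(\bI+\bS\bT)^{-1}=\bS^{-1}-\bS^{-1}(\bS^{-1}+\bT)^{-1}\bS^{-1}$. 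This shows that $\bK(\bS_t)$ contains $-p_i\bS_t^{-1}(\bS_t^{-1}+\bT_i)^{-1}\bS_t^{-1}$, which has an eigenvalue diverging to $-\infty$. The other terms stay bounded, provided they are not also degenerating; if several degenerate at once we handle them jointly, since all such terms are $\preceq$ their bounded parts. Hence $\lambda_{\min}(\bK(\bS_t))\to-\infty$, contradicting $\bK(\bS_t)=\bU_t$ bounded. This is where discreteness is used: each atom has positive mass $p_i$, so one degenerating atom forces a blow-up. In case (i), $\bS_t^{-1}$ blows up in direction $v$. Using the same identity, $\bK(\bS)=\sum_i p_i(\bS^{-1}-\bS^{-1}(\bS^{-1}+\bT_i)^{-1}\bS^{-1})-\alpha^{-1}\bS^{-1}$. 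Along $v$ with $v^*\bS^{-1}v=\lambda\to\infty$ and $\bT_i$ bounded, $v^*\bS^{-1}(\bS^{-1}+\bT_i)^{-1}\bS^{-1}v\approx\lambda+O(1)$, so $v^*\bK(\bS_t)v\approx-\alpha^{-1}\lambda\to-\infty$, again contradicting boundedness. Making this expansion rigorous when $\bS_t^{-1}$ has mixed large and bounded eigenvalues needs a block (Schur complement) computation, and I expect this to be the main technical obstacle.

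It remains to exclude that $\hat\bS(\bU_t)$ leaves through infinity or becomes non-Hermitian-positive some other way. Boundedness of $\hat\bS$ on compact subsets of $\cU_-$ is immediate, since $\|r_k(\bU)\|$ is bounded there. Self-adjointness of $\hat\bS(\bU)$ for self-adjoint $\bU$ follows from $r_k(\bU)$ being self-adjoint and positive, which also gives $\hat\bS(\bU)\succ\bzero$ directly from faithfulness. So the only possible exits are the two boundary mechanisms above, both excluded. Therefore $t^*=1$ and $\hat\bS(\bU_1)\in\cD_0$. Alternatively, one can note that $\hat\bS(\bU)\succ\bzero$ holds automatically, so only the conditions $\bS^{-1}+\bT_i\succ\bzero$ need the continuity argument, which simplifies case (i) away entirely. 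I would adopt this simplification.
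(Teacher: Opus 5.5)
Your proposal is correct and follows essentially the same route as the paper. You start from a point $u_0\bI$ deep in $\cU_-$ where $\hat\bS\in\cD_0$, move along a segment (using convexity of $\cU_-$) to an arbitrary $\bU_1$, and rule out a first exit from $\cD_0$: an atom of positive mass $p_i$ whose matrix $\bS^{-1}+\bT_i$ degenerates forces $\bK(\hat\bS(\bU_t))$ to blow up, contradicting $\bK(\hat\bS(\bU_t))=\bU_t$. Your identity $\bT(\bI+\bS\bT)^{-1}=\bS^{-1}-\bS^{-1}(\bS^{-1}+\bT)^{-1}\bS^{-1}$ makes each atom's contribution $\preceq p_j\bS^{-1}$ on $\cD_0$, and you observe that $\hat\bS(\bU)\succ\bzero$ holds automatically on $\cU_-$; together these supply details that the paper's one-line blow-up claim leaves implicit, namely that simultaneously degenerating atoms cannot cancel and that positive definiteness cannot be lost.
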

\begin{proof}
Once again, we suppress the dependence on $\alpha,\nu$ in the notation.
Since $\nu$ is compactly supported, it follows from the definition
\eqref{eq:wk-def} that $\|w_k\|\le C$, for some constant $C>0$. 
Hence, there exists $u_0 <0$ sufficiently negative such that
$u_0\bI_k \in\cU_-$ and $\hat\bS(u_0 \bI) \in \cD_0$.
Let $\cC_{0}:=\{\bU \prec u_0\bI\}$. 
Since the inverse is monotone with respect to Loewner ordering for positive operators, we have
$\hat\bS(\bU) \preceq\hat\bS(u_0\bI)$ for all $\bU\in\cC_0,$ so that $\hat\bS(\bU) \in \cD_0$ as well.
 Now fix some $\bU_0 \in \cC_0$ and arbitrary $\bU_1\in\cU_-$ and suppose for contradiction that $\bS_1:= \hat\bS(\bU_1) \not \in \cD_0.$ That is, suppose there exists some $\bT_1\in\supp(\nu)$ such that 
\begin{equation}
    \lambda_{\min}\left( \bS_1^{-1}+ \bT_1 \right) \le 0.
\end{equation}
For $\delta\in[0,1]$, let $\bU_\delta := \delta \bU_1 + (1-\delta)\bU_0.$
Clearly, $\bU_\delta \in \cU_-$  for all $\delta \in [0,1],$ and
\begin{equation}
    \lim_{\delta \to 0} \hat\bS(\bU_\delta) \in\cD_0,\quad\quad
    \lim_{\delta \to 1} \hat\bS(\bU_\delta) = \hat\bS(\bU_1) \not \in \cD_0
\end{equation}
since $\delta \mapsto \hat\bS(\bU_\delta)$ is continuous, so there must exist some $\delta_0\in(0,1)$ such that
\begin{equation}
    \hat\bS(\bU_\delta) \in \cD_0 \quad\textrm{for}\;\; \delta < \delta_0\quad\quad\textrm{and}\quad\quad \lambda_{\min}( \hat\bS(\bU_{\delta_0})^{-1} +  \bT_i
    ) = 0 \quad\textrm{for some }i\in[m],
\end{equation}
and since $\nu(\{\bT_i\}) =: p_i > 0$ for all $i\in[m]$ by assumption, we have, using the 
definition of $\bK$ in Eq.~\eqref{eq:K_transform},
\begin{equation}
    \lim_{\delta\uparrow \delta_0} \|\bK(\hat\bS (\bU_\delta))\|_\op = \infty.
\end{equation}
But, this contradicts 
 Lemma~\ref{lemma:regularity_free_ST}
 which guarantees that for $\delta<\delta_0$, we have
\begin{equation}
    \bK(\hat\bS(\bU_\delta)) =  \bU_\delta.
\end{equation}
\end{proof}

%
%

To extend the applicability of the previous argument to general compactly supported $\nu$, 
we state the following fact that allows us to approximate a measure $\nu$ by a discrete one.
\begin{lemma}[Discrete approximation lemma]
\label{lemma:discrete_approx}
Fix $\alpha \in(0,\infty)$.
   For any compactly supported probability measure $\nu\in\cuP(\sa{k}{\C})$, 
   and all $\eps > 0$,  
   there exists a measure $\nu^\up{\eps}$ and a set $\cC^\up{\eps} \subseteq \C^{k\times k}$ 
   such that the following hold:
   \begin{enumerate}
       \item
       \label{item:1_approx}
       $\cC^\up{\eps} \subseteq \supp(\nu)$, has finite cardinality, and 
       $$
       \label{eq:closed_cover}
       \sup_{\bY \in \supp(\nu)} \inf_{\bT \in \cC^\up{\eps}} \|\bT - \bY\|_\op \le \eps\, .$$
       \item
       \label{item:2_approx}
       The measure $\nu^\up{\eps}$ is supported on $\cC^\up{\eps}$, with weights
       $p_\eps(\bT) =\nu^\up{\eps}(\{\bT\})$ satisfying:
       \begin{equation}
       \nu^\up{\eps} :=\sum_{\bT \in \cC^\up{\eps}} p_\eps(\bT)\delta_{\bT},\quad\quad \sum_{\bT\in \cC^\up{\eps}} p_\eps(\bT) = 1 ,\quad\quad 
       p_\eps(\bT) > 0 \quad\forall \bT \in \cC^\up{\eps}.
       \end{equation}
       \item
       \label{item:3_approx}
       Recall the definition of $w_k$ from Eq.~\eqref{eq:wk-def},
       and let, for $\eps >0$,
       $w_k^\up{\eps}$ denote
       the free element 
       defined by Eq.~\eqref{eq:wk-def} with $\nu$ replaced by $\nu^\up{\eps}$,

       Then for any $\bU \in\cU_-(\alpha,\nu)$,
        there exists $\eps_0(\bU) >0$ such that for all $\eps < \eps_0(\bU)$,
       \begin{equation}
        w_k^\up{\eps}  - (\bU \otimes \id_\cA) \succ 0.
       \end{equation}

       \item \label{item:4_approx}
       For any $\bU \in\cU_- (\alpha,\nu) \cap \cU_-(\alpha,\nu^\up{\eps})$
        \begin{equation}
            \|\hat \bS_\up{\alpha,\nu}(\bU)  - \hat \bS_\up{\alpha,\nu^\up{\eps}}(\bU)\|_\op \to 0
        \end{equation}       
        as $\eps\to 0$.

   \end{enumerate}
   
\end{lemma}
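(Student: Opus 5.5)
The plan is to build $\nu^\up{\eps}$ by a quantize-and-push-forward construction. Let $\cS := \supp(\nu)$, which is compact. Pick a finite $\eps$-net $\cC^\up{\eps} = \{\bT^{(1)},\dots,\bT^{(N)}\}\subseteq \cS$ in operator norm; compactness gives this, and item~\ref{item:1_approx} follows immediately.

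Next, partition $\cS$ into Borel cells $A_j := \{\bY\in\cS : \bY$ is closer to $\bT^{(j)}$ than to any other net point, ties broken by smallest index$\}$. Define the quantizer $q_\eps(\bY) := \bT^{(j)}$ for $\bY\in A_j$, and set $\nu^\up{\eps} := (q_\eps)_\#\nu$. Each $\bT^{(j)}$ lies in $\supp(\nu)$, and $A_j$ contains $\cS\cap B(\bT^{(j)},r)$ for some small $r$. Strictly, this only holds up to ties; to be safe, I would first prune the net so that each retained point has $\nu(A_j)>0$. A point with $\nu(A_j)=0$ can be discarded and its cell merged into a neighbour's without breaking the covering property, at the cost of replacing $\eps$ by $2\eps$, or by re-selecting the net. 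Since every net point is in the support, any open ball around it has positive mass, so the masses are positive. This gives item~\ref{item:2_approx}.

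The key feature of the construction is that $\|q_\eps(\bY)-\bY\|_\op\le\eps$ for every $\bY\in\cS$.

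For items~\ref{item:3_approx} and~\ref{item:4_approx}, I realize both free elements on the same space. In the $C^*$-probability space, the joint law of $(t_{ij})$ is $\nu$, realized by a commutative algebra $L^\infty(\nu)$ that is free from $m$. Define $t^\up{\eps}_{ij} := (q_\eps)_{ij}(t)$, the coordinate functions composed with $q_\eps$. These lie in the same (von Neumann) algebra and have joint law $\nu^\up{\eps}$. Then
\[
w_k^\up{\eps} - w_k = (\bI_k\otimes m^{1/2})\,\big(q_\eps(t)-t\big)\,(\bI_k\otimes m^{1/2}),
\]
and since $\|q_\eps(t)-t\|\le\eps$ as an element of $\sa{k}{\cA}$, we get $\|w_k^\up{\eps}-w_k\|\le \eps\|m\|\le \eps(1+\alpha^{-1/2})^2$.

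Given $\bU\in\cU_-(\alpha,\nu)$, let $c:=\lambda_{\min}(w_k-\bU\otimes\id)>0$. Then for $\eps<c/\|m\|$ we have $w_k^\up{\eps}-\bU\otimes\id\succeq (c-\eps\|m\|)\succ0$, which proves item~\ref{item:3_approx}.

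For item~\ref{item:4_approx}, fix $\bU$ in both sets. I use the resolvent identity
\[
r^\up{\eps}-r = r^\up{\eps}\,(w_k - w_k^\up{\eps})\,r .
\]
For $\eps$ small as above, $\|r^\up{\eps}\|\le (c-\eps\|m\|)^{-1}$, so $\|r^\up{\eps}-r\|\le \eps\|m\|/(c(c-\eps\|m\|))\to0$. Applying the contractive map $\alpha^{-1}(\bI_k\otimes\tau)$ then gives convergence in operator norm.

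The main obstacle is justifying that $\hat\bS_\up{\alpha,\nu^\up{\eps}}$, as defined through the abstract element $w_k^\up{\eps}$, coincides with the one built from this coupled realization. It depends only on the $*$-distribution of $w_k^\up{\eps}$, which is determined by freeness of $m$ from the $t^\up{\eps}$ together with their joint law $\nu^\up{\eps}$. A second point is that the resolvent is an analytic function of a bounded element, so the mixed moments determine it. A secondary nuisance is that $q_\eps(t)$ requires bounded Borel functional calculus, so I would work in the enveloping von Neumann algebra, where $\tau$ stays faithful on the relevant subalgebra.
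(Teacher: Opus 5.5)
Your proposal is correct, but for the key estimate behind item~3 it takes a more direct route than the paper.

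\textbf{What is the same.} Your construction of $\nu^{(\eps)}$ is essentially the paper's: an $\eps$-net in $\supp(\nu)$, a Borel partition into cells each containing a small ball around its center, and the push-forward of $\nu$ under the resulting quantizer. The paper uses greedy cells $\overline B_\eps(\bT_i)\setminus\bigcup_{j<i}\overline B_\eps(\bT_j)$ where you use Voronoi cells. Item~4 is the same resolvent-identity argument.

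\textbf{What differs.} The paper also couples $t$ and $t^{(\eps)}$ through the joint law of $(\bT,\bY_\eps(\bT))$. It then bounds $\|w_k-w_k^{(\eps)}\|$ by returning to random matrices:
\begin{itemize}
\item it samples $\hat\bT_i\sim\nu$ with $\hat\bY_i=\bY_\eps(\hat\bT_i)$, takes Gaussian $\bX$, and uses $\|\bW_k-\bW_k^{(\eps)}\|_{\op}\le n^{-1}\|\bX\|_{\op}^2\,\eps$;
\item it writes the free operator norm as the $q\to\infty$ limit of $2q$-th moment norms;
\item it transfers those moments from the matrix model. This needs a deterministic-equivalent identity for the coupled pair of diagonal families, plus a Gaussian operator-norm bound.
\end{itemize}
Your factorization $w_k^{(\eps)}-w_k=(\bI_k\otimes m^{1/2})\,(q_\eps(t)-t)\,(\bI_k\otimes m^{1/2})$, together with $\|q_\eps(t)-t\|\le\eps$, gives the same bound $\eps\|m\|$ in one line, entirely inside the free probability space. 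The only price is that $t^{(\eps)}$ must be a Borel function of $t$.

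\textbf{Avoiding the von Neumann algebra.} You can skip the enveloping von Neumann algebra. Realize $(t,t^{(\eps)})$ as the coordinate functions on $C(\supp\pi)$, where $\pi$ is the law of $(\bT,q_\eps(\bT))$ for $\bT\sim\nu$; this is exactly what the paper's moment-based definition produces. Since $\pi$ is carried by the closed set $\{(\bT,\bY):\|\bY-\bT\|_{\op}\le\eps\}$, $\supp\pi$ lies in that set, so the sup-norm bound holds there. Your observation that $\hat\bS$ depends only on mixed moments (via a norm-convergent series for the resolvent) is the right justification that the coupled realization computes the correct $\hat\bS$ for $\nu^{(\eps)}$. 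The paper uses this fact implicitly.

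\textbf{Item~2.} The pruning/merging detour is unnecessary, and the worry about ties is unfounded. The net points are distinct, so every Voronoi cell contains $\supp(\nu)\cap B(\bT^{(j)},r)$ for any $r$ below half the minimal pairwise distance of the net, whatever the tie-breaking rule. Since $\bT^{(j)}\in\supp(\nu)$, this already gives $p_\eps(\bT^{(j)})>0$.
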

\begin{proof}
    For $\eps>0$,
    throughout, let $B_{\eps}(\bT_i)$ be the open norm ball of radius $\eps$ around $\bT_i$. Let $\cC_0 := \supp(\nu).$
   Construct a \emph{closed} $\eps-$cover $\cC^\up{\eps}$
  (i.e., satisfying Eq.~\eqref{eq:closed_cover})
   of the compact set $\cC_0$ with balls $\overline B_\eps(\bT_i)$ 
   satisfying  
   \begin{equation}
       \|\bT -\bY\|_\op > \eps\quad\quad \forall \bT,\bY\in \cC^\up{\eps},\; \bT\neq\bY.
   \end{equation}
   Concretely, start with $\bT_1\in \cC_0$ arbitrary, then for each $i> 1$, arbitrarily choose $\bT_i \in \cC_0 - \bigcup_{j< i}\overline B_\eps(\bT_j)$ until $\cC_0$ is covered by $\{\overline B_\eps(\bT_i)\}$. 
   This procedure halts after a finite number $m$ of steps  by compactness (For example, see Lemma~4.2.6 of
   ~\cite{vershynin2025high} and the remark that follows it).
   Choose $\cC^\up{\eps} = 
   \{\bT_i:i\le m\}$. 
   Item~\textit{1} holds by construction

   Define
   \begin{equation}
       S_1 := \overline B_\eps(\bT_1),\quad\quad S_i := \overline B_\eps(\bT_i) \setminus \bigcup_{j < i} \overline B_{\eps}(\bT_j),\quad i > 1
   \end{equation}
   and set 
   \begin{equation}
   \label{eq:p_defs}
   p_\eps(\bT_i) := \nu(S_i).
   \end{equation}
Since $\{S_i\}_i$ are disjoint, it is easy to see that
\begin{equation}
    \sum_{\bT\in \cC^\up{\eps}} p_\eps(\bT) =  \nu(\supp(\nu)) = 1.
\end{equation}
By construction, for each $i\in[m]$, there exists $\delta > 0$ so that 
$$B_\delta(\bT_i)\cap \bigcup_{j < i} \overline B_\eps(\bT_j)  = \emptyset.$$
And since $\bT_i \in \cC_0$, we must have 
\begin{equation}
    0 < \nu( B_\delta(\bT_i)) \le p_\eps(\bT_i),
\end{equation}
showing that Item~\textit{2} holds.

For the remaining items, for $\bA\in\C^{k\times k}$, fix $\eps>0$ arbitrarily and let
\begin{eqnarray}
    \bY_\eps(\bA) :=  \sum_{j\in[m]} \bT_j \one_{\bA \in S_j},
\end{eqnarray}
and define i.i.d. random variables 
$(\hat\bT_i, \hat\bY_i)_{i\in[n]}$ as follows:
for each $i$,
\begin{equation}
    \hat\bT_t \sim \nu, \qquad \text{and} \qquad \hat\bY_i := \bY_\eps( \hat\bT_i).
\end{equation}
Define the following empirical measures:
\begin{equation}
    \hat\nu_n := \frac1n\sum_{i=1}^n \delta_{ \bT^\up{i}},\qquad
    \hat\nu_n^\up{\eps} := \frac1n\sum_{i=1}^n \delta_{ \bY^\up{i}}
\end{equation}
This guarantees that,
almost-surely,
$\supp(\hnu_n)\subseteq \supp(\nu), \supp(\hnu_n^\up{\eps})\subseteq \cC^\up{\eps}$ for all $n$, 
and as $n\to\infty$,  we have 
$\hnu_n\stackrel{w}{\Rightarrow} \nu$,
\begin{equation}
    \hnu_n^\up{\eps}(S_j)  = \frac1n \#\{i : \hat \bY_i \in S_j\} =
    \frac1n \#\{i : \hat  \bT_i \in S_j\}
    \to \nu(S_j) = p_\eps(\bT_j),\qquad \textrm{for all}\; j\in[m],
\end{equation}
and further, for any bounded continuous function $f:\C^{k\times k}\times \C^{k\times k}\to\R$,
\begin{equation}
    \frac1n\sum_{i=1}^n f(\bT^\up{i}, \bY^\up{i}) \to \E[f(\bT, \bY_\eps(\bT))],\qquad
    \textrm{where} \qquad \bT \sim \nu.
 \end{equation}

 Now let $(\cA,\tau)$ be a $C^*$-probability space with Marchenko-Pastur elements $m$, as in Section~\ref{sec:proofs_main}, 
free from the algebra generated by $(t_{i,j},t^\up{\eps}_{i,j})_{i,j\in[k]}$, which are 
in turn defined by
\begin{equation}
\tau \bigg( P\big( \{t_{i,j}\}_{i,j\in[k]}, \{t^\up{\eps}_{i,j}\}_{i,j\in[k]} \big) \bigg)
= \int 
P\big( \{\bT_{i,j}\}_{i,j\in[k]}, \{(\bY^\up{\eps}(\bT))_{i,j}\}_{i,j\in[k]} \big)
 d\nu(\bT)
\end{equation}
for any non-commutative polynomial $P$,
so that $w_k = (m^{1/2} t_{i,j} m^{1/2})_{i,j\in[k]}$ 
and $w_k^\up{\eps} = (m^{1/2} t_{i,j}^\up{\eps} m^{1/2})_{i,j\in[k]}$ in the sense of tracial moments.

Now let $\bX\in\R^{n\times p}$ be a matrix with i.i.d. standard Gaussian entries so that it satisfies the 
 assumptions in Proposition~\ref{prop:asymptotic_FP}, and 
 define
\begin{align}
    \bW_k^{\up{\eps}} := \frac1n(\bX \otimes \bI_k)^* \Diag(\hat\bY_i)_{i\le n} (\bX \otimes \bI_k).
\end{align}
Since  $\bW_k= n^{-1}(\bX \otimes \bI_k)^* \Diag(\hat\bT_i)_{i\le n} (\bX \otimes \bI_k)$,
\begin{align}
\label{eq:bound_on_W_k}
 \|\bW_k^{\up{\eps}} - \bW_k\|_\op
 \le  \frac{\|\bX\|^2_\op}{n}  \max_{i\in[n]} \|\hat\bT_i - \hat\bY_i\|_\op
\le  \frac{\|\bX\|^2_\op}{n} \max_{j\in[m]} \max_{\tilde \bT \in S_j} \|\bT_j-\tilde \bT\|_\op \\
\end{align} 
and we can bound the difference
\begin{align}
\|w_k - w_k^\up{\eps}\|_{\cB(\cH^k)} 
 &\stackrel{(a)}{=} \lim_{q\to\infty}
 \Big[
\Big(\frac1k \Tr \otimes \tau\Big)(w_k - w_k^\up{\eps})^{2q}
\Big]^{1/(2q)}\\
&\stackrel{(b)}{=}
\lim_{q\to\infty}
\alpha^{1/(2q)}
 \lim_{n\to\infty}
\E\Big[\Big(\frac1k \Tr \otimes \frac1n\Tr\Big)\big(\bW_k - \bW_k^\up{\eps}\big)^{2q}\Big]^{1/(2q)}\\
&\stackrel{(c)}{\le}
  \lim_{q\to\infty}
  \lim_{n\to\infty} 
  \Big(\alpha\frac{p}{n}\Big)^{1/(2q)}
\E\big[\|\bW_k - \bW_k^\up{\eps}\|_\op^{2q}\big]^{1/(2q)}\\
&\stackrel{(d)}{\le}
    \lim_{q\to\infty} \lim_{n\to\infty} 
  \Big(\alpha\frac{p}{n}\Big)^{1/(2q)}
\frac1n (\sqrt{n} + \sqrt{p} + 2\sqrt{q})^2 \eps
\\
&\le C(\alpha)\, \eps,
\label{eq:bound_on_w_k_w_k_eps}
\end{align}
where in $(a)$ we used Proposition~\ref{prop:spectra_match}, in $(b)$ we used 
Eq.~\eqref{eq:free_determinism}, in $(c)$ we used monotonicity of $L^p$ norms, and 
in $(d)$ we used a standard bound on the operator norm of Gaussian matrices and Eq.~\eqref{eq:bound_on_W_k}.
Since $\eps>0$ was arbitrary, this shows that Item~\textit{3} holds.

Finally, Item~\textit{4} holds by applying the resolvent identity 
\begin{equation}
   \hat \bS_\up{\alpha,\nu}(\bU) - \hat \bS_\up{\alpha,\nu^\up{\eps}}(\bU) 
   = 
   \frac1\alpha (\bI_k\otimes \tau)\Big((w_k^\up{\eps} - \bU \otimes \id_\cA)^{-1} (w_k- w_k^\up{\eps})(w_k - \bU \otimes \id_\cA)^{-1} \Big),
\end{equation}
using Item~\ref{item:3_approx}
to uniformly bound $\|(w_k^\up{\eps} - \bU \otimes \id_\cA)^{-1}\|, \|(w_k - \bU \otimes \id_\cA)^{-1}\|$
uniformly for $\eps$ sufficiently small along with 
Eq.~\eqref{eq:bound_on_w_k_w_k_eps}.

\end{proof}

\begin{lemma}[$\hat\bS_\up{\alpha,\nu}(\cU_-)\subseteq\cD_0$ for general measures]
\label{lemma:image_Sstar_D_general}
Let $\alpha \in (0,\infty)$.
Assume $\nu \in\cuP_0(\sa{k}{\C})$ (i.e., $\nu$ is a compactly supported measure).
Then for any $\bU \in\cU_-$, we have 
\begin{equation}
    \hat\bS_\up{\alpha,\nu}(\bU) \in \cD_0.
\end{equation}
\end{lemma}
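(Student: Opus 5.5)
We extend Lemma~\ref{lemma:image_Sstar_D_discrete} from discrete to general compactly supported $\nu$ by approximation, using Lemma~\ref{lemma:discrete_approx}. Fix $\bU\in\cU_-(\alpha,\nu)$ and set $\bS:=\hat\bS_\up{\alpha,\nu}(\bU)$. For $\eps>0$, let $\nu^\up{\eps}$ and $\cC^\up{\eps}\subseteq\supp(\nu)$ be as in Lemma~\ref{lemma:discrete_approx}. By Item~\ref{item:3_approx}, for $\eps<\eps_0(\bU)$ we have $\bU\in\cU_-(\alpha,\nu^\up{\eps})$. Since $\nu^\up{\eps}$ is discrete with positive weights (Item~\ref{item:2_approx}), Lemma~\ref{lemma:image_Sstar_D_discrete} gives $\bS_\eps:=\hat\bS_\up{\alpha,\nu^\up{\eps}}(\bU)\in\cD_0(\nu^\up{\eps})$. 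By Item~\ref{item:4_approx}, $\bS_\eps\to\bS$ as $\eps\to 0$.

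\textbf{Passing to the limit.} The limiting statements only give closed conditions: $\bS\succeq\bzero$, and $\bS^{-1}+\bT\succeq \bzero$ on $\supp(\nu)$ if $\bS$ is invertible. We need strict inequalities, so we require a quantitative margin that is uniform in $\eps$. We will show there exists $\delta=\delta(\bU)>0$ such that, for all small $\eps$,
\begin{equation}
\bS_\eps\succeq \delta\bI,\qquad \bS_\eps^{-1}+\bT\succeq\delta\bI\quad\forall\,\bT\in\cC^\up{\eps}.
\end{equation}

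\textbf{Bound on $\bS_\eps$.} Let $\bU'\prec\bU$ be a matrix with $\bU'\in\cU_-(\alpha,\nu)$, e.g.\ $\bU'=\bU-\bI$. Monotonicity of the resolvent in Loewner order (as used in the proof of Lemma~\ref{lemma:image_Sstar_D_discrete}) gives $\bS_\eps\succeq\hat\bS_\up{\alpha,\nu^\up{\eps}}(\bU')$. Moreover $\hat\bS(\bU')$ is strictly positive, because $r_k\succ0$ is bounded below when $w_k$ is bounded, and faithfulness applies. This lower bound is uniform, since $\|w_k^\up{\eps}\|$ is uniformly bounded.

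\textbf{Margin for $\bS_\eps^{-1}+\bT$: the main obstacle.} The key identity is $\bK_\up{\alpha,\nu^\up{\eps}}(\bS_\eps)=\bU$ from Lemma~\ref{lemma:regularity_free_ST}. In it, the atom at $\bT_j$ contributes $p_\eps(\bT_j)\,\bT_j(\bI+\bS_\eps\bT_j)^{-1}$. If $\bS_\eps^{-1}+\bT_j$ had an eigenvalue close to $0$, this term would blow up like $p_\eps(\bT_j)/\lambda_{\min}$. However, the weights $p_\eps(\bT_j)$ themselves tend to $0$, so this blow-up alone is not a contradiction.

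I would therefore argue by homotopy instead of blow-up. Suppose, for contradiction, that $\lambda_{\min}(\bS^{-1}+\bT_*)\le 0$ for some $\bT_*\in\supp(\nu)$. The map $\bU'\mapsto\hat\bS_\up{\alpha,\nu}(\bU')$ is continuous on the convex set $\cU_-$, and it lies in $\cD_0$ near $u_0\bI$. Repeating the path argument of Lemma~\ref{lemma:image_Sstar_D_discrete} yields a first point $\bU_{\delta_0}$ where $\lambda_{\min}(\hat\bS(\bU_{\delta_0})^{-1}+\bT_*)=0$, while $\hat\bS(\bU_\delta)\in\cD_0$ and $\bK(\hat\bS(\bU_\delta))=\bU_\delta$ for $\delta<\delta_0$.

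Now $\nu$ charges every neighbourhood of $\bT_*$. Hence $\int \bT(\bI+\bS\bT)^{-1}\nu(\de\bT)$ diverges as $\bS^{-1}+\bT_*$ degenerates, provided the integrand blows up non-integrably near $\bT_*$. This is the delicate point, since $1/\mathrm{dist}$-type singularities may remain integrable.

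To avoid it, I would differentiate instead. Along the path, $\cuD\hat\bS$ is bounded uniformly, because $r_k$ is bounded on compact subsets of $\cU_-$. By the inverse function relation, $\cuD_\bS\bK(\hat\bS(\bU_\delta))$ is the inverse of $\cuD\hat\bS$, so it stays bounded away from $0$. Its integral term $\int \bT(\bI+\bS\bT)^{-1}\bDelta(\bI+\bT\bS)^{-1}\bT\,\nu(\de\bT)$ has a quadratic singularity near $\bT_*$. I would use this to get the contradiction: the positivity of $\cuD\hat\bS$ from Lemma~\ref{lemma:regularity_free_ST} forces this term to remain bounded, which should be incompatible with the quadratic singularity.

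I expect this step, controlling $\bS_\eps^{-1}+\bT$ uniformly near the boundary of $\cD_0$ for non-atomic $\nu$, to be the real work.
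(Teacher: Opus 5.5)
There is a genuine gap in the decisive step, and the proposal leaves that step open. Your contradiction rests on the claim that the integral term $\int \bT(\bI+\bS\bT)^{-1}\bDelta(\bI+\bT\bS)^{-1}\bT\,\nu(\mathrm{d}\bT)$ must blow up as $\hat\bS(\bU_\delta)$ approaches $\partial\cD_0$. But the quadratic singularity can be integrable, just like the first-order one you already flagged.

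Take $k=1$ and $\nu(\mathrm{d}t)\propto(1+t)^{\beta}\,\mathrm{d}t$ on $[-1,0]$ with $\beta>1$. Then $\cD_0=(0,1)$, and $\int t^2(1+st)^{-2}\,\nu(\mathrm{d}t)$ stays bounded as $s\uparrow 1$. For small $\alpha$ this is exactly what happens at the true edge: $s(u)\uparrow 1\in\partial\cD_0$ as $u\uparrow\zeta_-$, while $K$ stays bounded and $K'$ stays bounded and bounded away from $0$. So uniform two-sided bounds on $\cuD\hat\bS$ and $\cuD\bK$ along your path are compatible with $\hat\bS$ reaching $\partial\cD_0$, and no contradiction follows. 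The same remark disposes of any attempt to get a uniform quantitative margin from the fixed-point equation alone.

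The missing idea is that you do not need a margin at all. Your first paragraph already gives the closed statement.
\begin{itemize}
\item $\bS=\hat\bS(\bU)\succ\bzero$. This follows from your lower bound, or directly from $r_k(\bU)\succeq(\|w_k\|+\|\bU\|)^{-1}$.
\item $\bS^{-1}+\bT\succeq\bzero$ for every $\bT\in\supp(\nu)$. This follows from $\bS_\eps\in\cD_0(\nu^{(\eps)})$, the $\eps$-net property (Item 1 of Lemma~\ref{lemma:discrete_approx}), and $\bS_\eps\to\bS$ (Item 4).
\end{itemize}
Together these say $\hat\bS(\cU_-)\subseteq\overline{\cD_0}$.

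The upgrade to strict inequalities is purely topological. By Lemma~\ref{lemma:regularity_free_ST}, $\cuD\hat\bS(\bU)$ is nonsingular on $\sa{k}{\C}$, so by the inverse function theorem $\hat\bS$ is an open map on $\cU_-$. Hence $\hat\bS(\cU_-)$ is an open set contained in $\overline{\cD_0}$, and therefore lies in its interior. That interior is $\cD_0$ itself: in the variable $\bY=\bS^{-1}$, $\cD_0$ becomes the open convex set $\{\bY\succ\bzero:\bY+\bT\succ\bzero\ \forall\bT\in\supp(\nu)\}$. This is precisely how the paper closes the argument, with no homotopy or derivative blow-up needed.
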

\begin{proof}
Fix $\bU \in\cU_-$ so that $w_k - (\bU \otimes  \id_\cA) \succ \bzero$.
For any $\eps >0$, denote $w_k^\up{\eps}$ 
the free element defined by Eq.~\eqref{eq:wk-def} with $\nu^\up{\eps}$ 
instead of $\nu$.
By
Item~\textit{\ref{item:3_approx}} of Lemma~\ref{lemma:discrete_approx},
we can choose some $\eps := \eps(\bU) >0$, depending on $\bU$, so that 
$w_k^\up{\eps} - (\bU \otimes  \id_\cA) \succ 0$.
Denote  $\cU_-^\up{\eps} := \{\bU\in \sa{k}{\C}: 
w_k^\up{\eps} - (\bU \otimes  \id_\cA) \succ 0\}$ and 
$\hat\bS^\up{\eps} (\,\cdot\,) := \hat\bS_{\up{\nu^\up{\eps},\alpha}}(\,\cdot\,)$.

Item~\ref{item:2_approx} of Lemma~\ref{lemma:discrete_approx} guarantees 
that the conditions of Lemma~\ref{lemma:image_Sstar_D_discrete} hold, 
from which we conclude that
\begin{equation}
\label{eq:smallest_eval_discrete}
   \lambda_{\min} \big(\hat\bS^\up{\eps}(\bU)^{-1}+ \tilde\bT\big) > 0 \quad\quad\textrm{for all}\quad\quad \tilde\bT\in\supp(\nu^\up{\eps}).
\end{equation}
Therefore, for any $\bT\in\supp(\nu),$ we have by Item~\textit{\ref{item:1_approx}} of Lemma~\ref{lemma:discrete_approx},
\begin{equation}
\label{eq:smallest_eval_discrete_2}
    \lambda_{\min}\big( \hat\bS^\up{\eps}(\bU)^{-1} +\bT\big) \ge  -\eps.
\end{equation}
Now Item~\textit{\ref{item:4_approx}} of
Lemma~\ref{lemma:discrete_approx} along with 
the fact that $\bU \in \cU_-\cap\cU_-^\up{\eps}$ guarantees that $\hat\bS^\up{\eps}(\bU) \to \hat\bS(\bU)$ as $\eps\to 0$.
Combining with the bound
of Eq.~\eqref{eq:smallest_eval_discrete_2}, we obtain
\begin{equation}
    \lambda_{\min}\big( \hat\bS(\bU)^{-1} +\bT\big) \ge 0.
\end{equation}

Since $\bU\in\cU_-$ was arbitrary,  we have shown that for all $\bU\in\cU_-$, $\hat\bS(\bU) \in \overline \cD_0.$
Then since Lemma~\ref{lemma:regularity_free_ST} guarantees that $\hat\bS$ has a non-singular
 derivative at $\bU \in\cU_-$,
the inverse function theorem in turn guarantees that 
$\hat\bS(\cU_-)$ is an open set, and is hence contained in $\cD_0$.
\end{proof}

\paragraph{An alternative characterization of $\cP_-$.}
Before continuing on to show that $\hat\bS_\up{\alpha,\nu}(\cU_-) \subseteq \cP_-$, we will give a characterization of $\cP_-$ that will be useful in the proof.
We associate to each $\bS \in\cD_0$ the following operator mapping matrices to matrices:
\begin{equation}
    \cuT_\bS(\bDelta) := \alpha\int \bZ(\bI + \bZ)^{-1} \bDelta \bZ(\bI + \bZ) ^{-1}\nu(\de\bT)\quad\quad \bZ := \bS^{1/2}\bT\bS^{1/2}.
\end{equation}
Since $\bZ$ commutes with $(\bI +\bZ)^{-1}$, $\cuT_\bS$ is self-adjoint as a bounded linear operator on the space of self-adjoint matrices 
endowed with the trace inner product. Further, it is immediate to see
that $\<\bDelta, \cuT_\bS(\bDelta)\> \ge 0$ for all $\bDelta\in\sa{k}{\C}$, i.e. $\cuT_\bS$ is 
positive semidefinite.

Our interest in this operator comes from its relationship with the derivative of the function $\bK\equiv \bK_\up{\alpha,\nu}$, namely, observe that for all $\bS \in\cD_0$,  we have
\begin{equation}
\label{eq:DK_to_cuT}
\bS^{1/2}\cuD_\bS \bK(\bS)[\bDelta]\bS^{1/2} =  \frac1\alpha \left( \bS^{-1/2} \bDelta \bS^{-1/2}   - \cuT_\bS(\bS^{-1/2}\bDelta \bS^{-1/2})\right).
\end{equation}
Hence,  for any $\bDelta\in\sa{k}{\C}$, 
defining $\bA :=  \bS^{-1/2}\bDelta\bS^{-1/2}$,
we obtain
\begin{equation}
\label{eq:DK_to_cuT_norm}
   \langle
    \bDelta,\cuD_\bS \bK(\bS)[\bDelta] \rangle_\fro
   = 
   \frac1\alpha \big(
   \langle \bA,\bA\rangle - \langle \bA ,\cuT_\bS(\bA) \rangle
   \big).
\end{equation}
Therefore, using the fact that $\cuT_\bS$ is positive semidefinite, 
$\cP_-$ can be alternatively written as
\begin{equation}
    \cP_- = \{\bS\succ\bzero : \|\cuT_\bS\|_{F\to F} <1 \} \cap \cD_0\, .
\end{equation}

The following lemma gives a further characterization that turns out to 
be useful.
\begin{lemma}
\label{lemma:set_characterization}
For $\alpha\in(0,\infty)$, integer $k \ge 1,$ 
$\nu\in\cuP_0(\sa{k}{\C}),$ 
define
\begin{equation}
\tilde\cP_- := \{\bS \succ\bzero : \exists \bB\succ\bzero\;\;\textrm{s.t.}\;\; \cuT_\bS(\bB)\prec  \bB
\} \cap \cD_0.
\end{equation}
Then $\cP_- = \tilde\cP_-$.
\end{lemma}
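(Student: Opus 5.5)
The plan is to use the description of $\cP_-$ obtained just before the lemma, namely $\cP_- = \{\bS\succ\bzero:\|\cuT_\bS\|_{F\to F}<1\}\cap\cD_0$. With this in hand it suffices to show, for each fixed $\bS\in\cD_0$, that
\[
\|\cuT_\bS\|_{F\to F}<1 \quad\Longleftrightarrow\quad \exists\,\bB\succ\bzero \text{ with } \cuT_\bS(\bB)\prec\bB .
\]
Two structural facts about $\cuT_\bS$ will be used throughout.

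\emph{(i) Norm equals spectral radius.} $\cuT_\bS$ is self-adjoint and positive semidefinite on $\sa{k}{\C}$ with the trace inner product, as noted in the text. Hence $\|\cuT_\bS\|_{F\to F}$ equals its spectral radius $\rho(\cuT_\bS)$, which is its largest eigenvalue.

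\emph{(ii) Positivity on the cone.} $\cuT_\bS$ preserves the Loewner order. Indeed, writing $\bA_\bT:=\bZ(\bI+\bZ)^{-1}$ with $\bZ=\bS^{1/2}\bT\bS^{1/2}$, this matrix is self-adjoint because $\bZ$ commutes with $(\bI+\bZ)^{-1}$. Therefore
\[
\cuT_\bS(\bDelta)=\alpha\int \bA_\bT\bDelta\bA_\bT^*\,\nu(\de\bT),
\]
and $\bDelta\succeq\bzero$ implies $\cuT_\bS(\bDelta)\succeq\bzero$. This integral is well defined and bounded because $\bS\in\cD_0$ and $\nu$ is compactly supported, so $\bI+\bZ$ is uniformly invertible on $\supp(\nu)$.

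\emph{($\Rightarrow$).} Suppose $\|\cuT_\bS\|<1$. Then the Neumann series $\bB:=\sum_{j\ge0}\cuT_\bS^j(\bI)$ converges. By (ii) each term is PSD, and the $j=0$ term is $\bI$, so $\bB\succeq\bI\succ\bzero$. Moreover $\bB-\cuT_\bS(\bB)=\bI\succ\bzero$, which gives $\cuT_\bS(\bB)\prec\bB$.

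\emph{($\Leftarrow$).} Suppose $\bB\succ\bzero$ satisfies $\cuT_\bS(\bB)\prec\bB$.

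First, by finite dimensionality and $\bB\succ\bzero$, there is $\delta\in(0,1)$ with $\cuT_\bS(\bB)\preceq(1-\delta)\bB$.

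Second, iterating this inequality using the order-preservation (ii) gives $\bzero\preceq\cuT_\bS^j(\bB)\preceq(1-\delta)^j\bB$.

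Third, take any $\bX\in\sa{k}{\C}$ with $\|\bX\|_\op\le1$. Set $c:=\lambda_{\min}(\bB)^{-1}$, so that $-c\bB\preceq\bX\preceq c\bB$. Applying (ii) to $c\bB\mp\bX$ yields
\[
-c(1-\delta)^j\bB\preceq\cuT_\bS^j(\bX)\preceq c(1-\delta)^j\bB .
\]
Hence $\|\cuT_\bS^j\|\le C(1-\delta)^j$.

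Finally, by Gelfand's formula this gives $\rho(\cuT_\bS)\le1-\delta<1$, and by (i) $\|\cuT_\bS\|_{F\to F}<1$.

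There is an alternative for the last direction: take a top eigenvector $\bP$ and pair it with $\bB$ using self-adjointness. That route would need $\bP\succeq\bzero$, i.e.\ a Perron–Frobenius (Krein–Rutman) statement for positive maps. The iteration argument avoids this, so I expect no real obstacle. The only points needing care are the uniform boundedness of $\bA_\bT$ on $\supp(\nu)$ and the two-sided sandwich of a general self-adjoint $\bX$ by multiples of $\bB$.
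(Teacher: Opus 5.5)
Your argument is correct. The forward inclusion, a Neumann series $\sum_{j\ge 0}\cuT_\bS^j$ applied to a positive definite matrix, is exactly the paper's.

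\textbf{The reverse inclusion takes a different route.} The paper writes $\cuT_\bS^p(\bDelta)$ as an expectation over i.i.d.\ copies $\bA_1,\dots,\bA_p,\tilde\bA_1,\dots,\tilde\bA_p$. It then uses a trace Cauchy--Schwarz inequality, together with exchangeability, to reduce $\|\cuT_\bS^p(\bDelta)\|_F^2$ to a single positive quadratic form. Finally it peels off one factor at a time by inserting $\bB^{1/2}\bB^{-1}\bB^{1/2}$, arriving at $\|\cuT_\bS^p\|_{F\to F}\le C\delta^p$.

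You use only that $\cuT_\bS$ is order preserving. The strictly subinvariant $\bB$ gives $\cuT_\bS^j(\bB)\preceq(1-\delta)^j\bB$. Sandwiching an arbitrary self-adjoint $\bX$ between $\pm c\bB$ then transfers this geometric decay to all of $\sa{k}{\C}$. This is the standard Lyapunov-vector (Perron--Frobenius) argument. It is shorter, avoids the i.i.d.-copies bookkeeping, and works for any positive map. What the paper's template buys is reuse: the same Cauchy--Schwarz computation handles the two-sided maps $\bDelta\mapsto\alpha\E[\bA\bDelta\tilde\bA]$ in Lemma~\ref{lemma:K_injective} and $\cuL_{\bS,\tilde\bS}$ in the appendix. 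Those maps do not preserve the positive semidefinite cone, so your sandwich argument would not apply to them directly.

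\textbf{A caveat inherited from the text.} Your step (i) repeats the text's claim that $\cuT_\bS$ is positive semidefinite as an operator. This is false once $\nu$ charges indefinite matrices. Take $k=2$, $\nu=\delta_{\bT}$ with $\bT=\mathrm{diag}(1,-1/2)$, and $\bS=\bI\in\cD_0$. Then $\bA=\mathrm{diag}(1/2,-1)$, and $\cuT_\bS$ maps $\bDelta=\bigl(\begin{smallmatrix}0&1\\1&0\end{smallmatrix}\bigr)$ to $-\tfrac{\alpha}{2}\bDelta$.

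This matters in your forward direction. The original definition of $\cP_-$ amounts to $\lambda_{\max}(\cuT_\bS)<1$, but the Neumann series needs $\rho(\cuT_\bS)<1$. So you need $\lambda_{\max}(\cuT_\bS)=\rho(\cuT_\bS)$. That identity is true, but the reason is that $\cuT_\bS$ is a positive map, not that it is PSD. You can justify it in either of two ways:
\begin{enumerate}
\item Invoke Krein--Rutman, which you already mention: the spectral radius of a cone-preserving map is an eigenvalue.
\item Argue elementarily. For $\bX=\bX_+-\bX_-$ one has $|\langle\bX,\cuT_\bS(\bX)\rangle|\le\langle|\bX|,\cuT_\bS(|\bX|)\rangle$, because all cross terms $\langle\bX_\pm,\cuT_\bS(\bX_\mp)\rangle$ are nonnegative. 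Combined with $\||\bX|\|_F=\|\bX\|_F$, this gives the identity.
\end{enumerate}
Your reverse direction is unaffected, since $\lambda_{\max}\le\rho$ holds for any self-adjoint operator.
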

\begin{proof}

The inclusion $\cP_- \subseteq \tilde\cP_-$ holds more generally for linear self-adjoint operators. To see this,
fix $\bS\in\cP_-$ and any $\bC\succ\bzero$ arbitrary, then define 
\begin{equation}
    \bB := \left(\bI + \sum_{p=1}^\infty \cuT_\bS^p\right)(\bC)
\end{equation}
where $\cuT_\bS^p$ denotes the $p-$fold composition of $\cuT_\bS$.
Since $\|\cuT_\bS\|_{\fro\to\fro} < 1$,
the operator in the above display is bounded in $\|\cdot\|_{\fro\to\fro}$ since the sum is geometric.
Clearly, $\cuT_\bS$ maps PSD matrices to PSD matrices so $\bB\succ\bzero.$
Meanwhile, by linearity
\begin{equation}
    \cuT_\bS(\bB) = \sum_{p=1}^\infty \cuT_\bS^p(\bC) = \bB - \bC \prec \bB
\end{equation}
since $\bC \succ\bzero.$ i.e., we obtain the estimate
    $\bzero \preceq \cuT_\bS(\bB) \prec \bB$,
    which implies $\bS\in\tilde\cP_-.$

To show the opposite inclusion, fix $\bS\in\tilde\cP_-$,
and introduce the expectation notation
 \begin{equation}
     \cuT_\bS(\bDelta) = \alpha \E[\bZ (\bI+\bZ)^{-1} \bDelta \bZ (\bI+\bZ)^{-1}] ,\quad\quad \bZ := \bS^{1/2} \bT\bS^{1/2}\, .
 \end{equation}
 Here the expectation is with respect to the pushforward of the probability measure $\nu$.
We will first obtain a bound on the norm $\|\cuT_\bS^p\|_{\fro\to\fro}$ in order to bound the spectral radius of $\cuT_\bS$.

To do so, fix $\bDelta \in\sfM_{k}(\C)_{\textrm{sa}}$ and let
\begin{equation}
    \bT_1,\dots,\bT_p,\tilde\bT_1,\dots,\tilde\bT_p
\end{equation}
i.i.d. copies of $\bT$ from $\nu$, and denote
\begin{equation}
   \bA_i := \bZ_i (\bI+\bZ_i)^{-1} \quad \bZ_i := \bS^{1/2}\bT_i\bS^{1/2}.
\end{equation}
Similarly define $\tilde \bA_i,\tilde \bZ_i$ using $\tilde\bT_i.$ Then from the definition of $\cuT_\bS$, we see that
\begin{align}
\cuT_\bS^p(\bDelta)  &= \alpha^p\E\left[\bA_p\cdots \bA_1 \bDelta \bA_1 \cdots\bA_p  \right]
\end{align}
Using  the fact that $2\Tr(\bDelta\bQ\bDelta\bQ^{*})\le
\Tr(\bDelta^2\bQ\bQ^{*})+\Tr(\bDelta^2\bQ\bQ^{*})$, we can bound
\begin{align}
   \|\cuT_\bS^p(\bDelta)\|_\fro^2  &=
   \alpha^{2p}\E\left[
   \Tr\left(\bA_p\cdots \bA_1 \bDelta \bA_1 \cdots\bA_p  
   \tilde\bA_p\cdots \tilde\bA_1 \bDelta \tilde\bA_1 \cdots\tilde\bA_p  
   \right)
   \right]\\
   &\le
   \alpha^{2p}
   \E\left[
   \Tr\left(\bDelta \bA_1 \cdots\bA_p  
   \tilde\bA_p\cdots \tilde\bA_1  \tilde\bA_1 \cdots\tilde\bA_p  
   \bA_p\cdots \bA_1 \bDelta
   \right)
   \right].
\end{align}
Since the matrix inside the trace is PSD, for any $\bB\succ\bzero,$ we have
\begin{align}
   &\E\left[
   \Tr\left(\bDelta \bA_1 \cdots\bA_p  
   \tilde\bA_p\cdots \tilde\bA_1\bB^{1/2}\bB^{-1}\bB^{1/2}  \tilde\bA_1 \cdots\tilde\bA_p  
   \bA_p\cdots \bA_1 \bDelta
   \right)
   \right]\\
   &\le 
   \|\bB^{-1}\|_{\op}
   \E\left[
   \Tr\left(\bDelta \bA_1 \cdots\bA_p  
   \tilde\bA_p\cdots \bB^{1/2} \E[\bB^{-1/2} \tilde\bA_1\bB\tilde\bA_1\bB^{-1/2}]\bB^{1/2} \cdots\tilde\bA_p  
   \bA_p\cdots \bA_1 \bDelta
   \right)
   \right]\\
   &\le\|\bB^{-1}\|_\op
   \|\E[\bB^{-1/2} \tilde\bA_1\bB\tilde\bA_1\bB^{-1/2}]\|_\op
   \E\left[
   \Tr\left(\bDelta \bA_1 \cdots\bA_p  
   \tilde\bA_p\cdots \tilde\bA_2\bB\tilde\bA_2 \cdots\tilde\bA_p  
   \bA_p\cdots \bA_1 \bDelta
   \right)
   \right].
\end{align}
Repeating this argument $2p-1$ many times, we obtain that, for any $\bB\succ\bzero$,
\begin{equation}
    \|\cuT_\bS^p(\bDelta)\|_\fro^2 \le\alpha^{2p} \|\bB^{-1}\|_\op \|\bB\|_\op 
    \|\E[\bB^{-1/2}\bA\bB\bA\bB^{-1/2}]\|_\op^{2p} \|\bDelta\|_\fro^2,
\end{equation}
i.e.,
\begin{align}
\label{eq:F_to_F_bound_composition_A}
    \|\cuT_\bS^p\|_{\fro\to\fro} \le  \alpha^{p} \|\bB\|_\op^{1/2}\|\bB^{-1}\|_\op^{1/2} 
    \|\E[\bB^{-1/2}\bA\bB\bA\bB^{-1/2}]\|_\op^{p},
\end{align}  
where $\bA = \bZ(\bI + \bZ)^{-1}$.
Consequently, if there is a $\bB\succ\bzero$ satisfying $\cuT_\bS(\bB)\prec\bB$, then for some $\delta\in(0,1),$
\begin{equation}
\|\E[\bB^{-1/2} \bA \bB\bA \bB^{-1/2}]\|_\op =
    \frac1{\alpha}\|\bB^{-1/2} \cuT_\bS(\bB) \bB^{-1/2}\|_\op < \frac{\delta}{\alpha}.
\end{equation}
Then since $\cuT_\bS^p$ is self-adjoint,  combining the last bound with 
 with Eq.~\eqref{eq:F_to_F_bound_composition_A}, we obtain that
\begin{equation}
   \|\cuT_\bS\|_{\fro\to\fro} =\lim_{p\to\infty}  \|\cuT_\bS^p\|_{\fro\to\fro}^{1/p} 
   \le \delta \,\lim_{p\to\infty} C^{1/p} < 1\, .
\end{equation}
\end{proof}

\paragraph{The image of $\cU_-$ is contained in $\cP_-$.}
Finally, we return to show that  $\hat\bS_\up{\alpha,\nu}(\cU_-)\subseteq\cP_-.$
\begin{lemma}
\label{lemma:image_U_in_P}
Fix  $k \ge 1$ integer,
$\alpha \in (0,\infty)$, and $\nu\in\cuP_0(\sa{k}{\C})$.
Then for any $\bU \in\cU_-$, we have 
\begin{equation}
    \hat\bS_\up{\nu;\alpha}(\bU) \in \cP_-.
\end{equation}
\end{lemma}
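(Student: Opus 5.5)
The plan is to obtain positivity of $\cuD_\bS\bK$ at $\bS=\hat\bS(\bU)$ by differentiating the identity $\bK\circ\hat\bS=\mathrm{id}$ on $\cU_-$. This transfers the positivity of $\cuD\hat\bS$, already established in Lemma~\ref{lemma:regularity_free_ST}, to $\cuD_\bS\bK$. Throughout I suppress $\alpha,\nu$ and fix $\bU\in\cU_-$ with $\bS:=\hat\bS(\bU)$.

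\emph{Step 1 (the domain condition).} By Lemma~\ref{lemma:image_Sstar_D_general}, $\hat\bS(\bU')\in\cD_0$ for every $\bU'\in\cU_-$. In particular $\bS\succ\bzero$ and $\bS^{-1}+\bT\succ\bzero$ for all $\bT\in\supp(\nu)$, which are the first two requirements in Eq.~\eqref{eq:P_def}. It therefore only remains to show $\cuD_\bS\bK(\bS)[\cdot]\succ0$.

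\emph{Step 2 (the inverse-function identity).} The set $\cU_-$ is open in $\sa{k}{\C}$: if $w_k-\bU\otimes\id_\cA\succeq c\,\id$, then small self-adjoint perturbations of $\bU$ preserve strict positivity. By Step 1 and Eq.~\eqref{eq:K_S_U_is_U_on_D0}, $\bK(\hat\bS(\bU'))=\bU'$ for all $\bU'$ in a neighborhood of $\bU$ in $\sa{k}{\C}$. The map $\bK$ is smooth (indeed real-analytic) on the open set $\cD_0$. This holds because $\bS^{-1}$ and $(\bI+\bS\bT)^{-1}=\bS^{-1/2}(\bI+\bS^{1/2}\bT\bS^{1/2})^{-1}\bS^{1/2}$ are uniformly well-conditioned over the compact $\supp(\nu)$ whenever $\inf_{\bT}\lambda_{\min}(\bS^{-1}+\bT)>0$, so differentiation under the integral is justified. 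The map $\hat\bS$ is analytic near $\bU$ by Lemma~\ref{lemma:regularity_free_ST}. The chain rule then gives
\begin{equation}
\cuD_\bS\bK(\bS)\big[\cuD\hat\bS(\bU)[\bDelta]\big]=\bDelta\qquad\forall\,\bDelta\in\sa{k}{\C}.
\end{equation}
Since $\cuD\hat\bS(\bU)$ is non-singular on the finite-dimensional space $\sa{k}{\C}$, we conclude $\cuD_\bS\bK(\bS)=\big(\cuD\hat\bS(\bU)\big)^{-1}$.

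\emph{Step 3 (positivity).} I will check that $\cuD\hat\bS(\bU)$ is self-adjoint with respect to the trace inner product. By Eq.~\eqref{eq:DhatS} and traciality of $\Tr\otimes\tau$,
\begin{equation}
\langle\bGamma,\cuD\hat\bS(\bU)[\bDelta]\rangle=\tfrac1\alpha(\Tr\otimes\tau)\big((\bGamma\otimes\id)r_k(\bDelta\otimes\id)r_k\big),
\end{equation}
and this expression is symmetric in $(\bGamma,\bDelta)$ and real. Lemma~\ref{lemma:regularity_free_ST} shows that this operator is strictly positive definite. The inverse of a self-adjoint positive definite operator on a finite-dimensional space is again self-adjoint and positive definite. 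Hence $\langle\bDelta,\cuD_\bS\bK(\bS)[\bDelta]\rangle>0$ for $\bDelta\ne\bzero$, and so $\bS\in\cP_-$.

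The main obstacle is justifying Step 2 carefully. The identity $\bK\circ\hat\bS=\mathrm{id}$ must hold on a full open neighborhood of $\bU$, not merely at the point $\bU$, and this is precisely what Lemma~\ref{lemma:image_Sstar_D_general} supplies for all of $\cU_-$. One must also confirm that $\bK$ is differentiable at points of $\cD_0$ with the derivative formula stated before Theorem~\ref{thm:edge}. Everything else is linear algebra.
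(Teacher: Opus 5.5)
Your proof is correct, but it takes a different route from the paper. The paper approaches $\bU$ from the upper half-plane along $\bU+iy\bI$ and takes the imaginary part of $\bK(\hat\bS(\bU+iy\bI))=\bU+iy\bI$. It then divides by $y$ and passes to the limit using $\Im\hat\bS(\bU+iy\bI)/y\to\bB_0:=\alpha^{-1}(\bI_k\otimes\tau)(r_k(\bU)^2)$. This produces a single inequality $\cuT_{\hat\bS}(\bB')\prec\bB'$ for the explicit witness $\bB'=\hat\bS^{-1/2}\bB_0\hat\bS^{-1/2}$. The paper then needs Lemma~\ref{lemma:set_characterization}, the spectral-radius bound on $\cuT_\bS^p$, to upgrade this to $\cuD_\bS\bK(\hat\bS)\succ0$.

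The two arguments are closely related. Note that $\bB_0=\cuD\hat\bS(\bU)[\bI]$, and the paper's limiting identity is exactly your chain-rule identity at $\bDelta=\bI$, namely $\cuD_\bS\bK(\hat\bS(\bU))[\bB_0]=\bI$.

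By differentiating $\bK\circ\hat\bS=\mathrm{id}$ in all self-adjoint directions, you get the full operator identity $\cuD_\bS\bK(\hat\bS(\bU))=(\cuD\hat\bS(\bU))^{-1}$ on $\sa{k}{\C}$. Positive definiteness is then inherited directly from Lemma~\ref{lemma:regularity_free_ST}, and Lemma~\ref{lemma:set_characterization} is not needed for this step. The cost is that you need $\bK\circ\hat\bS=\mathrm{id}$ on a full neighbourhood and differentiability of $\bK$ on the open set $\cD_0$. You justify both correctly, via Lemma~\ref{lemma:image_Sstar_D_general}, openness of $\cU_-$, and uniform invertibility of $\bS^{-1}+\bT$ over the compact $\supp(\nu)$. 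That regularity of $\bK$ is needed later anyway, when $\bK$ is used as a local diffeomorphism in the proof of Theorem~\ref{thm:K_transform_real}. The paper's version, in exchange, stays with limits along $y\downarrow0$ and yields an explicit witness for the form in Eq.~\eqref{eq:P_def_with_B}.

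Minor points:
\begin{itemize}
\item The correct conjugation is $(\bI+\bS\bT)^{-1}=\bS^{1/2}(\bI+\bS^{1/2}\bT\bS^{1/2})^{-1}\bS^{-1/2}$. Your version is reversed and equals $(\bI+\bT\bS)^{-1}$, which is harmless for the argument.
\item The self-adjointness check in Step 3 is unnecessary. For $\bDelta\neq\bzero$, set $\bA:=(\cuD\hat\bS(\bU))^{-1}[\bDelta]$. Then $\langle\bDelta,\cuD_\bS\bK(\bS)[\bDelta]\rangle=\langle\cuD\hat\bS(\bU)[\bA],\bA\rangle>0$.
\item You implicitly use that $\cuD\hat\bS(\bU)$ maps $\sa{k}{\C}$ into itself. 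This holds because $r_k(\bU)$ is self-adjoint and $\tau$ is a $*$-map.
\end{itemize}
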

\begin{proof}
Fix $\bU \in\cU_-$ throughout the proof, and consider for $y \in(0,1)$,
\begin{equation}
    \bZ_y := \bU + i y \bI,
\end{equation}
and let 
\begin{equation}
    \bS_y := \hat\bS(\bZ_y)= \bA_y + i \bB_y \in \bbH_+^k.
\end{equation}
Recall that 
\begin{equation}
    \bK(\bS_y) =  \bZ_y\, .
\end{equation}
Using Lemma~\ref{lemma:inv_on_upper_plane}
and taking the imaginary part of both sides to get
\begin{equation}
  y \bI = \frac1\alpha \bS_y^{-1} \bB_y \bS_y^{*-1} - \E[(\bI + \bT \bS_y)^{-1}\bT \bB_y \bT (\bI + \bS_y^* \bT)^{-1}],
\end{equation}
which, after re-arranging, gives
\begin{equation}
\bB_y^{-1/2}\bS_y \E[(I + \bT \bS_y)^{-1}\bT \bB_y \bT (\bI + \bS_y^* \bT)^{-1}] \bS_y^*\bB_y^{-1/2}
  = \frac1\alpha\bI - y\bB_y^{-1/2} \bS_y \bS_y^*\bB_y^{-1/2}.\label{eq:Rearranged}
\end{equation}

Define $r_y = \big(w_k -(\bZ_y \otimes \id_\cA)\big)^{-1}\in \mat{k}{\C}\otimes\cA$ and note that
\begin{align}
   \alpha \bB_y &= \Im\circ (\bI_k \otimes \tau)\left(r_y\right)
  =  y\cdot(\bI_k \otimes \tau)(r_y r_y^*).
\end{align}
Noting that
\begin{equation}
    \lim_{y\to 0} r_y  = r_0(\bU):= 
    \big(w_k -(\bU \otimes \id_\cA)\big)^{-1}
   \succ \bzero,\quad\quad
   \lim_{y\to 0} \bS_y =  \hat\bS(\bU)\succ\bzero,
\end{equation}
we have $\lim_{y\to 0}\bB_y/y=\bB_0 := \alpha^{-1}(\bI_k \otimes \tau)(r_0(\bU)^2)$ and therefore
\begin{equation}
    \lim_{y\to 0}y\bB_y^{-1/2} \bS_y \bS_y^*\bB_y^{-1/2} =
      \bB_0^{-1/2} \hat\bS(\bU)^2\bB_0^{-1/2} \succ\bzero\, .
\end{equation}
Meanwhile, since $\hat\bS(\bU) \in \cD_0$ by Lemma~\ref{lemma:image_Sstar_D_general}, we have,
as $y\to 0$,
\begin{equation}
\bB_y^{-1/2}\bS_y \E[(\bI + \bT \bS_y)^{-1}\bT \bB_y \bT (\bI + \bS_y^* \bT)^{-1}] \bS_y^*\bB_y^{-1/2} \to
\frac{1}{\alpha} \bB_0^{-1/2}
\hat\bS^{1/2}\cuT_{\hat\bS(\bU)}\big(\hat\bS^{-1/2}\bB_0\hat\bS^{-1/2}\big) \hat\bS^{1/2}\bB_0^{-1/2}\, ,
\end{equation}
where $\hat\bS \equiv \hat\bS(\bU).$
Using the last two displays in Eq.~\eqref{eq:Rearranged}, we obtain that
\begin{equation}
    \cuT_{\hat\bS(\bU)}(\bB')  \prec \bB'
\end{equation}
for the matrix (strictly) positive definite matrix $\bB' := \hat\bS^{-1/2}\bB_0 \hat\bS^{-1/2}$, so that $\hat\bS \in \tilde\cP_-$ of Lemma~\ref{lemma:set_characterization}, which is equal to $\cP_-.$
\end{proof}

\paragraph{Global injectivity of $\bK$ and connectedness of $\cP_-$.}
Note that non-degeneracy of the derivative of $\bK$ on $\cP_-$  guarantees that it is locally injective.
We will next show that $\cP_-$ is an open connected set and that $\bK$ is globally injective on it.
This will allow us to conclude that 
we cannot have $\hat \bS(\cU) \subsetneq \cP_-$ 
unless $\hat \bS$ has an analytic continuation through a boundary point of $\cU_-.$

We start off with the global injectivity in the following lemma.
\begin{lemma}[Injectivity of $\bK$ on $\cP_-$]
\label{lemma:K_injective}
    Assume $\nu$ is compactly supported. 
    Then $\bK$ is \emph{globally injective} on $\cP_-$.
\end{lemma}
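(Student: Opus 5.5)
Suppose $\bS_0,\bS_1\in\cP_-$ satisfy $\bK(\bS_0)=\bK(\bS_1)$. I will derive a contradiction by integrating along the segment between them and using a monotonicity identity for $\bK$. Set $\bDelta:=\bS_1-\bS_0\neq\bzero$ and $\bS_t:=\bS_0+t\bDelta$.

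The algebraic starting point is a finite-difference formula, valid for any $\bS,\bS'\in\cD_0$:
\begin{align}
\bK(\bS')-\bK(\bS) = \frac1\alpha \bS^{-1}(\bS'-\bS)\bS'^{-1} - \int \bT(\bI+\bS\bT)^{-1}(\bS'-\bS)(\bI+\bT\bS')^{-1}\bT\,\nu(\de\bT).
\end{align}
It follows from the resolvent identities $\bS^{-1}-\bS'^{-1}=\bS^{-1}(\bS'-\bS)\bS'^{-1}$ and
\begin{align}
\bT(\bI+\bS'\bT)^{-1}-\bT(\bI+\bS\bT)^{-1}=-\bT(\bI+\bS\bT)^{-1}(\bS'-\bS)\bT(\bI+\bS'\bT)^{-1}.
\end{align}
This is not symmetric in $\bS,\bS'$, so I will take the inner product with $\bDelta$ and symmetrize. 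Rewriting in the variables $\bA=\bS^{-1/2}\bDelta\bS'^{-1/2}$, as in Eq.~\eqref{eq:DK_to_cuT_norm}, I expect
\begin{align}
\langle \bDelta,\bK(\bS')-\bK(\bS)\rangle = \tfrac1\alpha\big(\|\bA\|_\fro^2-\langle\cdot,\cdot\rangle_{\cuT}\big),
\end{align}
where the second term is a bilinear form built from $\bS^{1/2}\bT\bS^{1/2}$ and $\bS'^{1/2}\bT\bS'^{1/2}$. A Cauchy--Schwarz bound would then reduce this to $\|\cuT_\bS\|<1$ and $\|\cuT_{\bS'}\|<1$. Getting this mixed two-point inequality right is the first candidate route, but it is delicate.

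The more robust route is the path integral
\begin{align}
\langle\bDelta,\bK(\bS_1)-\bK(\bS_0)\rangle=\int_0^1\langle\bDelta,\cuD_\bS\bK(\bS_t)[\bDelta]\rangle\,\de t.
\end{align}
This requires the whole segment to lie in $\cD_0$ with $\cuD\bK(\bS_t)\succ0$, i.e.\ it requires convexity of $\cP_-$, and I expect this to be the main obstacle. Membership in $\cD_0$ is easy: $\bS^{-1}+\bT\succ0$ is equivalent to $\bI+\bS^{1/2}\bT\bS^{1/2}\succ0$. In the $\bR=\bS^{-1}$ variable, $\bR\mapsto\bR+\bT\succ0$ is a convex condition, so I may prefer to parametrize by $\bR$ rather than by $\bS$.

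For the derivative condition, I would reparametrize with $\bR=\bS^{-1}$ and $\bM_\bT=(\bR+\bT)^{-1}$, and write $\tilde\bK(\bR)=\int(\bI-\bR\bM_\bT)\,\nu(\de\bT)-\alpha^{-1}\bR$; one checks that $\bT(\bI+\bS\bT)^{-1}=\bI-\bR(\bR+\bT)^{-1}$. Positivity of $\cuD\bK$ should then become a condition of the form $\alpha\int \bR\bM_\bT\bE\bM_\bT\bR\ \text{(quadratic form)}<\|\cdot\|^2$. This quadratic form is jointly convex in $(\bR+\bT)$ by the matrix convexity of $(\bX,\bY)\mapsto \bY^*\bX^{-1}\bY$, and that convexity should yield convexity of the sublevel set $\cP_-$ in the $\bR$-coordinates.

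If convexity fails, the fallback is a degree or monodromy argument. By Theorem~\ref{thm:K_transform_real}'s preceding lemmas, $\hat\bS$ is a local inverse of $\bK$ on $\hat\bS(\cU_-)\subseteq\cP_-$, and $\bK$ is a local diffeomorphism on $\cP_-$. I would show that $\bK$ is proper from $\cP_-$ onto its image. Near $\partial\cD_0$, $\|\bK\|$ blows up in the discrete case, or one approximates via Lemma~\ref{lemma:discrete_approx}. Near the boundary where $\cuD\bK$ degenerates, one uses connectedness. Then the covering map onto the simply connected set $\cU_-$ (convex, since it is defined by an operator inequality linear in $\bU$) is injective.

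Once the path integral is available, the integrand satisfies $\langle\bDelta,\cuD_\bS\bK(\bS_t)[\bDelta]\rangle>0$ for $\bDelta\neq\bzero$. This gives $\langle\bDelta,\bK(\bS_1)-\bK(\bS_0)\rangle>0$, contradicting $\bK(\bS_0)=\bK(\bS_1)$, so $\bK$ is injective on $\cP_-$.
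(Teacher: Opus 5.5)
\paragraph{There is a genuine gap in your main route.} The straight-segment argument needs $\bS_t=\bS_0+t\bDelta$ to stay in $\cP_-$. But $\cP_-$ is not convex in the $\bS$-coordinates once $\nu$ charges indefinite matrices, which is exactly the case this paper targets. Already $\cD_0$ fails. Take $k=2$, $\nu=\delta_{\bT}$ with $\bT=\Diag(-1,1)$, and $\bS_c=\begin{pmatrix}3/2&c\\ c&1\end{pmatrix}$ with $c^2<3/2$.
\begin{itemize}
\item A direct computation gives $\det(\bI+\bS_c\bT)=c^2-1$.
\item Since $\bS_c^{1/2}\bT\bS_c^{1/2}$ has one negative and one positive eigenvalue, $\bS_c^{-1}+\bT\succ\bzero$ iff $c^2>1$.
\item Hence $\bS_{\pm 11/10}\in\cD_0$. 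For $\alpha$ small both lie in $\cP_-$, because the term $\alpha^{-1}\bS^{-1}\bDelta\bS^{-1}$ dominates $\cuD_\bS\bK$.
\item The segment joining them passes through $c=\pm 1$, where $\bK$ has a pole. Its midpoint $\Diag(3/2,1)$ has $\bS^{-1}+\bT=\Diag(-1/3,2)$.
\end{itemize}

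Passing to $\bR=\bS^{-1}$ does not repair this.
\begin{itemize}
\item A segment in $\bR$ is a different path. Along it the relevant derivative $\bE\mapsto\int\bT(\bR+\bT)^{-1}\bE(\bR+\bT)^{-1}\bT\,\nu(\de\bT)-\alpha^{-1}\bE$ is not self-adjoint. The reason is that $\bK$ is the gradient of $\int\log\det(\bI+\bT\bS)\,\nu(\de\bT)-\alpha^{-1}\log\det\bS$ in $\bS$, not in $\bR$.
\item This derivative is similar to $\alpha^{-1}(\cuT_\bS-\id)$. So membership in $\cP_-$ controls its spectrum, but not its quadratic form in the fixed direction $\bR_1-\bR_0$.
\item Your convexity heuristic is unfounded, since $(\bR+\bT)^{-1}$ enters twice (compare $\bX\mapsto\bX^{-2}$, which is not operator convex).
\item Your identity should read $\bT(\bI+\bS\bT)^{-1}=\bR-\bR(\bR+\bT)^{-1}\bR$.
\end{itemize}

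The covering-map fallback is circular. The inclusion $\bK(\cP_-)\subseteq\cU_-$ is what Theorem~\ref{thm:K_transform_real} derives \emph{from} this lemma. Properness at the part of $\partial\cP_-$ where $\cuD_\bS\bK$ degenerates is also not addressed.

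\paragraph{What is missing is the idea you raise and then set aside.} The paper uses the exact two-point identity $\alpha\bS^{1/2}(\bK(\bS)-\bK(\tilde\bS))\tilde\bS^{1/2}=(\bI-\cuG_0)[\bS^{-1/2}(\bS-\tilde\bS)\tilde\bS^{-1/2}]$. Here $\cuG_0[\bDelta]=\alpha\int\bZ(\bI+\bZ)^{-1}\bDelta\tilde\bZ(\bI+\tilde\bZ)^{-1}\nu(\de\bT)$, with $\bZ=\bS^{1/2}\bT\bS^{1/2}$ and $\tilde\bZ=\tilde\bS^{1/2}\bT\tilde\bS^{1/2}$.

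It then proves only that $\bI-\cuG_0$ is \emph{invertible}, not that a quadratic form has a sign. It does so by showing $\|\cuG_0^p\|_{\fro\to\fro}\le C\delta^p$ with $\delta<1$. Cauchy--Schwarz on $\|\cuG_0^p(\bDelta)\|_\fro^2$ separates the $\bS$-chain from the $\tilde\bS$-chain. Each chain is then peeled one factor at a time, using weights $\bB,\tilde\bB\succ\bzero$ with $\cuT_\bS(\bB)\prec\bB$ and $\cuT_{\tilde\bS}(\tilde\bB)\prec\tilde\bB$ from Lemma~\ref{lemma:set_characterization}.

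Working with powers is essential. A one-shot Cauchy--Schwarz on $\langle\bA,\cuG_0\bA\rangle$ gives a bound $\|\cuT_\bS(\bI)\|_\op^{1/2}\|\cuT_{\tilde\bS}(\bI)\|_\op^{1/2}$. After inserting weights it gives instead a factor involving the condition numbers of $\bB,\tilde\bB$. Neither need be below $1$ on $\cP_-$; the constant $C$ only disappears in the $p$-th root. The argument uses nothing beyond the pointwise certificates at $\bS$ and $\tilde\bS$, so no path, convexity, or global topology is required.
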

\begin{proof}
Once again, we use the expectation notation. 
Fix $\bS,\tilde\bS\in\cP_-$
and define the operator 
\begin{equation}
   \cuG_{0}[\bDelta] := \alpha \E[\bZ(\bI+\bZ)^{-1}\bDelta \tilde\bZ(\bI+\tilde\bZ)^{-1}]
   \quad\quad\textrm{where}\quad\quad
    \bZ := \bS^{1/2} \bT\bS^{1/2},\;\tilde\bZ :=\tilde\bS^{1/2} \bT\tilde\bS^{1/2},
\end{equation}
for $\bDelta\in\sfM_k(\C).$

A straightforward calculation shows that this relates to the difference $\bK(\bS) -\bK(\tilde\bS)$ by
\begin{align}
    \alpha \bS^{1/2} \left(\bK(\bS) - \bK(\tilde\bS)\right)\tilde\bS^{1/2} = 
   \left(\bI - \cuG_0\right)[\bDelta_0],\quad\quad \bDelta_0 := \bS^{-1/2}(\bS -\tilde\bS)\tilde\bS^{-1/2}.
\end{align}
Hence, to prove injectivity, since $\bS,\tilde\bS\succ\bzero,$
it is sufficient to show that $\bI -\cuG_0$ is invertible. 
To do so, we will follow an argument similar to the one in Lemma~\ref{lemma:set_characterization} and bound the norm of compositions of $\cuG_0^p.$

Let $\bT_1,\dots,\bT_{2p}$ be distributed with respect to the product $\nu^{\otimes 2p}$.
Set for $i\in[2p],$
\begin{equation}
    \bA_{i} := \bZ_i (\bI+\bZ_i)^{-1},\quad\bZ_i :=  \bS^{1/2}\bT_i\bS^{1/2},\quad\quad\textrm{and}\quad\quad
     \tilde\bA_{i} := \tilde\bZ_i (\bI+\tilde\bZ_i)^{-1},\quad\tilde\bZ_i :=  \tilde\bS^{1/2}\bT_i\tilde\bS^{1/2}.
\end{equation}
Then for $\bDelta\in\sfM_{k}(\C)$, we have
\begin{align} 
\|\cuG_0^p(\bDelta)\|_\fro^2 
&= \alpha^{2p}\E[\Tr( 
\tilde\bA_{p} \dots \tilde\bA_1
\bDelta^*
\bA_1\dots\bA_p
\bA_{p+1} \dots \bA_{2p} 
\bDelta
\tilde\bA_{2p} 
\dots
\tilde\bA_{p+1}
)]\\
&= \alpha^{2p}\E[\Tr(
\bA_1 \dots \bA_{2p} 
\bDelta
\tilde\bA_{2p} 
\dots
\tilde\bA_1
\bDelta^*
)]\\
&\le \alpha^{2p}
{\underbrace{\E[\Tr( 
\bDelta^* \bA_{2p}\dots\bA_1 \bA_1\dots \bA_{2p} \bDelta
)]}_{T_1}}^{1/2}
{\underbrace{\E[\Tr( 
\bDelta \tilde\bA_1\dots\tilde\bA_{2p}\tilde\bA_{2p} \dots \tilde\bA_1\bDelta^{*}
)]}_{T_2}}^{1/2}.
\end{align}

Now since $\bS,\tilde\bS \in\cP_-$ which is equal to $\tilde\cP_-$ by Lemma~\ref{lemma:set_characterization}, there exist $\bB,\tilde\bB\succ\bzero$ so that
$\cuT_{\bS}[\bB] \prec \bB, \cuT_{\tilde\bS}[\tilde\bB]\prec \tilde\bB$.
Then we bound $T_1$ as 
\begin{align}
    T_1 &\le \|\bB^{-1}\|_\op  \E[ \Tr(\bDelta^{*}\bA_{2p} \dots  
    \bA_{2}\bB^{1/2}\E[\bB^{-1/2}\bA_{1}\bB\bA_1 \bB^{-1/2}]  \bB^{1/2}\bA_2 \dots \bA_{2p}\bDelta) ]\\
    &\le
    \|\bB^{-1}\|_\op
    \|\E[\bB^{-1/2}\bA_{1}\bB\bA_1 \bB^{-1/2}]\|_\op
    \E[ \Tr(\bDelta^{*}\bA_{2p} \dots  \bA_2\bB\bA_2 \dots \bA_{2p}\bDelta)]
\end{align}
which after iterating yields
\begin{equation}
    T_1 \le \alpha^{-p}\|\bB^{-1}\|_\op 
    \|\bB\|_\op  \|\bB^{-1/2}\cuT_\bS(\bB)\bB^{-1/2}\|_\op^p \Tr(\bDelta^{*}\bDelta)
\end{equation}
and a similar computation shows 
\begin{equation}
    T_2 \le \alpha^{-p}\|\tilde\bB^{-1}\|_\op 
    \|\tilde\bB\|_\op  \|\tilde\bB^{-1/2}\cuT_\bS(\tilde\bB)\tilde\bB^{-1/2}\|_\op^p \Tr(\bDelta^{*}\bDelta),
\end{equation}
so that for some $C>0$ and $\delta\in(0,1),$
\begin{equation}
    \|\cuG_0^{p}\|_{\fro\to\fro} \le 
    C  \delta^p.
\end{equation}
This shows that the Nuemann series $\sum_{p=0}^\infty  \cuG_{0}^p$ converges in $\|\,\cdot\,\|_{\fro\to\fro}$ norm, and thereby $\bI -\cuG_0$ is invertible
\end{proof}

Now we move on to showing that the connectedness of $\cP_-.$
\begin{lemma}
\label{lemma:connectedness}
Assume $\nu$ is compactly supported.
    Then the set $\cP_-$ is a connected open set.
\end{lemma}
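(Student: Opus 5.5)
We show the two properties separately; connectedness is the substantive part.

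\emph{Openness.} Use the description $\cP_- = \{\bS\succ\bzero : \|\cuT_\bS\|_{\fro\to\fro}<1\}\cap\cD_0$. The set $\{\bS\succ\bzero\}$ is open. The set $\cD_0$ is open because $\supp(\nu)$ is compact. Indeed, if $\lambda_{\min}(\bS^{-1}+\bT)\ge\delta>0$ for all $\bT\in\supp(\nu)$, then a small perturbation of $\bS$ changes $\bS^{-1}$ by less than $\delta/2$ in operator norm, uniformly in $\bT$. On $\cD_0$ the map $\bS\mapsto\cuT_\bS$ is continuous in operator norm. This follows because the integrand $\bZ(\bI+\bZ)^{-1}=\bS^{1/2}\bT(\bI+\bS\bT)^{-1}\bS^{-1/2}$ is jointly continuous and uniformly bounded on a neighbourhood of $\bS$ times $\supp(\nu)$. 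Hence the strict inequality $\|\cuT_\bS\|<1$ persists nearby, and $\cP_-$ is open.

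\emph{Connectedness.} The plan is to show that $\cP_-$ is star-shaped with respect to scaling toward the origin. Concretely, if $\bS\in\cP_-$ then $t\bS\in\cP_-$ for all $t\in(0,1]$, and for small $t$ the points $t\bS$ all lie in a fixed convex neighbourhood of $\bzero$ intersected with $\cP_-$. Two points can then be joined by scaling each down and connecting them inside that small set.

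First, $\cD_0$ is preserved under scaling. For $t\in(0,1]$ we have $(t\bS)^{-1}+\bT = t^{-1}\bS^{-1}+\bT = t^{-1}(\bS^{-1}+\bT)+(1-t^{-1})\bS^{-1}$. Since $\bS^{-1}+\bT\succ\bzero$, the eigenvalues of $\bS^{1/2}\bT\bS^{1/2}$ exceed $-1$. The matrix $t\bZ$ with $\bZ=\bS^{1/2}\bT\bS^{1/2}$ then has eigenvalues exceeding $-t\ge -1$, so $\bI+t\bZ\succ\bzero$.

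Next, the contraction condition should be preserved under scaling. Here $\bZ_t=t\bZ$, and we must compare $f_t(\bZ)=t\bZ(\bI+t\bZ)^{-1}$ with $f_1(\bZ)$. On each eigenvalue $z>-1$, the scalar map $z\mapsto tz/(1+tz)$ satisfies $|tz/(1+tz)|\le|z/(1+z)|$. For $z\ge0$ this holds because $x/(1+x)$ is increasing; for $z\in(-1,0)$ it holds because $|x|/(1-|x|)$ is increasing in $|x|$. This suggests working with the $\bB$-characterization of Lemma~\ref{lemma:set_characterization}. However, a sandwich $\bA_t\bB\bA_t$ is not operator-monotone in $|\bA|$ in general.

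So the cleaner route is through the Fr\"obenius norm. One would need $\langle\bDelta,\cuT_{t\bS}\bDelta\rangle\le\langle\bDelta,\cuT_\bS\bDelta\rangle$, and this fails in general for the same reason. I would therefore use a different characterization. Lemma~\ref{lemma:image_U_in_P} and Theorem-level facts give $\hat\bS(\cU_-)\subseteq\cP_-$. Combined with the injectivity of Lemma~\ref{lemma:K_injective} and the inverse function theorem, this shows that $\hat\bS(\cU_-)$ is an open subset of $\cP_-$ on which $\bK$ is inverted.

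The alternative, if scaling fails, is to show that the connected component containing $\hat\bS(\cU_-)$ is all of $\cP_-$. Note that $\cU_-$ is convex, hence connected. Given $\bS\in\cP_-$, consider the curve $t\mapsto t\bS$ and verify directly that $\|\cuT_{t\bS}\|<1$. The key estimate is $\cuT_{t\bS}(\bB)\preceq$ something dominated by $\cuT_\bS(\bB)$. I would try to prove this for $\bB$ commuting appropriately, or via the scalar bound above applied after diagonalizing each $\bZ$.

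\emph{Main obstacle.} The main obstacle is this monotonicity of $t\mapsto\|\cuT_{t\bS}\|$. If it resists a direct attack, I would instead show that $\cP_-\cap\{\lambda_{\max}(\bS)<\eta\}$ is convex for small $\eta$, together with a continuation argument: along the segment $t\bS$, the quantity $\bI+t\bZ$ stays invertible, so $\|\cuT_{t\bS}\|$ is continuous in $t$. The remaining step would be to exclude $\|\cuT_{t\bS}\|=1$ at some $t<1$, using the determinant of $\cuD\bK$ and injectivity.
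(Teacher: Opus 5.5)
Your openness argument is fine. The connectedness part has a genuine gap, and your chosen route cannot be completed: the claim that $\bS\in\cP_-$ implies $t\bS\in\cP_-$ for $t\in(0,1]$ is false in general for $k\ge 2$.

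The obstruction you spotted is real. Fix $\bT$, let $\Delta_{ij}$ be the entries of $\bDelta$ in the eigenbasis of $\bZ$, and let $f(z)=z/(1+z)$. Then $\langle\bDelta,\cuT_\bS(\bDelta)\rangle=\alpha\E\sum_{i,j}f(z_i)f(z_j)|\Delta_{ij}|^2$. Pairs with $z_iz_j<0$ contribute negatively, and these contributions shrink in size under scaling, so $\lambda_{\max}(\cuT_{t\bS})$ can increase as $t$ decreases.

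Here is a concrete example. Take $k=2$ and $\nu=\frac12(\delta_{\bT_1}+\delta_{\bT_2})$, with $\bT_1=\mathrm{diag}(99,-1/6)$, $\bT_2=\bR\bT_1\bR^{-1}$, and $\bR$ the rotation by $\pi/4$. Let $c,d$ be the half-sum and half-difference of $f(99t)$ and $f(-t/6)$. A direct computation in the Pauli basis gives $\lambda_{\max}(\cuT_{t\bI})=\frac{\alpha}{2}\big(2c^2+d^2+d\sqrt{8c^2+d^2}\big)$. This is $\approx 0.7096\,\alpha$ at $t=1$ and $\approx 0.7104\,\alpha$ at $t=0.8$. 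For $\alpha=1.4085$ you get $\bI\in\cP_-$ but $0.8\,\bI\notin\cP_-$, even though both lie in $\cD_0$.

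Your fallbacks do not help either. Since the ray genuinely leaves $\cP_-$, no argument via $\det\cuD_\bS\bK$ and injectivity can exclude $\lambda_{\max}(\cuT_{t\bS})=1$; injectivity of $\bK$ on $\cP_-$ says nothing about leaving $\cP_-$. And ``the component containing $\hat\bS(\cU_-)$ is all of $\cP_-$'' restates the lemma rather than proving it.

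The missing idea is to let the witness from Lemma~\ref{lemma:set_characterization} choose the direction of the path. Given $\bS\in\cP_-$ and $\bB\succ\bzero$ with $\cuT_\bS(\bB)\prec\bB$, set $\bY=\bS^{-1}$ and $\bB_0=\bS^{-1/2}\bB\bS^{-1/2}$. The condition becomes $\alpha\E[\bT(\bY+\bT)^{-1}\bB_0(\bY+\bT)^{-1}\bT]\prec\bB_0$.

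Now move along $\bY_t=\bY+t\bB_0$, i.e.\ $\bS_t=\bS^{1/2}(\bI+t\bB)^{-1}\bS^{1/2}$. With $\bQ=\bB_0^{-1/2}(\bY+\bT)\bB_0^{-1/2}\succ\bzero$, the middle factor is $\bB_0^{-1/2}(\bQ+t\bI)^{-2}\bB_0^{-1/2}$. Since $\bQ$ commutes with $\bI$, $(\bQ+t\bI)^2\succeq\bQ^2$, hence $(\bQ+t\bI)^{-2}\preceq\bQ^{-2}$. So the \emph{same} $\bB_0$ remains a witness for every $t\ge0$, which is exactly how the non-monotonicity of sandwiches is avoided.

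For large $t$, $\bY_t$ enters the convex set $\{\bY\succ\bzero:\bY+\bT\succ C\bI\ \forall\bT\in\supp(\nu)\}$ with $C^2>\alpha\lambda_{\max}(\E[\bT^2])$. Every point of this set satisfies the condition with witness $\bB_0=\bI$, and convexity then joins any two points.

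Your scaling path $\bS/(1+t)$ is exactly the special case $\bB=\bI$ of this construction. Your scalar inequality does prove that case, because $\cuT_{t\bS}(\bI)=\alpha\E[f(t\bZ)^2]$ is a function of $\bZ$ alone. Scaling is only guaranteed to work when $\bI$ itself is a witness.
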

\begin{proof}
Recall first the characterization $\tilde\cP_-$ from
Lemma~\ref{lemma:set_characterization}.
Observe that for a given $\bS \in\cD_0$, the condition
    $\cuT_\bS(\bB) \prec\bB$  
is equivalent to 
\begin{equation}
\alpha\E[\bT(\bS^{-1} +\bT)^{-1} \bB_0 (\bS^{-1} + \bT)^{-1} \bT ] \prec \bB_0 \quad\quad \bB_0 := \bS^{-1/2} \bB \bS^{-1/2}.
\end{equation}
So it is sufficient to show the connectedness of the set
\begin{equation}
    \label{eq:alt_connectedness_set}
    \cG := \{\bY \succ\bzero : \bY + \bT \succ\bzero \; \;\forall\bT\in\supp(\nu), \;\; 
    \exists \bB_0 \succ\bzero \;\textrm{s.t.}\;
  \cuF_0(\bY)[\bB_0]
  \prec\bB_0
    \}
\end{equation}
where 
\begin{equation}
 \cuF_0(\bY)[\bB_0] :=   
  \alpha\E[\bT(\bY +\bT)^{-1} \bB_0 (\bY + \bT)^{-1}\bT].
\end{equation}

First, fix
$\bY\in\cG$ and let $\bB_0\succ\bzero$ satsify $\cuF_0(\bY)[\bB_0] \prec \bB_0$. 
We will show that for all $t\ge0,$ the path
\begin{equation}
    \bY_t := \bY + t \bB_0 
\end{equation}
remains in $\cG.$
It is clear that $\bY_t + \bT \succ\bzero$ for $\bT\in\supp(\nu)$.
For the remaining condition, set
\begin{equation}
\bQ \equiv \bQ(\bT) := \bB_0^{-1/2} (\bY + \bT) \bB_0^{-1/2}.
\end{equation}
Since $\bY\in\cG$, $\bQ(\bT) \succ\bzero$ for $\bT\in\supp(\nu)$ and so $(\bQ + t \bI)^2 = \bQ^2 + 2t\bQ + t^2 \bI \succ \bQ^2$ for all $t\ge 0$. Recalling that the inverse is matrix monotone on PSD matrices we conclude that for all
\begin{equation}
\label{eq:ordering_key}
\bT\bB_0^{-1/2}(\bQ(\bT) + t\bI)^{-2}
\bB_0^{-1/2}\bT
\preceq 
\bT\bB_0^{-1/2}
\bQ(\bT)^{-2}
\bB_0^{-1/2}\bT
,
\end{equation}
almost-surely. 
Now observe that for all $t\ge0$,
\begin{align}
\E[\bT\bB_0^{-1/2}(\bQ(\bT) +t\bI)^{-2} \bB_0^{-1/2}\bT]
&= 
\E[\bT (\bY +\bT + t\bB_0)^{-1} \bB_0
(\bY +\bT + t\bB_0)^{-1}\bT]\\
&=
\E[\bT (\bY_t + \bT )^{-1} \bB_0
(\bY_t +\bT )^{-1}\bT]\\
&=\alpha^{-1}\cuF_0(\bY_t)[\bB_0],
\end{align}
 Eq.~\eqref{eq:ordering_key} then allows us to conclude that
    $\cuF_0(\bY_t)[\bB_0] \preceq \cuF_0(\bY)[\bB_0] \prec \bB_0$
so that $\bY_t \in \cG$.

Now let $C_\alpha> 0$ be chosen sufficiently large so that
\begin{eqnarray}
\alpha \E[\bT^2]     \le C_\alpha^2.
\end{eqnarray}
Consider the set
\begin{equation}
   \cG_0 := \{\bY \succ\bzero : \bY + \bT \succ C_\alpha \bI \;\; \forall\bT\in\supp(\nu)\}.
\end{equation}
Clearly, if $\bY \in\cG_0$, then $\bY + \bT \succ\bzero$ for $\bT\in\supp(\nu)$.
Further, for all $\bY \in\cG_0$ and $\bT\in\supp(\nu)$, 
we have
\begin{eqnarray}
    \cuF_0(\bY)[\bI] = 
    \alpha \E\big[\bT (\bY + \bT)^{-2} \bT\big] 
    \prec  \frac{\alpha}{C_\alpha^2} \E[\bT^2] \bI \preceq\bI,
\end{eqnarray}
so that $\bT \in\cG$.  Hence, $\cG_0$ is a convex subset of $\cG$.
Now if $\bY\in\cG$, with $\bB_0\succ \bzero$ 
satisfying $\cuF_0(\bY)[\bB_0] \prec \bB_0$, 
since $\bT$ is compactly supported, there exists $t>0$ large enough so that $\bY + t\bB_0 \in\cG_0$.

%
\end{proof}
\begin{proof}[Proof of Theorem~\ref{thm:K_transform_real}]
Let $\partial\cU_-$ denote the boundary
\begin{equation}
  \partial\cU_- = \big\{\bU \in\sa{k}{\C} :   
    w_k - (\bU \otimes\id)\succeq 0,\; 0\in\spec(w_k - (\bU\otimes \id))
    \big\}.
\end{equation}

Assume by contradiction that $\cU_- \neq \bK(\cP_-)$. Since 
$\hat\bS(\cU_-)\subseteq\cP_-$ by Lemma~\ref{lemma:image_U_in_P}, 
and $\bK(\hat\bS(\bU))=\bU$ for all $\bU\in\cU_-$ by Lemma \ref{lemma:regularity_free_ST}
it follows that $\cU_-\subseteq\bK(\cP_-)$.

The set $\cP_-$ is connected by Lemma~\ref{lemma:connectedness}, and hence so is $\bK(\cP_-)$.
Furthermore, 
both $\cU_-$ and $\bK(\cP_-)$ are open; the latter is open because $\cP_-$ is open and 
$\bK$ is a local diffeomorphism. As a result, we conclude that there must exist a point $\bU_0\in\bK(\cP_-)\cap\partial\cU_-$. 

%

Now by Lemma~\ref{lemma:K_injective}, (and the fact that $\cuD_\bS \bK(\bS)[\,\cdot\,]\succ\bzero$
for all $\bS\in\cP_-$), $\bK$ has an analytic inverse 
$\bK^{-1}$ on $\bK(\cP_-)$ and by Lemma~\ref{lemma:image_U_in_P} we must have
\begin{equation}
\label{eq:S_is_K_inv}
    \bU\in\cU_- \;\;\Rightarrow\;\; \hat\bS(\bU) = \bK^{-1}(\bU).
\end{equation}
Therefore, $\hat\bS$ can be continued analytically in a neighborhood of $\bU_0$.
But then the Stieltjes transform
\begin{equation}
   s_0(x) :=  \frac1{\alpha k}\left(\Tr\otimes\tau\right) \left(w_k - ( (\bU + x\bI )\otimes \id)\right)^{-1},\quad\quad
\bU_x := \bU + x\bI,\quad\quad x < 0
\end{equation}
would have an analytic continuation through $0$, which is a point in the support of the measure defined by $w_k -(\bU \otimes \id)$ through tracial moments by Proposition~\ref{prop:spectra_match}; a contradiction.

Finally, the equivalence of the definitions in Eq.~\eqref{eq:P_def} and Eq.~\eqref{eq:P_def_with_B}  
follows from Lemma~\ref{lemma:set_characterization} and 
Eq.~\eqref{eq:DK_to_cuT}.
\end{proof}

\subsection*{Statement of AI use}

The proof of Lemma~\ref{lemma:connectedness} was given 
by GPT 5.5 Pro, prompted by Alexandru Lopotenco with
the parameterization of $\cP_-$ appearing in 
Eq.~\eqref{eq:alt_connectedness_set}. We are grateful to the 
Alexandru for this.

\subsection*{Acknowledgements}

We are grateful to Ramon van Handel for his comments on this manuscript, and 
clarifications regarding their \cite{parmaksiz2025computing}.
This work was partially supported by the NSF through
Award MFAI-2501597.

\bibliographystyle{amsalpha}

\providecommand{\bysame}{\leavevmode\hbox to3em{\hrulefill}\thinspace}
\providecommand{\MR}{\relax\ifhmode\unskip\space\fi MR }
\providecommand{\MRhref}[2]{%
  \href{http://www.ams.org/mathscinet-getitem?mr=#1}{#2}
}
\providecommand{\href}[2]{#2}

\newpage
\appendix
\section{Proofs of Lemma~\ref{lemma:inv_on_upper_plane}}
\label{app:proofs_FP}

In this appendix, we will give the proof of
Lemma~\ref{lemma:inv_on_upper_plane} (uniqueness and existence of the solution to 
the matrix fixed-point equation $\bK_{\alpha,\nu}(\bS)=\bZ$ on $\bbH_+^k$).

Our proof will be based on a similar argument that appeared in~\cite{asgari2025local}, but for $\bZ = z\bI_k$.
The argument is similar to the one used to prove Lemma~\ref{lemma:K_injective} .

Throughout the appendix we fix 
$(\alpha,\nu, k)$ as in Lemma~\ref{lemma:inv_on_upper_plane}.
\subsection{Preliminary identities}
\label{app:notation_aux}
We begin with some preliminary identities.

\begin{lemma}
\label{lemma:Im_Sinv}
  For $\bS\in\bbH_+^k$ and $\bT\in \sa{k}{\C}$,
     $\bS$ is invertible and satisfies
    \begin{equation}
\label{eq:Im_Sinv}
    \|\bS^{-1}\|_\op \le \|\Im(\bS)^{-1}\|_\op,\quad\quad \Im(\bS^{-1}) = -\bS^{-1}\Im(\bS)\bS^{*-1}\prec\bzero.
    \end{equation}
\end{lemma}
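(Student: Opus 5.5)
Both claims reduce to the Hermitian/anti-Hermitian decomposition $\bS=\bA+i\bB$, with $\bA:=\Re(\bS)$ and $\bB:=\Im(\bS)\succ\bzero$; we use $\Re$, $\Im$ for the Hermitian and skew parts, as the paper does. The plan is to first prove a quadratic-form lower bound, which yields invertibility and the norm estimate, and then to obtain the formula for $\Im(\bS^{-1})$ by a direct algebraic identity.

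\emph{Step 1 (invertibility and norm bound).} For any unit vector $\bv\in\C^k$ we have $\langle \bv,\bS\bv\rangle=\langle\bv,\bA\bv\rangle+i\langle\bv,\bB\bv\rangle$, where both brackets on the right are real. Hence
\begin{equation}
\|\bS\bv\|\ge|\langle\bv,\bS\bv\rangle|\ge\langle\bv,\bB\bv\rangle\ge\lambda_{\min}(\bB)=\|\bB^{-1}\|_\op^{-1}>0 .
\end{equation}
This shows $\ker\bS=\{\bzero\}$, so $\bS$ is invertible. Setting $\bv=\bS^{-1}\bw/\|\bS^{-1}\bw\|$ for an arbitrary $\bw\ne\bzero$ turns the bound into $\|\bS^{-1}\bw\|\le\|\bB^{-1}\|_\op\|\bw\|$. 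Taking the supremum over $\bw$ gives $\|\bS^{-1}\|_\op\le\|\Im(\bS)^{-1}\|_\op$.

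\emph{Step 2 (imaginary part of the inverse).} By definition $\Im(\bM)=(\bM-\bM^*)/(2i)$. Applying this to $\bS^{-1}$ and using the identity $\bS^{-1}-\bS^{*-1}=\bS^{-1}(\bS^*-\bS)\bS^{*-1}$ gives
\begin{equation}
\Im(\bS^{-1})=\frac{1}{2i}\bS^{-1}(\bS^*-\bS)\bS^{*-1}=-\bS^{-1}\Im(\bS)\bS^{*-1}.
\end{equation}
To see that this is strictly negative definite, note that $\bS^{-1}\bB\bS^{*-1}=\bC\bB\bC^*$ with $\bC=\bS^{-1}$ invertible. For $\bv\ne\bzero$ we get $\langle\bv,\bC\bB\bC^*\bv\rangle=\langle\bC^*\bv,\bB\bC^*\bv\rangle>0$, because $\bC^*\bv\ne\bzero$ and $\bB\succ\bzero$. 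Therefore $\Im(\bS^{-1})\prec\bzero$.

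There is no real obstacle here; the only point needing care is the first inequality in Step 1. It relies on the imaginary part of the numerical range being bounded below by $\lambda_{\min}(\bB)$, and this in turn uses that $\langle\bv,\bA\bv\rangle$ is real because $\bA$ is Hermitian. The matrix $\bT$ in the statement plays no role.
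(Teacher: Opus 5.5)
Your proof is correct. Step 2 is exactly the paper's computation of $\Im(\bS^{-1})$, and you additionally justify why $\bS^{-1}\Im(\bS)\bS^{*-1}\succ\bzero$, which the paper only asserts.

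The norm bound is where you take a different route. You use the numerical-range estimate $\|\bS\bv\|\,\|\bv\|\ge|\bv^*\bS\bv|\ge\bv^*\Im(\bS)\bv\ge\lambda_{\min}(\Im(\bS))\|\bv\|^2$, i.e.\ $\bS$ is bounded below by $\lambda_{\min}(\Im(\bS))$. The paper instead factors $\bS=\bB^{1/2}(\bH+i\bI)\bB^{1/2}$, where $\bB=\Im(\bS)$ and $\bH=\bB^{-1/2}\Re(\bS)\bB^{-1/2}$ is Hermitian. It then uses $\|(\bH+i\bI)^{-1}\|_\op\le 1$, which follows from the spectral theorem since $|\lambda+i|\ge 1$ for real $\lambda$. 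This gives $\|\bS^{-1}\|_\op\le\|\bB^{-1/2}\|_\op^2=\|\bB^{-1}\|_\op$.

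Your route is slightly more elementary: it needs no matrix square roots and no diagonalization of $\bH$, and one inequality gives both invertibility and the quantitative bound. The paper's congruence gives the explicit formula $\bS^{-1}=\bB^{-1/2}(\bH+i\bI)^{-1}\bB^{-1/2}$ and reduces everything to a scalar resolvent bound, with invertibility coming free from the factorization. Both arguments are complete, and as you note, $\bT$ plays no role in either.
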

\begin{proof}
The identity  on the left of Eq.~\eqref{eq:Im_Sinv} can be seen by writing
\begin{equation}
    \Im(\bS^{-1}) 
     = \frac{1}{2i} (\bS^{-1}-\bS^{*-1})  
     = \bS^{-1}\Big(\frac{1}{2i} (\bS^{*} - \bS)\Big) \bS^{*-1} = -\bS^{-1}\bB\bS^{*-1}.
\end{equation}

For the operator-norm bound, let $\bA = \Re(\bS)$ and $\bB = \Im(\bS)$. Then we have
\begin{equation*}
\bS = \bA + i\bB
= \bB^{1/2}\bigl(\underbrace{\,\bB^{-1/2}\bA\bB^{-1/2}\,}_{=: \,\bH} + i\bI\bigr)\bB^{1/2}.
\end{equation*}
Since $\bH$ is Hermitian with real spectrum, $\|(\bH+i\bI)^{-1}\|_\op \le 1$, hence 
$\|\bS^{-1}\|_\op \le \|\bB^{-1/2}\|_\op^2 \|(\bH+i\bI)^{-1}\|_\op \le \|\bB^{-1}\|_\op$.
\end{proof}

As a consequence of the above lemma,  we have 
for any $\bS \in \bbH_+^k$ and $\bT\in \sa{k}{\C}$ that 
 $\bI + \bS\bT$ is invertible: 
indeed, $\bI + \bS\bT = \bS(\bS^{-1} + \bT)$, and 
$\Im(\bS^{-1}+\bT) = \Im(\bS^{-1}) = -\bS^{-1}\bB\bS^{*-1}\prec\bzero.$
For such $(\bS,\bT)$, set 
\begin{equation}
\label{eq:bfeta_def_app}
\bfeta(\bS,\bT) := \bT(\bI_k + \bS\bT)^{-1} = (\bI_k + \bT\bS)^{-1}\bT.
\end{equation}
In this notation, the $K$-transform~\eqref{eq:K_transform} and 
the auxiliary matrix $\bF_\bZ(\bS)$ reads
$\bK_{\up{\alpha,\nu}}(\bS) = \int \bfeta(\bS,\bT)\, \nu(\de\bT) - \alpha^{-1}\bS^{-1}.$

We will repeatedly use the following identities. 
\begin{lemma}\label{lemma:eta_identities_app}
Fix $\bS \in \bbH_+^k$, $\bT \in \sa{k}{\C}$ and $\bZ \in \bbH_+^k$, and let 
$\bB := \Im(\bS)$, $\bfeta := \bfeta(\bS,\bT)$. 
Then  the following hold:
\begin{enumerate}
\item $\bfeta$ satisfies
\begin{equation}
\label{eq:Im_eta}
\Im(\bfeta) = -\bfeta\, \bB\, \bfeta^{*},\qquad \|\bfeta\|_\op \le \|\bB^{-1}\|_\op\, .
\end{equation}
\item
If $\bS$ satisfies $\bK_{\up{\alpha,\nu}}(\bS) = \bZ$, then
\begin{align}
\label{eq:imag_part_FP}
&\int \bfeta\,\bB\,\bfeta^*\, \nu(\de\bT) 
= \frac{1}{\alpha}\bS^{-1}\bB\bS^{*-1} - \Im(\bZ),\\
\label{eq:key_identity}
&\bI = \alpha^2\, \bS\int \bfeta\,\bB\,\bfeta^*\, \nu(\de\bT)\, \bS^*\bB^{-1} 
+ \alpha\, \bS\,\Im(\bZ)\,\bS^* \bB^{-1}.
\end{align}
\end{enumerate}
\end{lemma}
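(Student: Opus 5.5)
The plan is to prove the three displays of Lemma~\ref{lemma:eta_identities_app} in order, each by direct algebra from Lemma~\ref{lemma:Im_Sinv}, the fact that $\bT=\bT^*$, and the definition of $\bK_{\up{\alpha,\nu}}$. The one step I do not expect to go through as printed is the coefficient $\alpha^2$ in Eq.~\eqref{eq:key_identity}, discussed in the last paragraph.

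\emph{Item 1.} I will first rewrite
\begin{equation}
\bfeta=\bT(\bI+\bS\bT)^{-1}=\bT\bS^{-1}\cdot\bS(\bI+\bS\bT)^{-1}=\bI-(\bI+\bT\bS)^{-1},
\end{equation}
with the last equality checked by the resolvent identity. This makes $\bfeta$ equal to $\bI$ minus a resolvent-type matrix, and it gives
\begin{equation}
\bfeta-\bfeta^*=(\bI+\bS^*\bT)^{-1}-(\bI+\bT\bS)^{-1}=(\bI+\bT\bS)^{-1}\big(\bT\bS-\bS^*\bT\big)(\bI+\bS^*\bT)^{-1}.
\end{equation}
Next, using $(\bI+\bT\bS)^{-1}\bT=\bfeta$ and $\bT(\bI+\bS^*\bT)^{-1}=\bfeta^*$, I will rewrite the middle factor through $\bT(\bS-\bS^*)\bT$. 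This yields $\bfeta-\bfeta^*=\bfeta(\bS^*-\bS)\bfeta^*=-2i\,\bfeta\bB\bfeta^*$, that is, $\Im(\bfeta)=-\bfeta\bB\bfeta^*$.

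For the norm bound, I will write $\bfeta=(\bS^{-1}+\bT)^{-1}\bS^{-1}\cdot\bS\,\bS^{-1}$. More cleanly, $\bfeta=\bS^{-1}-\bS^{-1}(\bS^{-1}+\bT)^{-1}\bS^{-1}$, and I will mimic the proof of Lemma~\ref{lemma:Im_Sinv}: from $\Im(\bfeta)=-\bfeta\bB\bfeta^*$, for a unit vector $v$,
\begin{equation}
\|\bfeta^*v\|^2\lambda_{\min}(\bB)\le v^*\bfeta\bB\bfeta^*v=-\Im(v^*\bfeta v)\le\|\bfeta^*v\|,
\end{equation}
hence $\|\bfeta\|_\op\le\|\bB^{-1}\|_\op$.

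\emph{Item 2, Eq.~\eqref{eq:imag_part_FP}.} I will take imaginary parts in $\bK(\bS)=\bZ$. By item 1 and Lemma~\ref{lemma:Im_Sinv},
\begin{equation}
\Im\bK(\bS)=-\int\bfeta\bB\bfeta^*\,\nu(\de\bT)+\alpha^{-1}\bS^{-1}\bB\bS^{*-1},
\end{equation}
and rearranging gives the stated identity. Exchanging $\Im$ with the integral is justified by the uniform bound $\|\bfeta\|_\op\le\|\bB^{-1}\|_\op$ together with compact support of $\nu$.

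\emph{Item 2, Eq.~\eqref{eq:key_identity}; expected obstacle.} The natural route is to multiply Eq.~\eqref{eq:imag_part_FP} by $\alpha\bS$ on the left and by $\bS^*\bB^{-1}$ on the right. This gives
\begin{equation}
\bI=\alpha\,\bS\int\bfeta\bB\bfeta^*\,\nu(\de\bT)\,\bS^*\bB^{-1}+\alpha\,\bS\Im(\bZ)\bS^*\bB^{-1}.
\end{equation}
This has coefficient $\alpha$, not $\alpha^2$, on the first term. I therefore expect that the printed $\alpha^2$ \emph{cannot} be derived from Eq.~\eqref{eq:imag_part_FP} for general $\alpha$. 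The obstacle is a normalization mismatch, not a hard estimate: the two forms agree only when $\alpha=1$, or if $\bfeta$ is read as rescaled by $\alpha^{-1/2}$.

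My first move would be to recheck the definition of $\bK$ in Eq.~\eqref{eq:K_transform} for a hidden factor of $\alpha$ in front of the integral. If none is found, I would present the identity with coefficient $\alpha$ and note that the later uses of Eq.~\eqref{eq:key_identity} need only its structure. Those uses are the contraction argument as in Lemma~\ref{lemma:K_injective}, where the map $\bDelta\mapsto\alpha\,\bS\int\bfeta\bDelta\bfeta^*\,\nu(\de\bT)\,\bS^*$ has $\bB$ as a strict super-solution. That structure holds with the coefficient that the algebra actually produces.
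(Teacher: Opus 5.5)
There is a concrete error in your derivation of $\Im(\bfeta)=-\bfeta\bB\bfeta^*$. The identity $\bfeta=\bI-(\bI+\bT\bS)^{-1}$ is false. In fact $\bI-(\bI+\bT\bS)^{-1}=(\bI+\bT\bS)^{-1}\bT\bS=\bfeta\bS$; for $k=1$ this reads $1-1/(1+st)=s\eta$. So the difference $(\bI+\bS^*\bT)^{-1}-(\bI+\bT\bS)^{-1}$ you compute is $\bfeta\bS-\bS^*\bfeta^*$, not $\bfeta-\bfeta^*$. Its middle factor $\bT\bS-\bS^*\bT$ carries $\bT$ on one side only, so it cannot be ``rewritten through $\bT(\bS-\bS^*)\bT$''. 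The expression $\bfeta(\bS^*-\bS)\bfeta^*$ therefore does not follow from what precedes it. The aside $\bfeta=(\bS^{-1}+\bT)^{-1}\bS^{-1}\cdot\bS\bS^{-1}$ is also wrong, since that product is $(\bI+\bS\bT)^{-1}$; your second formula $\bfeta=\bS^{-1}-\bS^{-1}(\bS^{-1}+\bT)^{-1}\bS^{-1}$ is correct but unused. The repair is the direct factoring the paper uses (in mirror-image form). With $\bfeta=(\bI+\bT\bS)^{-1}\bT$ and $\bfeta^*=\bT(\bI+\bS^*\bT)^{-1}$,
\[
\bfeta-\bfeta^*=(\bI+\bT\bS)^{-1}\big[\bT(\bI+\bS^*\bT)-(\bI+\bT\bS)\bT\big](\bI+\bS^*\bT)^{-1}=\bfeta(\bS^*-\bS)\bfeta^*=-2i\,\bfeta\bB\bfeta^*.
\]
Your norm bound rests on this identity, so it only stands once the identity is proved this way.

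With that fixed, the rest is sound. Your norm bound takes a different route from the paper. The paper writes $\bfeta=(\bT^{-1}+\bS)^{-1}$ for invertible $\bT$, applies Lemma~\ref{lemma:Im_Sinv} using $\Im(\bT^{-1}+\bS)=\bB$, and reaches singular $\bT$ by approximation. Your estimate $\lambda_{\min}(\bB)\|\bfeta^*v\|^2\le-\Im(v^*\bfeta v)\le\|\bfeta^*v\|$ handles every self-adjoint $\bT$ at once, with no limiting step. Your derivation of Eq.~\eqref{eq:imag_part_FP} is the paper's.

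Your diagnosis of Eq.~\eqref{eq:key_identity} is correct. The paper's own proof multiplies Eq.~\eqref{eq:imag_part_FP} by $\alpha\bS$ on the left and $\bS^*\bB^{-1}$ on the right, which gives coefficient $\alpha$, not $\alpha^2$, on the first term, so the printed $\alpha^2$ is a typo. For $k=1$ and $\nu=\delta_t$ with $t\neq0$, the two versions together would force $(\alpha^2-\alpha)|s\eta|^2=0$, which fails for $\alpha\neq1$. The only downstream use is Lemma~\ref{lemma:Gamma_at_FP}. It works directly from Eq.~\eqref{eq:imag_part_FP} and obtains $\bU=\alpha\bI-\alpha^2\bH$, consistent with the corrected coefficient, so nothing later depends on the typo.
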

\begin{proof}

To show Item \textit{1},
note that $\bfeta(\bS,\bT)^* = (\bI + \bT\bS^*)^{-1}\bT$, hence
\begin{align*}
\bfeta - \bfeta^*
&= (\bI + \bT\bS^*)^{-1}\bigl[(\bI+\bT\bS^*)\bT - \bT(\bI+\bS\bT)\bigr](\bI+\bS\bT)^{-1} \\
&= (\bI+\bT\bS^*)^{-1}\bT(\bS^*-\bS)\bT(\bI+\bS\bT)^{-1}
= -2i\, \bfeta^*\bB\bfeta.
\end{align*}
Dividing by $2i$ yields the first relation in Eq.~\eqref{eq:Im_eta}.
For the second relation, note that if $\bT$ is invertible, then we have
$\bfeta = (\bT^{-1}+\bS)^{-1}$ and the second identity of Eq.~\eqref{eq:Im_eta} follows 
by the bound in Lemma~\ref{lemma:Im_Sinv}. 
Meanwhile, the general non-invertible case follows by approximation.

To show Item \textit{2},
take imaginary parts in
\begin{equation*}
\bZ = \int\bfeta(\bS,\bT)\,\nu(\de\bT) - \frac{1}{\alpha}\,\bS^{-1}
\end{equation*}
and use Eq.~\eqref{eq:Im_Sinv} and Eq.~\eqref{eq:Im_eta} to get Eq.~\eqref{eq:imag_part_FP}.
Multiply~\eqref{eq:imag_part_FP} by $\alpha\bS$ on the left and $\bS^*\bB^{-1}$ on the right, then rearrange to obtain~\eqref{eq:key_identity}.
\end{proof}

\subsection{Uniqueness via convergence of a Neumann series}
\label{app:uniqueness}
Uniqueness of the solution $\bS\in\bbH_+^k$ to $\bK_{\up{\alpha,\nu}}(\bS) = \bZ$
will be proved by
an approach similar to the one used to prove Lemma~\ref{lemma:K_injective}. 

For convenience, we define for $\bZ,\bS\in\bbH_+^k$ 
\begin{align}
\label{eq:bF_def}
\bF_\bZ(\bS) &:= \Big(\int \bfeta(\bS,\bT)\, \nu(\de\bT) - \bZ\Big)^{-1}.
\end{align}
Note the matrix fixed-point equation $\bK_{\up{\alpha,\nu}}(\bS)=\bZ$ is 
equivalent to $\bF_\bZ(\bS) = \alpha\bS$.

Given $\bS, \tilde\bS,\bZ\in\bbH_+^k$, 
$\bF_\bZ(\bS)$ and $\bF_\bZ(\tilde\bS)$ are well-defined since by Lemma~\ref{lemma:eta_identities_app},
$\Im(\bfeta - \bZ ) \prec\bzero$.
 We define the (linear) operator
$\cuL_{\bS,\tilde\bS}:\C^{k\times k}\to \C^{k\times k}$ by
\begin{equation}
\label{eq:T_op_def}
\cuL_{\bS,\tilde\bS}[\bDelta] := \bF_\bZ(\bS) \int \bfeta(\bS, \bT)\, \bDelta\, \bfeta(\tilde\bS, \bT)\, \nu(\de\bT)\, \bF_\bZ(\tilde\bS),
\end{equation}
and write $\cuL_{\bS} := \cuL_{\bS,\bS}$.
Note that we suppress the dependence on $\bZ$ in the notation for convenience.

This operator satisfies the following consequence of the resolvent identity:
\begin{equation}
\label{eq:F_diff}
\bF_\bZ(\bS) - \bF_\bZ(\tilde\bS) = \cuL_{\bS,\tilde\bS}[\bS - \tilde\bS]\, .
\end{equation}
In particular, if $\bS$ and $\tilde\bS$ are both solutions 
of $\bK_{\up{\alpha,\nu}}(\bS) = \bZ$ in $\bbH_+^k$, then 

\begin{equation}
\label{eq:two_sols}
(\alpha\, \id - \cuL_{\bS,\tilde\bS})[\bS -\tilde\bS] = \bzero.
\end{equation}
Hence, uniqueness of the solution in $\bbH_+^k$ will follow 
from the invertibility of $(\alpha\,\id - \cuL_{\bS,\tilde\bS})$ on $\C^{k\times k}$, 
which in turn follows from convergence of the Neumann series 
$\sum_{q\ge 0}\alpha^{-q-1}\cuL_{\bS,\tilde\bS}^q$. The next lemma provides a bound on the 
operator norm of $\cuL_{\bS,\tilde\bS}^q$ in terms of a generic positive-definite matrix
$\bB$; specializing $\bB = \Im(\bS)$, $\tilde\bB = \Im(\tilde\bS)$ will give the bound 
at fixed-point solutions necessary to prove convergence of the Neumann series.

\begin{lemma}[Operator-norm bound via Cauchy--Schwarz]\label{lemma:T_op_norm_bound}
Fix $\bZ\in\bbH_+^k$ and $\bS,\tilde\bS\in\bbH_+^k$ such that $\bF_\bZ(\bS),\bF_\bZ(\tilde\bS)$ 
are well-defined, and let $\bB,\tilde\bB\succ\bzero$.
For all integers $q\ge 1$,
\begin{equation}
\label{eq:T_op_norm_bound}
\|\cuL_{\bS,\tilde\bS}^q\|_{\op\to\op}^2 
\le \|\bB\|_\op\|\bB^{-1}\|_\op\|\tilde\bB\|_\op\|\tilde\bB^{-1}\|_\op\cdot \Gamma_\bS(\bB)^q\,\Gamma_{\tilde\bS}(\tilde\bB)^q, 
\end{equation}
where 
\begin{equation}
\label{eq:Gamma_def}
\Gamma_\bS(\bB) := \Big\|\bB^{-1/2}\bF_\bZ(\bS)\int \bfeta(\bS,\bT)\,\bB\,\bfeta(\bS,\bT)^*\nu(\de\bT)\, \bF_\bZ(\bS)^* \bB^{-1/2}\Big\|_\op.
\end{equation}
Here $\|\cdot\|_{\op\to\op}$ is the operator norm on linear maps 
$(\C^{k\times k},\|\cdot\|_\op) \to (\C^{k\times k},\|\cdot\|_\op)$.
\end{lemma}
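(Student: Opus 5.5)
We mirror the argument used for $\|\cuT_\bS^p\|_{\fro\to\fro}$ in Lemma~\ref{lemma:set_characterization} and for $\cuG_0^p$ in Lemma~\ref{lemma:K_injective}: expand $\cuL^q_{\bS,\tilde\bS}$ as an expectation over i.i.d.\ copies, separate the $\bS$-factors from the $\tilde\bS$-factors by Cauchy--Schwarz, and peel off one $\bB$-conjugated factor at a time.

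\emph{Step 1 (expansion).} With $\bT_1,\dots,\bT_q$ i.i.d.\ from $\nu$, set $\bL_j:=\bF_\bZ(\bS)\bfeta(\bS,\bT_j)$ and $\bR_j:=\bfeta(\tilde\bS,\bT_j)\bF_\bZ(\tilde\bS)$. By linearity,
\begin{equation}
\cuL^q_{\bS,\tilde\bS}[\bDelta]=\E\big[\bL_q\cdots\bL_1\,\bDelta\,\bR_1\cdots\bR_q\big].
\end{equation}

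\emph{Step 2 (duality and Cauchy--Schwarz).} For unit vectors $x,y\in\C^k$, write
\begin{equation}
x^*\cuL^q[\bDelta]y=\E\big[(\bL_1^*\cdots\bL_q^*x)^*\,\bDelta\,(\bR_1\cdots\bR_q y)\big].
\end{equation}
Since $\|\bDelta\|_\op$ bounds the middle factor, Cauchy--Schwarz in the expectation gives
\begin{equation}
|x^*\cuL^q[\bDelta]y|\le\|\bDelta\|_\op\,\E\big[\|\bL_1^*\cdots\bL_q^*x\|^2\big]^{1/2}\,\E\big[\|\bR_1\cdots\bR_qy\|^2\big]^{1/2}.
\end{equation}

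\emph{Step 3 (peeling).} For the $\bL$-factor, insert $\bI\preceq\|\bB^{-1}\|_\op\bB$ at the innermost position:
\begin{equation}
\E\big[x^*\bL_q\cdots\bL_1\bL_1^*\cdots\bL_q^*x\big]\le\|\bB^{-1}\|_\op\,\E\big[x^*\bL_q\cdots\bL_2\,(\bL_1\bB\bL_1^*)\,\bL_2^*\cdots\bL_q^*x\big].
\end{equation}
By independence, replace $\bL_1\bB\bL_1^*$ by its expectation $\E[\bL_1\bB\bL_1^*]=\bF_\bZ(\bS)\int\bfeta\bB\bfeta^*\,\nu(\de\bT)\,\bF_\bZ(\bS)^*$. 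This satisfies
\begin{equation}
\E[\bL_1\bB\bL_1^*]=\bB^{1/2}\big(\bB^{-1/2}\E[\bL_1\bB\bL_1^*]\bB^{-1/2}\big)\bB^{1/2}\preceq\Gamma_\bS(\bB)\,\bB,
\end{equation}
using only the definition \eqref{eq:Gamma_def}. Iterating $q$ times and finishing with $x^*\bB x\le\|\bB\|_\op$ gives
\begin{equation}
\E\big[\|\bL_1^*\cdots\bL_q^*x\|^2\big]\le\|\bB^{-1}\|_\op\|\bB\|_\op\,\Gamma_\bS(\bB)^q.
\end{equation}

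\emph{The $\tilde\bS$ side.} Here the ordering is reversed: we need $\E[\bR_j^*\tilde\bB\bR_j]=\bF_\bZ(\tilde\bS)^*\int\bfeta(\tilde\bS,\bT)^*\tilde\bB\bfeta(\tilde\bS,\bT)\,\nu(\de\bT)\,\bF_\bZ(\tilde\bS)$, while $\Gamma_{\tilde\bS}$ is defined with $\bfeta\tilde\bB\bfeta^*$ and $\bF\cdot\bF^*$.

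\emph{Main obstacle.} The difficulty is matching these two orientations. I would resolve it using $\bfeta(\tilde\bS,\bT)^*=\bfeta(\tilde\bS^*,\bT)$ and $\bF_\bZ(\tilde\bS)^*=\bF_{\bZ^*}(\tilde\bS^*)$, i.e.\ by bounding the adjoint chain via the transposed form of the same peeling argument. Failing that, I would apply Cauchy--Schwarz the other way round (bounding $x^*\bL$ and $\bR y$ with the roles of rows and columns swapped) and check that the operator-norm (rather than Frobenius) setting allows this symmetric choice, since $\|\cuL^q\|_{\op\to\op}$ equals the norm of its adjoint up to the rearrangement needed.

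\emph{Conclusion.} Taking the supremum over $x$, $y$, and $\|\bDelta\|_\op\le1$ and squaring yields \eqref{eq:T_op_norm_bound}.
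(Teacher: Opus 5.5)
Your Steps 1--3 are correct and are the Cauchy--Schwarz/peeling argument the paper intends. In particular, your bound $\|\bB^{-1}\|_\op\|\bB\|_\op\Gamma_\bS(\bB)^q$ on the $\bS$-side factor is fine. The gap is the $\tilde\bS$ side, which you flag but do not close, and it cannot be closed. With $\bR_j=\bfeta(\tilde\bS,\bT_j)\bF_\bZ(\tilde\bS)$, the right factor is $\E[y^*\bR_q^*\cdots\bR_1^*\bR_1\cdots\bR_q y]$. Peeling it therefore produces $\bF_\bZ(\tilde\bS)^*\int\bfeta(\tilde\bS,\bT)^*\tilde\bB\,\bfeta(\tilde\bS,\bT)\,\nu(\de\bT)\,\bF_\bZ(\tilde\bS)$, not the matrix in $\Gamma_{\tilde\bS}(\tilde\bB)$. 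Your two remedies do not help:
\begin{itemize}
\item The relabeling $\bfeta(\tilde\bS,\bT)^*=\bfeta(\tilde\bS^*,\bT)$, $\bF_\bZ(\tilde\bS)^*=\bF_{\bZ^*}(\tilde\bS^*)$ only identifies this matrix with the one in $\Gamma$ evaluated at $(\bZ^*,\tilde\bS^*)$, which is a different quantity.
\item Replacing $\bDelta$ by its adjoint or transpose, or swapping roles in Cauchy--Schwarz, again puts the $\tilde\bS$-factors in the form $\bR_j^*(\cdot)\bR_j$.
\end{itemize}

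In fact the inequality as stated is false for general $\tilde\bB$. Take $k=2$, $\nu=\delta_{\bI}$, $\tilde\bS=\bS$, $\bB=\tilde\bB=\bI$, $q=1$, and write $\bfeta=\bfeta(\bS,\bI)$, $\bF=\bF_\bZ(\bS)$. Then $\cuL[\bDelta]=\bF\bfeta\bDelta\bfeta\bF$, so $\|\cuL\|_{\op\to\op}=\|\bF\bfeta\|_\op\|\bfeta\bF\|_\op$. On the other side, $\Gamma_\bS(\bI)=\Gamma_{\tilde\bS}(\bI)=\|\bF\bfeta\|_\op^2$. The claim therefore reduces to $\|\bfeta\bF\|_\op\le\|\bF\bfeta\|_\op$.

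Now fix $\epsilon\in(0,\tfrac12)$ and set $\bfeta=\mathrm{diag}(1-i\epsilon,-i\epsilon)$, i.e.\ $\bS=\bfeta^{-1}-\bI$. Then $\Im\bS=\mathrm{diag}(\epsilon/(1+\epsilon^2),1/\epsilon)\succ\bzero$. Take $\bZ=\begin{pmatrix} i(1-\epsilon)&1\\0&-1+i(1-\epsilon)\end{pmatrix}$, so $\Im\bZ$ has eigenvalues $1-\epsilon\pm\tfrac12>0$. Then $\bF=(\bfeta-\bZ)^{-1}=\tfrac12\begin{pmatrix}1+i&i\\0&1+i\end{pmatrix}$. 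As $\epsilon\downarrow0$, $\|\bfeta\bF\|_\op\to\sqrt3/2$ but $\|\bF\bfeta\|_\op\to1/\sqrt2$, which violates the reduced claim.

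What your argument does prove is the bound with $\Gamma_{\tilde\bS}(\tilde\bB)$ replaced by
$\Gamma'_{\tilde\bS}(\tilde\bB):=\|\tilde\bB^{-1/2}\bF_\bZ(\tilde\bS)^*\int\bfeta(\tilde\bS,\bT)^*\tilde\bB\,\bfeta(\tilde\bS,\bT)\,\nu(\de\bT)\,\bF_\bZ(\tilde\bS)\tilde\bB^{-1/2}\|_\op$. That corrected bound is all the uniqueness proof needs. At a solution with $\tilde\bB=\Im(\tilde\bS)$ you have three facts:
\begin{itemize}
\item $\bfeta^*\tilde\bB\bfeta=-\Im(\bfeta)$, which is literally what the computation in the proof of Lemma~\ref{lemma:eta_identities_app} gives;
\item $\Im(\tilde\bS^{-1})=-\tilde\bS^{*-1}\tilde\bB\tilde\bS^{-1}$;
\item $\bF_\bZ(\tilde\bS)=\alpha\tilde\bS$.
\end{itemize}
Hence the matrix inside $\Gamma'$ equals $\alpha\bI-\alpha^2\tilde\bB^{-1/2}\tilde\bS^*\Im(\bZ)\tilde\bS\tilde\bB^{-1/2}\succeq\bzero$, and $\Gamma'_{\tilde\bS}(\tilde\bB)<\alpha$ exactly as in Lemma~\ref{lemma:Gamma_at_FP}.

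The paper's own sketch skips the same point. Its term (II) is built from $\bF_\bZ(\tilde\bS)^*\tilde\bfeta_i^*\cdots\tilde\bfeta_i\bF_\bZ(\tilde\bS)$, yet the conclusion is stated with $\Gamma_{\tilde\bS}(\tilde\bB)$. The right repair is to change the orientation of the second factor in the statement, not to look for a cleverer proof.
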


\begin{proof}
Let $\bT_1,\dots,\bT_q,\tilde\bT_1,\dots,\tilde\bT_q$ be i.i.d.\ copies of $\bT\sim\nu$, 
and write $\bfeta_i := \bfeta(\bS,\bT_i)$, 
$\tilde\bfeta_i := \bfeta(\tilde\bS,\bT_i)$. By independence and the definition of $\cuL_{\bS,\tilde\bS}$, 
\begin{align}
\cuL_{\bS,\tilde\bS}^q[\bDelta]
 = \E\bigg[\prod_{i=q}^1\big(\bF_\bZ(\bS)\bfeta_i\big)\,\bDelta\,\prod_{i=1}^q\big(\tilde\bfeta_i\bF_\bZ(\tilde\bS)\big)\bigg]\, .
\end{align}
Fix unit vectors $\bu, \bv \in\C^k$ and $\bDelta\in\C^{k\times k}$. 
We have
by an argument similar to the one in Lemma~\ref{lemma:K_injective},
\begin{equation}
|\bv^*\cuL_{\bS,\tilde\bS}^q[\bDelta]\bu|^2 \le \mathrm{(I)}^2\cdot\mathrm{(II)}^2,
\end{equation}
where
\begin{align}
\mathrm{(I)} &= \E\bigg[\Tr\bigg(
\Big|
\bDelta^* \prod_{i=1}^q(\bfeta_i^*\bF_\bZ(\bS)^*)\,\bv\bv^*\,\prod_{i=q}^1(\bF_\bZ(\bS)\tilde\bfeta_i)
\Big|^2 \bigg)\bigg],\\
\mathrm{(II)} &= \E\bigg[\Tr\bigg(
\Big|
\bDelta \prod_{i=1}^q(\tilde\bfeta_i\bF_\bZ(\tilde\bS))\,\bu\bu^*\,\prod_{i=q}^1(\bF_\bZ(\tilde\bS)^*\bfeta_i^*)
\Big|^2 \bigg)\bigg].
\end{align}
We bound (II); the argument for (I) is identical.

By the cyclicity of the trace, 
\begin{align}
\mathrm{(II)} &= 
\E\bigg[\bu^*\prod_{i=q}^1\big(\bF_\bZ(\tilde\bS)^*\tilde\bfeta_i^*\big)\bDelta^*\bDelta
\prod_{i=1}^q\big(\tilde\bfeta_i\bF_\bZ(\tilde\bS)\big)\bu\bigg] 
\cdot
\E\bigg[
\bu^*\prod_{i=q}^1\big(\bF_\bZ(\tilde\bS)^*\bfeta_i^*\big)\,
\prod_{i=1}^q\big(\bF_\bZ(\tilde\bS)\bfeta_i\big)\,
\bu\bigg].
\end{align}
We now
iteratively use that for PSD matrix $\bH$, we have $\bw^*\bH\bw \le \|\bH\|_\op \bw^*\bw$, and
 insert $\bB^{1/2}\bB^{-1}\bB^{1/2} = \bI$ into the expression to extract the desired 
powers of the term $\Gamma_{\tilde\bS}(\tilde\bB).$
 Iterating $q$ for each of the two expectations in $\mathrm{(II)}$ yields
\begin{equation}
\mathrm{(II)} \le
\Gamma_{\tilde\bS}(\tilde\bB)^{2q}
\|\tilde\bB^{-1}\|_\op^2 \|\tilde\bB\|_\op^2\|\bDelta\|_\op^2,
\end{equation}
The same argument bounds $\mathrm{(I)} \le \Gamma_{\bS}(\bB)^{2q}\|\bB^{-1}\|_\op^2\|\bB\|_\op^2\|\bDelta\|_\op^2$, 
and combining the two bounds and taking supremum over $\|\bDelta\|_\op\le 1$ proves Eq.~\eqref{eq:T_op_norm_bound}.
\end{proof}

The next lemma controls $\Gamma_\bS(\bB)$ at fixed-point solutions. 
The key observation is that for $\bS$ solving $\bF_\bZ(\bS) = \alpha\bS$, the matrix
inside the operator norm in Eq.~\eqref{eq:Gamma_def} has an explicit form via 
the imaginary-part identity~\eqref{eq:imag_part_FP}.

\begin{lemma}[Operator-norm bound at fixed-point solutions]\label{lemma:Gamma_at_FP}
Fix $\bZ\in\bbH_+^k$ and let $\bS\in\bbH_+^k$ be a solution of $\bK_{\alpha,\nu}(\bS) = \bZ$. 
With $\bB := \Im(\bS)$, we have the bound
\begin{equation}
\label{eq:Gamma_FP_bound}
\Gamma_\bS(\bB) \le \alpha- \alpha^2\lambda_{\min}\big(\bB^{-1/2}\bS\,\Im(\bZ)\, \bS^*\bB^{-1/2}\big)\, .
\end{equation}
In particular, 
$$\frac{1}{\alpha}\Gamma_\bS(\bB) < 1$$. 
\end{lemma}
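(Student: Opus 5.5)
At a fixed point, $\bK_{\alpha,\nu}(\bS)=\bZ$ is the same as $\bF_\bZ(\bS)=\alpha\bS$, and hence $\bF_\bZ(\bS)^*=\alpha\bS^*$. The plan is to substitute this into the definition \eqref{eq:Gamma_def} of $\Gamma_\bS(\bB)$. This gives
\begin{equation}
\Gamma_\bS(\bB) = \alpha^2\Big\|\bB^{-1/2}\bS\int \bfeta\,\bB\,\bfeta^*\,\nu(\de\bT)\,\bS^*\bB^{-1/2}\Big\|_\op .
\end{equation}

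Next I will use the imaginary-part identity \eqref{eq:imag_part_FP} of Lemma~\ref{lemma:eta_identities_app}, which says $\int\bfeta\bB\bfeta^*\,\nu(\de\bT)=\alpha^{-1}\bS^{-1}\bB\bS^{*-1}-\Im(\bZ)$. Conjugating this by $\bB^{-1/2}\bS$ on the left and by $\bS^*\bB^{-1/2}$ on the right turns the matrix inside the norm into
\begin{equation}
\bM := \alpha^2\Big(\tfrac1\alpha\bI - \bB^{-1/2}\bS\,\Im(\bZ)\,\bS^*\bB^{-1/2}\Big) = \alpha\bI - \alpha^2\bN,\qquad \bN:=\bB^{-1/2}\bS\,\Im(\bZ)\,\bS^*\bB^{-1/2}.
\end{equation}
This is just \eqref{eq:key_identity} written in symmetric form.

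The matrix $\bM$ is a congruence of $\int\bfeta\bB\bfeta^*\,\nu(\de\bT)$, and that integrand is PSD, so $\bM\succeq\bzero$. For a PSD matrix the operator norm equals the largest eigenvalue, so
\begin{equation}
\Gamma_\bS(\bB)=\lambda_{\max}(\alpha\bI-\alpha^2\bN)=\alpha-\alpha^2\lambda_{\min}(\bN),
\end{equation}
which yields \eqref{eq:Gamma_FP_bound}, in fact with equality. For the strict inequality, note that $\Im(\bZ)\succ\bzero$ and that $\bS$ is invertible by Lemma~\ref{lemma:Im_Sinv}. Therefore $\bB^{-1/2}\bS$ is invertible, $\bN$ is a congruence of a positive definite matrix, and $\lambda_{\min}(\bN)>0$. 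Dividing by $\alpha$ gives $\alpha^{-1}\Gamma_\bS(\bB)=1-\alpha\lambda_{\min}(\bN)<1$.

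Every step is algebraic. The only point needing care is the positivity $\bM\succeq\bzero$, since without it the operator norm need not equal the top eigenvalue of $\alpha\bI-\alpha^2\bN$. This positivity comes directly from the integral representation of $\bM$.
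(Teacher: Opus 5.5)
Your proposal is correct and is essentially the paper's proof. You substitute $\bF_\bZ(\bS)=\alpha\bS$ into $\Gamma_\bS(\bB)$, rewrite the inner matrix via \eqref{eq:imag_part_FP} as $\alpha\bI-\alpha^2\bB^{-1/2}\bS\,\Im(\bZ)\,\bS^*\bB^{-1/2}$, use its positive semidefiniteness, and get strictness from $\Im(\bZ)\succ\bzero$ and the invertibility of $\bS$. Your observation that the bound in fact holds with equality is also right, and you make explicit the step the paper leaves implicit (the operator norm of a PSD matrix equals its top eigenvalue).
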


\begin{proof}
By the fixed-point equation $\bF_\bZ(\bS) = \alpha\bS$, the matrix inside the operator 
norm in Eq.~\eqref{eq:Gamma_def} is
\begin{equation}
\bU := \alpha^2\bB^{-1/2}\bS\int\bfeta(\bS,\bT)\bB\bfeta(\bS,\bT)^*\,\nu(\de\bT)\,\bS^*\bB^{-1/2}\, .
\end{equation}
On the one hand, $\bU\succeq \bzero$.
 On the other hand, conjugating the 
imaginary-part identity Eq.~\eqref{eq:imag_part_FP} by $\alpha\bS$  then by $B^{-1/2}$,
\begin{equation}
\bU = \alpha\bI -\alpha^2 \bH, \quad\quad \bH := \bB^{-1/2}\bS\,\Im(\bZ)\,\bS^*\bB^{-1/2}\, .
\end{equation}
Since $\Im(\bZ)\succ\bzero$ and $\bS$ is invertible, the claim follows.
\end{proof}

\begin{proof}[Proof of Lemma~\ref{lemma:inv_on_upper_plane}]
That $\hat S_{\alpha,\nu}(\bZ)$ of Eq.~\eqref{eq:free_ST_def} satisfies 
the matrix fixed-point equation $\bK_{\alpha,\nu}(\bS)=\bZ$ on $\bbH_+^k$ can be verified via several established approaches 
(see remark~\ref{rmk:PropoProof}).
For example, following the approach of~\cite{asgari2025local}, one can show that 
the empirical Stieltjes transform $\hat\bS_n(\bZ) = n^{-1}(\bI_k\otimes \Tr)\bR(\bZ)$ of Eq.~\eqref{eq:empirical_ST}
satisfies the fixed-point equation in an approximate sense, meaning that $\bK_{\alpha,\nu}(\hat\bS_n(\bZ)) =  \bZ + \bE_n$
for some error term whose norm $\|\bE_n\|_\op$ converges to 0 in probability as $n\to\infty$, 
and then noting that $\hat\bS_{\alpha,\nu}(\bZ)$ is the probability limit of $\hat\bS_n(\bZ)$ allows one to conclude 
that $\hat\bS_{\alpha,\nu}(\bZ)$ satisfies the fixed-point equation.

For uniqueness, let $\bS,\tilde\bS\in\bbH_+^k$ be two solutions. 
Then Lemma~\ref{lemma:T_op_norm_bound} applied with $\bB = \Im(\bS), \tilde\bB = \Im(\tilde\bS)$ 
yields
\begin{equation}
\|(\alpha^{-1}\cuL_{\bS,\tilde\bS})^q\|_{\op\to\op}^{2} 
\le \|\bB\|_\op\|\bB^{-1}\|_\op \|\tilde\bB\|_\op\|\tilde\bB^{-1}\|_\op \cdot \big(\alpha^{-1}\Gamma_\bS(\bB)\big)^q\big(\alpha^{-1}\Gamma_{\tilde\bS}(\tilde\bB)\big)^q\, .
\end{equation}
Since $\alpha^{-1}\Gamma_\bS(\bB),\alpha^{-1}\Gamma_{\tilde\bS}(\tilde\bB) <1$ by 
Lemma~\ref{lemma:Gamma_at_FP}, 
the
Neumann series $\sum_{q\ge 0}\alpha^{-q}\cuL_{\bS,\tilde\bS}^q$
converges absolutely, 
so that $\id - \alpha^{-1}\cuL_{\bS,\tilde\bS}$ is invertible.
Recalling Eq.~\eqref{eq:two_sols} asserting 
$(\alpha\,\id - \cuL_{\bS,\tilde\bS})[\bS - \tilde\bS] = \bzero$, 
we conclude that $\bS = \tilde\bS$.
\end{proof}

\end{document}